\declaretheorem[numberwithin=section]{theorem}
\newtheorem{lemma}[theorem]{Lemma}
\newtheorem{question}[theorem]{Question}
\newtheorem{definition}[theorem]{Definition}
\newtheorem{example}[theorem]{Example}
\newtheorem{notation}[theorem]{Notation}
\newtheorem{proposition}[theorem]{Proposition}
\newtheorem{corollary}[theorem]{Corollary}
\newtheorem{remark}[theorem]{Remark}
\newcommand{\Z}{\mathbb Z}
\newcommand{\N}{\mathbb N}
\newcommand{\notationidx}[2]{#1\index{#1@#1 --- #2|hyperpage}}
\title{On the cohomology of homshifts}
\author[1]{\normalsize Nishant Chandgotia} 
\affil[1]{ \small
	Tata Institute of Fundamental Research - Centre for Applicable Mathematics
    \url{nishant.chandgotia@gmail.com}
	}
\author[3]{\normalsize Silvère Gangloff}
\author[2]{\normalsize Benjamin Hellouin de Menibus}
\affil[2]{
Laboratoire Interdisciplinaire des Sciences du Numérique, Université Paris-Saclay
\url{hellouin@lisn.fr}}
\author[3]{Piotr Oprocha}
\affil[3]{
National Supercomputing Centre IT4Innovations, IRAFM, University of Ostrava,
	30. dubna 22, 70103 Ostrava,
	Czech Republic\\
    \url{silvere.gangloff@osu.cz}\\
    \url{piotr.oprocha@osu.cz}
	}
\date{\today}
\begin{document}

\maketitle
\begin{abstract}
  We study the cohomology of symbolic dynamical systems called homshifts: they are the nearest-neighbour $\Z^d$ shifts of finite type whose adjacency rules are the same in every direction. Building on the work of Klaus Schmidt (Pacific J. Math.170(1995), no.1, 237–269) we give a necessary and sufficient condition for homshifts to be cohomological trivial. This condition is expressed in terms of the topology of a natural simplicial complex arising from the shift space which can be analyzed in many natural cases. However, we prove that in general, cohomological triviality is algorithmically undecidable for homshifts.
\end{abstract}
\noindent \textbf{Keywords}: \emph{multidimensional shifts of finite type, homshifts, cocycles, undecidability, finitely presented groups, topology of finite graphs}. \bigskip 

\noindent \textbf{Mathematics Subject Classification}. \emph{Primary : 37B51 (Multidimensional shifts of finite type); Secondary :  05C25 (Graphs and groups), 37A20 (Orbit equivalence, cocycles, ergodic equivalence relations)} 
 { \hypersetup{linkcolor=black} \tableofcontents}

\section{Introduction}

The study of symbolic dynamics has long been intertwined with questions in geometry, group theory, and statistical physics. In the early 1990s, tiling problems played a particularly influential role in shaping these connections. Conway and Lagarias \cite{MR1041445} introduced group-theoretic invariants to study whether a given finite set of polygonal shapes could tile regions of the plane, and Thurston \cite{MR1072815} further developed these ideas, popularizing what are now referred to as the Conway–Lagarias–Thurston tiling groups. In parallel, Katok and Spatzier \cite{MR1307298} investigated cohomological rigidity phenomena for higher-rank abelian group actions, showing how cohomological triviality captures fundamental dynamical constraints.

Klaus Schmidt \cite{Schmidt95} later recognized that the groups introduced by Conway and Lagarias could be interpreted in terms of cocycles on certain 
$\mathbb Z^2$-subshifts of finite type. This provided a striking link: the algebraic structure underlying tiling groups reappears naturally in the cohomology of shift spaces, and rigidity phenomena studied by Katok and Spatzier in smooth dynamics find discrete analogues in symbolic dynamics. In subsequent work, Schmidt \cite{Schmidt95} and Einsiedler \cite{MR1836431} initiated a systematic study of cocycles and cohomology for higher-dimensional subshifts of finite type. A central motivating question in this line of work is whether cohomology can be used to decide extension problems: given a configuration defined outside a finite region, can it be extended inwards to a global configuration of the subshift?

In the particular case of lozenge tilings and dimer tilings, Thurston gave a necessary and sufficient condition for a region to be tiled \cite{MR1072815}. This condition lends to a very general condition which relates the geometry of the tiling group to a nice necessary and sufficient condition via the so-called Kirszbraun/McShane Whitney-type theorem \cite{tassy2014tiling, chandgotia2019kirszbraun}. 
On the other hand, the same tools also seems to play an important role in questions about large scale statistical physics phenomena such as what does a uniformly sampled tiling look like \cite{tassy2014tiling,menz2020variational, zbMATH07803200}.

In this work, we define analogous groups for a particular class of statistical physics models arising from graph homomorphisms on the cubical lattice called homshifts. Given a finite connected undirected graph $G$, the $d$-dimensional homshift $X^d_G$ is defined as the space of graph homomorphisms from the Cayley graph of $\Z^d$
 (with respect to its standard generators) into 
$G$. These $d$-dimensional homshifts, for $d>1$, form a rich subclass of $\Z^d$-shifts of finite type. Their definition is simple and highly symmetric and many meaningful properties are algorithmically decidable: for instance, non-emptiness is decidable and entropy is algorithmically computable \cite{MR1428636}. This contrasts with shifts of finite type where the same problems are undecidable \cite{MR216954, MR2680402}. As a matter of fact, it is the case for most nontrivial properties \cite{carrasco2025rice} {for general shifts of finite type}. Furthermore, they encompass many models of interest in statistical physics, such as the hard-core model, proper colorings, and the iceberg model, and some finer properties still exhibit undecidability phenomena \cite{gao2018continuous, CGHO25}. They provide a particularly fertile ground for exploring the boundary between tractable and intractable questions in symbolic dynamics.

The main result of the present paper relates the cohomology of homshifts with the even square group, which is an index $2$ subgroup of the square group introduced in \cite{CGHO25,gao2018continuous}. 
\begin{theorem}\label{theorem: cohomological triviality}
    For all $d > 1$ and for any graph $G$ such that $X^d_G$ is topologically mixing, $X^d_G$ is cohomologically trivial if and only if  the square group is isomorphic to $\Z/2\Z$ if and only if the even square group of $G$ is trivial.
\end{theorem}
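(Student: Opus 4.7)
The strategy is to invoke Schmidt's cohomological framework \cite{Schmidt95} to reduce the cohomology of $X^d_G$ to group-theoretic data captured by the square group. First, I would argue that any continuous cocycle $c : X^d_G \times \Z^d \to H$ with target an arbitrary group $H$ admits, up to a coboundary, a representative whose value on the $i$-th generator $e_i$ depends only on the edge $(x_0, x_{e_i})$ of $G$. This yields a function $\phi$ on oriented edges, and the cocycle identity applied to each $2 \times 2$ window produces exactly the defining relations of the square group, namely $\phi(a,b)\phi(b,c) = \phi(a,d)\phi(d,c)$ for each allowed $4$-cycle $abcd$. Thus continuous cocycles, modulo a controlled class of ``vertex coboundaries'', are in bijection with group homomorphisms from the square group to $H$.

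Next, I would exploit the natural parity homomorphism $p$ from the square group to $\Z/2\Z$ sending each edge generator to $1$, which is well-defined precisely because every defining square relation equates two paths of length two. Its kernel is by definition the even square group. Under the mixing hypothesis $G$ is non-bipartite, so $p$ is surjective and the even square group is a proper index-$2$ subgroup. The crucial point is to show that the cocycle associated to $p$ is itself a continuous coboundary: using mixing and an odd cycle in $G$, one constructs a continuous function $g : X^d_G \to \Z/2\Z$ whose coboundary realizes $p$, so that parity-type cocycles contribute nothing to cohomology.

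Combining these two reductions, the continuous cohomology of $X^d_G$ with arbitrary coefficients is governed exactly by homomorphisms out of the even square group. Consequently $X^d_G$ is cohomologically trivial if and only if the even square group admits no nontrivial homomorphism to any group, that is, the even square group is trivial; and under the surjectivity of $p$, this is in turn equivalent to the square group being isomorphic to $\Z/2\Z$. This establishes the three-way equivalence asserted by the theorem.

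The main obstacle, as I see it, is the reduction in the first step to edge-local representatives. It requires exploiting the isotropy of homshifts (the same local rule in every direction) together with topological mixing to show that any continuous cocycle is cohomologous to one whose range of dependence on each generator shrinks to a single edge; without mixing and $d > 1$ the higher-block representatives cannot in general be collapsed. A secondary delicate point is the construction of the continuous parity coboundary in the second step, which must coherently encode parity information globally on $X^d_G$ from purely local odd-cycle data, and this is effectively where the mixing hypothesis does its real work.
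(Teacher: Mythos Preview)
Your proposal has a genuine gap at Step~1, which you yourself flag as the ``main obstacle'' but do not actually address. The claim that every continuous cocycle is cohomologous to one whose value on $e_i$ depends only on the edge $(x_0,x_{e_i})$ is very strong, and nothing in your outline indicates how mixing or isotropy would collapse an arbitrary $r$-block cocycle down to a $1$-block one. In fact the paper never proves such a reduction, and there is no reason to expect it to hold in general: it would amount to saying that the square-group cocycle is \emph{universal} among all continuous cocycles, which is essentially the content of the theorem rather than a preliminary step. Everything after your Step~1 is comparatively soft group theory, so as written the proposal assumes what has to be proved.

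The paper's route for $(\Leftarrow)$ is entirely different. Rather than reducing cocycles to edge-local data, it shows that triviality of the even square group implies a combinatorial \emph{strip-gluing} property for $X^2_G$ (via a characterisation of square-decomposable cycles in terms of rectangular pattern completion), and then adapts Schmidt's specification argument to deduce that strip-gluing plus mixing forces every continuous cocycle to be cohomologous to a homomorphism. Higher dimensions are then handled by an inductive argument on projective strips, after checking that the even square group of $G^n$ is trivial whenever that of $G$ is. For $(\Rightarrow)$, the paper does something closer in spirit to your idea: it constructs the square-group cocycle explicitly and shows, using two carefully chosen periodic configurations, that it cannot be trivial when the even square group is not. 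But this is only one direction of your claimed bijection, and it is the easy one; the hard direction is precisely what your Step~1 leaves open.
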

Note that $X^d_G$ is mixing if and only if $G$ is a connected graph which is not bipartite. The mixing hypothesis is necessary to talk about cohomology given the standard definitions (see Section \ref{section.non.mixing}). 

 The second equivalence is almost immediate from the definition (see Proposition~\ref{Prop: basic_properties_square}). For the direction $(\Rightarrow)$ of the first equivalence, we use the action of the square group on the square cover to define a natural cocycle on the shift space taking values in the square group, and prove that this cocycle is nontrivial when the even square group is nontrivial (Proposition \ref{proposition.non.trivial.sqgrp.cocycle}).

The proof of the direction $(\Leftarrow)$ of the first equivalence relies mainly on techniques introduced by Schmidt \cite{Schmidt95} to show that two-dimensional shifts of finite type with a specification property admit no nontrivial cocycle. A similar specification-like property has been defined by Chung and Jiang \cite{zbMATH06774915} to prove triviality of cohomology in the context of shifts on groups more general than $\Z^d$. We introduce a strip-gluing condition (Definition~\ref{def:stripgluing}) which we prove to be a consequence of having a trivial even square group. Thereafter we adapt Schmidt's ideas to conclude cohomological triviality for strip-gluing homshifts. An important difference is that Schmidt's results apply to cocycles taking values in locally compact second countable groups while we focus on cocycles taking values in discrete groups. Our results further extend to higher dimensions.

However it is undecidable whether the even square group is trivial. Therefore we get the following result.
\begin{theorem}\label{thm.undecidability.cocycle}
For all $d > 1$, provided a graph $G$ such that $X^d_G$ is topologically mixing, it is not possible to decide if $X^d_G$ has a nontrivial cocycle or not.
\end{theorem}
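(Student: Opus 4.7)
The proof reduces to combining Theorem \ref{theorem: cohomological triviality} with the undecidability of a group-theoretic property of the input graph. The plan is as follows.

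First, I would use Theorem \ref{theorem: cohomological triviality} to turn the question into a group-theoretic one. Given a finite graph $G$ for which $X^d_G$ is topologically mixing --- equivalently, $G$ is connected and non-bipartite --- the theorem asserts that $X^d_G$ admits a nontrivial cocycle if and only if the even square group of $G$ is nontrivial. In particular, the identity map $G \mapsto G$ is a computable many-one reduction from the cocycle existence problem to the even square group triviality problem, restricted to graphs satisfying the mixing hypothesis. It therefore suffices to prove that the latter problem is undecidable on this class of graphs.

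Second, I would establish undecidability of even square group triviality by reducing from the classical triviality problem for finitely presented groups, which is undecidable by the Adian--Rabin theorem (building on the Novikov--Boone theorem on the word problem). The strategy is to associate to a given finite presentation $\langle S \mid R \rangle$ a finite connected non-bipartite graph $G_{\langle S \mid R \rangle}$, computable from the presentation, whose even square group is trivial precisely when the presented group is trivial. Such an encoding of finitely presented groups into square groups is the substance of the authors' earlier work in \cite{CGHO25, gao2018continuous}, and I would invoke that construction, making whatever small adjustments are needed to pass from the square group (where the encoding naturally lives) to its index-two even subgroup.

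The main technical obstacle is precisely this last adjustment: the mixing hypothesis forces us to work with connected non-bipartite graphs, while generic constructions of graphs from group presentations may fail to be non-bipartite or may alter the index-two even subgroup in unexpected ways. Overcoming this typically requires a gadget --- such as attaching an odd cycle at a distinguished vertex --- that breaks bipartiteness while leaving the relevant group invariants unchanged or modified in a controlled, reversible way. Verifying that such a modification preserves the reduction, and that triviality of the even square group of the modified graph still exactly encodes triviality of the input finitely presented group, is where the main care is needed. Once this is secured, composing the encoding with the undecidable triviality problem for finitely presented groups and pulling back through Theorem \ref{theorem: cohomological triviality} yields the theorem.
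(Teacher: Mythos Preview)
Your proposal is correct and follows essentially the same route as the paper: reduce via Theorem~\ref{theorem: cohomological triviality} to triviality of the even square group, then invoke the encoding from \cite{CGHO25} of finitely presented groups as square groups of bipartite graphs, and finally modify the graph to make it non-bipartite while controlling the even square group.

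One caution about your suggested gadget: attaching a generic odd cycle (length $\geq 3$) at a vertex would \emph{not} work. Such a cycle contains no squares, so it contributes a free $\Z$ factor to the square group; its square lies in the even square group and is nontrivial there, so even-square-group triviality would be destroyed regardless of the input presentation. The paper's choice is to add a \emph{self-loop} (Proposition~\ref{proposition: adding self loops}): because a self-loop $e$ satisfies $e\star e = 1$ (it is a backtrack), it contributes only a $\Z/2\Z$ free factor, and one checks that triviality of the even square group is preserved. This is exactly the ``controlled, reversible'' modification you anticipated, but the specific gadget matters.
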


This problem was known to be undecidable in shifts of finite type for generic reasons \cite{carrasco2025rice} (Rice-type theorem); we stress again that this kind of theorem cannot apply to homshifts. In general, undecidability results for homshifts require finer considerations.

While the properties of the square group (and hence the even square group) are undecidable (following \cite{gao2018continuous,CGHO25}), it has an explicit and natural description which is computable under reasonable assumptions: if the graph $G$ has no self-loops the square group is the fundamental group of the complex obtained by adding a square 2-cell $[0,1]^2$ on each `square' appearing in $G$. This construction can be easily adapted to the case when the graph does have self-loops by taking a bipartite cover first.

From this description, it follows that if the resulting complex is a surface or four-cycle free, we can compute the square group and thus the even square group quite explicitly. In both cases, since the resulting square group is hyperbolic, it opens applications of noncommutative ergodic theorems like the one of Karlsson and Margulis \cite{zbMATH01415732}. We hope that this will help us in the study of large scale phenomena in random graph homomorphisms in the future. 

We notice that the square group is reminiscent of the projective fundamental group introduced by Geller and Propp \cite{zbMATH00838593}. In the restricted case of four-cycle-free non-bipartite graphs, Paviet-Salomon identifies the projective fundamental group of the corresponding homshifts in his Ph.D. thesis \cite[Theorem \textbf{3.76}]{salomon2024iotandecidabilite}, which is the fundamental group of the graph in this case.

The article is organized as follows. After some background definitions in Section \ref{section.background}, the even square group is introduced in Section \ref{section.even.square.group}. In Section \ref{section.technical.results}, we prove a useful technical characterization of square-decomposability (notion introduced in \cite{gangloff2022short}) in terms of pattern completion. Section \ref{section.sq.cocycle} defines the square group cocycle and use it to prove direction $(\Rightarrow)$ in Theorem \ref{theorem: cohomological triviality}.
In Section~\ref{section.cocycle.infinite} we prove the direction $(\Leftarrow)$ in Theorem \ref{theorem: cohomological triviality} for two-dimensional homshifts ($d=2$) and generalize it to higher dimensions $(d > 2)$  in Section~\ref{section.cohomology.higher.dim}. In Section~\ref{section.boxext}, we prove that the box-extension property, which implies cohomological triviality in general \cite{Schmidt95}, is strictly stronger, even for homshifts. Finally, we discuss cohomological triviality for non-mixing subshifts in Section \ref{section.non.mixing}.

\section*{Acknowledgments}
We would like to thank Tom Meyerovitch and Klaus Schmidt for helping us along the way, especially regarding cohomology of non-mixing spaces. We thank Ville Salo for introducing us to the work by Chung and Jiang \cite{zbMATH06774915}. The third author is grateful to Léo Paviet-Salomon for various discussions and introducing him to the projective fundamental group. Research of N. Chandgotia was partially supported by SERB SRG grant and INSA fellowship.

Research of P. Oprocha was partially supported by the projects AIMet4AI No.~CZ.02.1.01/0.0/0.0/17\_049/0008414 and No.~CZ.02.01.01/00/23\_021/0008759 supported by EU funds, through the Operational Programme Johannes Amos Comenius.

\section{Background\label{section.background}}

{Throughout the article, $d>1$ refers to the dimension.}
For all $\boldsymbol{n} \in \mathbb{Z}^d$, we denote 
by \notationidx{$\lvert\boldsymbol{n}\rvert_1$}{the sum of coordinates of vector $\boldsymbol{n} \in \mathbb Z^d$} the sum $\boldsymbol{n}_1 + \ldots + \boldsymbol{n}_d$. For all $r \ge 0$, we set \notationidx{$\boldsymbol{B}^d(r)$}{notation for the set $\llbracket -r, r \rrbracket ^d$} $\coloneqq \llbracket -r, r \rrbracket ^d$ and denote by \notationidx{$\partial \boldsymbol{B}^d(r)$}{the border of the box $\boldsymbol{B}(r)$} 
the set $\boldsymbol{B}^d(r) \backslash \boldsymbol{B}^d({r}-1)$. 

\subsection{Graphs}

For every graph $G$, we denote by \notationidx{$V_G$}{vertex set of the graph $G$} the set of its vertices and \notationidx{$E_G$}{edge set of the graph $G$} the set of its edges, where an edge is a tuple $(u,v)$ with $u,v \in V_G$. We say that the graph is \textbf{undirected} when, for all $u,v$ vertices in $V_G$, $(u,v) \in E_G$ iff $(v,u) \in E_G$. We say that $G$ is \textbf{finite} when both $V_G$ and $E_G$ are finite. \bigskip 

A \textbf{walk} on $G$ is a finite word $p = p_0 \cdots p_n$ of vertices of $G$ such that $(p_i,p_{i+1})\in E_G$ for all $i < n$. A walk is said to be \textbf{simple}
when it does not take twice the same value. The integer $n$ is called the \textbf{length} of $p$ and is denoted by \notationidx{$l(p)$}{length of a walk $p$}. Such a walk $p$ is called a \textbf{cycle} 
when $p_{l(p)} = p_0$. Such a cycle is said to be \textbf{simple} when for all $i<j$, if $p_i = p_j$ then $i=0$ and $j=l(p)$. 
For any cycle $p$, we will denote by \notationidx{$\omega(p)$}{circular shift of a walk $p$} the \textbf{circular shift} of $p$, defined as $\omega(p) = p_1 \ldots p_{l(p)} p_1$. A cycle of length two is called a \textbf{backtrack}. A walk which does not contain any backtrack is called non-backtracking. A \textbf{square} is a non-backtracking cycle of length four. 
A walk is said to be \textbf{trivial} when it has length zero.
We say that a graph $G$ is connected when for all $u,v \in G$, there exists a walk $p$ on $G$ such that $p_0 = u$ and $p_{l(p)} = v$. 

    For any two walks $p,q$ such that $p_{l(p)} = q_0$, we denote by \notationidx{$p \odot q$}{notation for the `concatenation` operation on walks} the walk $p_0 \ldots p_{l(p)} q_1 \ldots q_{l(q)}$ and by $p^{-1}$ the reverse walk $p_{l(p)} \ldots p_0$. We denote by \notationidx{$\varphi$}{the map which removes backtracks from walks.} the function on the set of walks on $G$ such that for all walks $p$, $\varphi(p)$ is obtained from $p$ by replacing successively all backtracks $aba$ by $a$. The order of removal does not change the non-backtracking walk obtained at the end, so $\varphi$ is well-defined. {The proof is very similar to that used to define a free group (see for instance \cite[Chapter 6.7]{zbMATH00425998}) and shall be skipped}. For all walks $p,q$ such that $p_{l(p)} = q_0$, we set \notationidx{$p \star q$}{notation for the word obtained by removing backtracks from the walk concatenation of $p$ with $q$} $\coloneqq \varphi(p \odot p')$. The operation $\star$ is associative. 

{A \textbf{tree} is a graph that contains no nontrivial simple cycle}. A \textbf{spanning tree} of an undirected graph $G$ is a subgraph $T$ of $G$ {which is a tree such that} $V_T = V_G$.\bigskip

\textit{In the remainder of this article, graphs denoted by $G$ are assumed to be finite, undirected and connected.}

\subsection{Subshifts}

\subsubsection*{Elementary definitions}

Given a finite set $\mathcal{A}$ endowed with the discrete topology, we endow $\mathcal{A}^{\Z^d}$ with the corresponding product topology. 
Elements of $\mathcal{A}^{\Z^d}$ are called \textbf{configurations}.
The group $\Z^d$ acts on this space naturally by translations on configurations, via the \textbf{shift action} \notationidx{$\sigma$}{the shift action}, defined as follows: for every $\textbf{i} \in \Z^d$ and $x\in X$, we denote by $\sigma^{\textbf{i}}(x)\in \mathcal{A}^{\Z^d}$ the configuration given by
\[(\sigma^{\textbf{i}}(x))_{\textbf{j}}=x_{\textbf{i}+\textbf{j}}\text{ for all }\textbf{j} \in \mathbb{Z}^d.\]

Each of the maps $\sigma^{\textbf{i}}$ is, in fact, a homeomorphism on $\mathcal{A}^{\Z^d}$. A \textbf{subshift} is a closed subset $X\subset \mathcal{A}^{\Z^d}$ which is invariant under the shift action. Alternatively, one may define subshifts by means of forbidden patterns \cite[Section \textbf{13.10}]{MR4412543} as {follows}. Given a finite set $\mathbb{U} \subset \Z^d$, a \textbf{pattern} on $\mathbb{U}$ is an element of $\mathcal{A}^\mathbb{U}$. We denote by \notationidx{$\mathcal{A}^{\star}$}{the set of all finite patterns} the set of all finite patterns. Given a set $\mathcal F\subset \mathcal{A}^\star$ we denote by 
\[X_{\mathcal{F}}\coloneqq\{x\in \mathcal{A}^{\Z^d}~:~\text{ no translate of a pattern in $\mathcal F$ appears in $x$}\}.\]
It is well-known that $X\subset \mathcal{A}^{\Z^d}$ is a subshift if and only if there exists a set of forbidden patterns $\mathcal F\subset \mathcal{A}^{\star}$ such that $X=X_\mathcal F$; see \cite[Chapter \textbf{6}]{MR4412543} for the proof when $d=1$. The proof is similar in higher dimensions. A globally admissible pattern of $X$ is a pattern which appears in at least one configuration of $X$.
Of special interest are subshifts of \textbf{finite type}, which are subshifts $X_{\mathcal{F}}$ when the set $\mathcal F$ is finite.

\subsubsection*{Homshifts}

In this paper, we focus on a special class of shifts of finite type, called homshifts, which are more tractable than general shifts of finite type, while preserving some undecidability for much finer properties.

For two graphs $G$ and $H$, a \emph{graph homomorphism} $\phi: G\to H$ is a map from $V_G$ to $V_H$ which preserves adjacency, meaning that if $(g_1, g_2)\in E_G$ then $(\phi(g_1), \phi(g_2))\in E_H$. From here on, we will think of $\Z^d$ as both a group and as the $d$-dimensional grid graph, that is, the Cayley graph of this group with standard generators. Provided a graph $G$, the $d$-dimensional \textbf{homshift} on $G$ is the subshift of finite type \notationidx{$X^d_G$}{the $d$-dimensional homshift associated with $G$.} whose configurations are the graph homomorphisms from $\Z^d$ to $G$. Here are a few important examples.

\begin{enumerate}
    \item \textit{Proper colourings}. For $n \in \mathbb{N}$, denote by $K_n$ the complete graph on $\{1,2, \ldots, n\}$, whose edges are the $(i,j)$ such that $i\neq j$. Then $X^d_{K_n}$ is the space of proper $n$-colourings of $\Z^d$, that is, assignments of colours $1, 2, \ldots, n$ to vertices of the graph $\mathbb{Z}^d$ such that colours on adjacent vertices are distinct.
    \item \textit{Hard core model}. Let $G$ be the graph on $\{0,1\}$ with one edge between $0$ and $1$ and one self-loop on $0$. The homshift $X^d_G$ is the set of configurations where no two adjacent $1$'s are allowed and is called hard core model.
    \item \textit{Iceberg model}. Fix $M>1$ and consider
    the graph with vertices {$\{i\in \Z~:~1\leq |i| \leq M\}$} and edges $\{(i,j)~:~ij<-1\}$. The corresponding homshift is the iceberg model introduced by Burton and Steif \cite{MR1279469}.
\end{enumerate}

{A locally admissible pattern of a homshift is a homomorphism from a finite subgraph of $\mathbb Z^d$ to $G$.} Every locally admissible pattern on a box $\boldsymbol{B}^d(\boldsymbol{n})$ is globally admissible (Proposition \textbf{2.1} in \cite{MR3743365}). Furthermore, {topological mixing is easy to decide for homshifts:}

\begin{lemma}[Proposition \textbf{3.1} in \cite{MR3743365}]\label{lemma.mixing.bipartite}
        {For every graph $G$, $X^d_G$ is topologically mixing if and only if $G$ is not bipartite.}
\end{lemma}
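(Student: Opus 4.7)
The two implications have very different flavors and I would treat them separately.

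Direction $(\Rightarrow)$, via the contrapositive, uses a parity obstruction. If $G$ is bipartite with $V_G = A \sqcup B$, then for every $x \in X^d_G$ the value $x_{\boldsymbol{i}}$ lies in a fixed part of $G$ depending only on the parity of $|\boldsymbol{i}|_1$; this is because $\Z^d$ is connected and each of its edges must map to an edge of $G$, hence to an edge crossing the bipartition. Fixing $a \in A$, the singleton pattern ``$a$ at $\boldsymbol{0}$'' is globally admissible, but it cannot coexist in any configuration of $X^d_G$ with the shifted singleton ``$a$ at $\boldsymbol{n}$'' whenever $|\boldsymbol{n}|_1$ is odd, regardless of how large $|\boldsymbol{n}|_\infty$ is. Since such $\boldsymbol{n}$ with $|\boldsymbol{n}|_\infty$ arbitrarily large exist (e.g.\ $\boldsymbol{n} = (2k+1, 0, \ldots, 0)$), $X^d_G$ is not topologically mixing.

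For direction $(\Leftarrow)$, assume $G$ is connected and non-bipartite. The engine is a one-dimensional walk-counting statement: there exists $N_0 = N_0(G)$ such that for every $u, v \in V_G$ and every integer $k \geq N_0$, $G$ admits a walk of length exactly $k$ from $u$ to $v$. This is obtained by combining (i) a walk of length at most $\mathrm{diam}(G)$ from connectivity, (ii) insertions of length-$2$ backtracks $aba$ to increase the length freely by $2$, and (iii) a detour through a shortest odd cycle to flip the parity of the length at the cost of at most $2\,\mathrm{diam}(G)$ plus the odd girth.

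Given globally admissible patterns $p, q$ on finite sets $U, V$, enclose them in a common box $\boldsymbol{B}^d(r)$ and extend them to configurations $x_1, x_2 \in X^d_G$. For $|\boldsymbol{n}|_\infty \geq 2r + N_0$, after permuting coordinates I may assume $\boldsymbol{n}_1 \geq 2r + N_0$. I construct $y \in X^d_G$ equal to $x_1$ on the halfspace $\{\boldsymbol{i}_1 \leq r\}$ and to $\sigma^{-\boldsymbol{n}}(x_2)$ on $\{\boldsymbol{i}_1 \geq \boldsymbol{n}_1 - r\}$, filling the intermediate slab with a coherent transition: along each line parallel to $\boldsymbol{e}_1$ in the slab, the walk-length lemma supplies a walk of length $\boldsymbol{n}_1 - 2r \geq N_0$ joining the endpoints prescribed by $x_1$ and $x_2$. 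Once a locally admissible pattern on a large enclosing box is produced, the box-extension property stated just before the lemma upgrades it to an element of $X^d_G$.

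The main obstacle is the coherence of the transition across lines: the walks chosen along adjacent lines in the $\boldsymbol{e}_1$ direction must also respect $G$-adjacency in the remaining $d-1$ directions. I expect to handle this by an induction on $d$, reducing the coherent slab-filling problem in $X^d_G$ to an analogous lower-dimensional instance of the same gluing problem for $X^{d-1}_G$, or alternatively by choosing the connecting walks in a canonical manner that is manifestly stable under small perturbations of the endpoints. This is the technically subtle step of the argument.
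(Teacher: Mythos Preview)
The paper does not prove this lemma; it is quoted from \cite{MR3743365}, so there is no in-paper argument to compare against. Your contrapositive for $(\Rightarrow)$ is correct and complete.

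For $(\Leftarrow)$ you have correctly located the obstacle and then left it open: choosing a connecting walk along each line parallel to $\boldsymbol{e}_1$ independently gives no control on adjacency in the transverse directions, and there is no reason a ``canonical'' choice of walk in $G$ (e.g.\ via a spanning tree) should remain adjacent step by step when the starting vertex is replaced by a neighbour. The missing idea is to recast the slab-filling as a \emph{one}-dimensional problem in a derived graph. A locally admissible pattern on $\llbracket r, \boldsymbol{n}_1 - r\rrbracket \times \boldsymbol{B}^{d-1}(r)$ is precisely a walk of length $\boldsymbol{n}_1 - 2r$ in the graph $H$ whose vertices are the locally admissible patterns on $\boldsymbol{B}^{d-1}(r)$ and whose edges are the pairs of vertexwise $G$-adjacent patterns. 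Your walk-length argument then applies verbatim to $H$ rather than to $G$, once you know that $H$ is itself connected and non-bipartite. For $d=2$ this $H$ is the graph $G^{2r+1}$ of walks of length $2r$ on $G$ (in the paper's notation), and one checks directly that $G$ connected non-bipartite implies the same for $G^n$: an odd cycle $c_0 c_1 \cdots c_{2k} c_0$ in $G$ lifts to the odd cycle $w^{(0)} \cdots w^{(2k)} w^{(0)}$ in $G^n$ with $w^{(j)}_i = c_{(i+j)\bmod(2k+1)}$, and connectedness follows from a shifting argument on walks. For $d>2$ one iterates, using $\rho_{2r+1}(X^d_G) \cong X^{d-1}_{G^{2r+1}}$. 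This \emph{is} the induction on $d$ you gestured at, but passing to the derived graph $H$ is what makes it go through; working line by line in $G$ does not.
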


\subsubsection*{Cohomology}
 A \textbf{cocycle} \cite{Schmidt95} on a $d$-dimensional subshift $X$ is a map $c : \mathbb{Z}^d \times X \rightarrow \mathbb{G}$, where $\mathbb{G}$ is a discrete group, {which satisfies the \textbf{cocycle equation}}: for all $\boldsymbol{m},\boldsymbol{n} \in \mathbb{Z}^d$ and for all $z \in X$,
    \begin{equation}\label{eq;cocycle-def}c(\boldsymbol{m+n},z) = c(\boldsymbol{m},\sigma^{\boldsymbol{n}}(z)) c(\boldsymbol{n},z) = c(\boldsymbol{n},\sigma^{\boldsymbol{m}}(x)) c(\boldsymbol{m},x).
    \end{equation}
    A cocycle is said to be \textbf{continuous} when for all $\boldsymbol{n} \in \mathbb{Z}^2$, $c(\boldsymbol{n},\cdot)$ is continuous. {We consider two cocycles $c : \mathbb Z^d \times X \rightarrow \mathbb G$ and $c' : \mathbb Z^d \times X \rightarrow \mathbb G'$ to be identical whenever there is an isomorphism $\beta : \mathbb G \rightarrow \mathbb G'$ such that $c' = \beta \circ c$.} Two cocycles $c,c' : \mathbb{Z}^d \times X \rightarrow \mathbb{G}$ are said to be continuously \textbf{cohomologous} when there exists a continuous map $b: X \rightarrow \mathbb{G}$, called a \textbf{transfer function}, such that for all $\boldsymbol{n} \in \mathbb{Z}^d$, and $z \in X$,
    \[c(\boldsymbol{n},z) ={b(\sigma^{\boldsymbol{n}}(z))}^{-1} c'(\boldsymbol{n},z) b(z).\]
    A cocycle $c$ is called \textbf{group homomorphism} when $c(\boldsymbol{n},\cdot)$ is constant for all $\boldsymbol{n}$. In this case, we drop $z$ in the notations and write $c(\boldsymbol{n})$ instead of $c(\boldsymbol{n},z)$.
    We say that a cocycle is \textbf{trivial} when it is cohomologous to a group homomorphism.  Furthermore, we say that a subshift $X$ is \textbf{cohomologically trivial} when all its continuous cocycles are trivial.

\begin{remark}
As a consequence of the cocycle equation, one can compute $c(\boldsymbol{n},x)$ from the values of $c( \pm \boldsymbol{e}^i,\cdot)$. For any walk $w=w_0 w_1 w_2 \ldots w_r$ from $0$ to $\boldsymbol{n}$, we have:
\begin{equation}\label{equation.product.along.walk}
    c(\boldsymbol{n},x) = \prod_{i=r-1}^{0} c(w_{i+1}-w_i, \sigma^{w_i}(x)),
\end{equation}
and this value is independent of the choice of $w$.
\end{remark}

\begin{remark}\label{remark.nullvalue.cocycle}
    Note also that for every cocycle $c : \mathbb Z ^d \times X \rightarrow \mathbb G$ and every configuration $x \in X$, the cocycle equation written with $\boldsymbol{n} = \boldsymbol{m} = \boldsymbol{0}$ yields 
    $c(\boldsymbol{0},x) = c(\boldsymbol{0},x) c(\boldsymbol{0},x)$. Dividing by $c(\boldsymbol{0},x)$, we obtain $c(\boldsymbol{0},x) = 1_{\mathbb G}$. Furthermore, for all $\boldsymbol{n}$, 
    $c(-\boldsymbol{n},x) = c(\boldsymbol{n},\sigma^{-n}(x))^{-1}$, as by the cocycle equation, we have:
    \[1_{\mathbb G} = c(\boldsymbol{0},x) = c(\boldsymbol{n} - \boldsymbol{n},x) = c(\boldsymbol{n},\sigma^{-\boldsymbol{n}}(x))c(-\boldsymbol{n},x).\]
\end{remark}

\paragraph{Motivation}

When defining cocycles and what it means for a cocycle to be trivial, Schmidt was inspired by groups which were introduced by Conway and  Lagarias \cite{MR1041445,MR1072815} in the context of tilings, {and the idea of deciding which regions can be tiled by a given set of tiles using maps to these groups.} 
In our context, this question is similar to the one of which homomorphisms on the boundary of a box can be extended inside the box.
Let us see how cocycles may be used to answer this question. 
 Consider a subshift $X$ and a cocycle $c$ on $X$. Assume that for $i= 1,2$, the maps $c( \pm \boldsymbol{e}^i,\cdot)$ are $r$-block maps.  For any pattern $q$ {on $\llbracket-r,r\rrbracket^2$ which can be extended to a configuration $x\in X$ }we can define $c(\pm \boldsymbol{e}^i,q)$ as $c(\pm \boldsymbol{e}^i,x)$.  Take a pattern $p$ on support $\partial\llbracket 0,n\rrbracket^2 + \llbracket -r,r\rrbracket^2$. 
  For $w$ and $w'$ the  shortest walks from $\boldsymbol{0}$ to $\boldsymbol{n}$ on the boundary of the box - respectively going up then right and going right then up - if we have 
 \[\prod_{i=r-1}^{0} c(w_{i+1}-w_i, p_{w_i + \llbracket -r,r\rrbracket^2}) \neq \prod_{i=r-1}^{0} c(w'_{i+1}-w'_i, p_{w'_i + \llbracket -r,r\rrbracket^2}),\] 
 then the pattern $p$ cannot be extended to a configuration in $X$ (otherwise \eqref{equation.product.along.walk} would not be satisfied for this configuration).

On the other hand, if $p$ can be extended to a configuration in $X$, then the two values must be equal. 
While this is not a sufficient condition, it is necessary. In many interesting cases, we can use this {and some additional ideas} to get a necessary and sufficient condition \cite{MR1072815,tassy2014tiling}. Note that the problem disappears if the cocycle is cohomologous to a group homomorphism. Indeed, suppose a cocycle $c$ takes values in a discrete group $\mathbb G$ and is cohomologous to a group homomorphism $c'$ for a transfer function $b$. Further assume that both $c(\boldsymbol{e}^i, \cdot)$ for $i=1,2$ and the transfer function $b$ are all $r$-block maps. Then we have that $c(n\boldsymbol{e}^1+n\boldsymbol{e}^2,x)$ equals
\[ (b(\sigma^{n\boldsymbol{e}^1+n\boldsymbol{e}^2}(x)))^{-1}b(x).\]
In other words, the cocycle will give us no information whatsoever whether $x$ can be extended to a graph homomorphism. 

\begin{example}
    Consider the subshift of three-colorings, which is the homshift associated with the graph $K_3$, the clique on three vertices $0,1,2$. Define a cocycle $c:  \Z^2\times X^2_{K_3} \to \Z$, by defining it for $e = (1,0)$ or $e=(0,1)$ by:
    \[c(e, x) = 
    \begin{cases}
    1 & \text{ if } x_{0}x_{e} = 01\\
    -1 & \text{ if } x_{0}x_{e} = 10\\
    0 & \text{otherwise,}
    \end{cases}\]
    and extending the definition to other values in the way determined by the cocycle equation. This cocycle is nontrivial (Proposition~\ref{proposition.non.trivial.sqgrp.cocycle}). To illustrate how cocycles can be useful, notice that a square of side length $6$ whose border is colored with the cycle $(012)^{8}$ (clockwise, for instance) cannot be filled into a locally admissible pattern $p$ on the whole square, since for any configuration $x$ containing this pattern on position $\boldsymbol{0}$, we would have 
    $0 = c(\boldsymbol{0},x)$ and $c(\boldsymbol{0},x) = 8$, which is impossible.
\end{example}
Finally, we would like to mention that the notion of cohomology defined here is distinct from other notions of cohomology on similar symbolic spaces (look for instance at \cite{zbMATH06767696}).

\section{Square group and square-decomposability}

{This section introduces some key tools for the study of the cohomology of homshifts. Section~\ref{sec:basicdefs} summarizes some basic facts about the square group and square-decomposability for which proofs and more information can be found in \cite{CGHO25}; the following subsections introduce new concepts and results that we use in the article.}

\subsection{Basic definitions}\label{sec:basicdefs}

Consider a graph $G$ and a vertex $a$ of $G$. We denote by \notationidx{$\mathcal{U}_G[a]$}{the set of non-backtracking walks on $G$ beginning at $a$} the set of non-backtracking walks on $G$ which begin at $a$, and the fundamental group of $G$, $\pi_1(G)[a]\subset \mathcal{U}_G[a]$, is the subset of cycles. Note that $(\pi_1(G)[a],\star)$ is a group, {and all groups $\pi_1(G)[a]$, $a \in V_G$, are isomorphic}. Denote by \notationidx{$\Delta(G)[a]$}{group of the square-decomposable cycles of $G$ beginning at $a$} the smallest normal subgroup of \notationidx{$\pi_1(G)[a]$}{fundamental group of $G$ on base point $a$} which contains all cycles $p \star s \star p^{-1}$, where $s$ is a square such that $s_0 = p_{l(p)}$ and $p_0 = a$; {again all groups $\Delta(G)[a]$, $a \in V_G$, are isomorphic}.

\begin{definition}
    Two walks $p,q$ on $G$ which begin at $a$ and end at the same vertex \textbf{differ by a square} when $p \star q^{-1} = w \star s \star w^{-1}$ for some walk $w$ and some square $s$. They are said to be \textbf{square-equivalent} when $p \star q^{-1} \in \Delta(G)[a]$.
\end{definition}

\begin{definition}
    A cycle on $G$ is said to be \textbf{square-decomposable} when it is square-equivalent to a trivial cycle. 
    We say that $G$ is \textbf{square-decomposable} when all of its cycles are square-decomposable.
\end{definition}
Note that a square-decomposable cycle is of even length. In particular, a square-decomposable graph is bipartite.
\begin{definition}\label{def:sqgroup}
    Let us denote by \notationidx{$\pi_1^{\square}(G)[a]$}{the square group of $G$ on base point $a$} the quotient of the group $\pi_1(G)[a]$ by $\Delta(G)[a]$ 
    and denote by \notationidx{$\mathcal{U}^{\square}_G[a]$}{the square cover of $G$ on base point $a$} the quotient of $\mathcal{U}_G[a]$ by the square-equivalence relation.
\end{definition}

\begin{remark}
    All the groups $\pi_1^{\square}(G)[a]$, $a \in V_G$, are isomorphic. We call its isomorphic class the \textbf{square group} of $G$, and we denote it by $\pi_1^\square(G)$.
\end{remark}

\subsection{The even square group\label{section.even.square.group}}

We introduce the subgroup of the square group {that consists of cycles of even length. One way to see why this restriction is natural is that a partial homomorphism from the border of a region of $\mathbb Z^2$ corresponds to a cycle of even length in $G$. These are the relevant cycles with regards to cohomology, as we will see} in Section \ref{section.cocycle.infinite}. 

\begin{definition}
Let $\mathcal E_G[a] \subset \pi_1(G)[a]$ denote the set of even length cycles in $G$. 
Then $(\mathcal E_G[a],\star)$ is a subgroup of $(\pi_1(G)[a],\star)$.
Note that $\Delta(G)[a]$ is also a normal subgroup of $\mathcal E_G[a]$. Their quotient is denoted by \notationidx{$\mathcal{E}_G^{\square}[a]$}{the even square group of $G$ on base point $a$}:
\[\mathcal{E}_G^{\square}[a] \coloneqq\mathcal E_G[a]/\Delta(G)[a].\]
The groups $\mathcal{E}_G^{\square}[a]$, $a \in G$ are isomorphic and their equivalence class is called the \textbf{even square group} of $G$ and is denoted by $\mathcal{E}^{\square}_G$. 
\end{definition}
 The following implies the second equivalence of Theorem~\ref{theorem: cohomological triviality}.
\begin{proposition}\label{Prop: basic_properties_square}
\textbf{1.} If $G$ is bipartite then 
${\mathcal E_G=\pi_1(G)}$ 
and hence 
${\mathcal{E}_G^{\square}=\pi_1^\square(G)}$. 
\textbf{2.} If $G$ is not bipartite then 
${ \pi_1^\square(G)/\mathcal{E}_G^{\square}\cong \Z/2\Z}$.
\end{proposition}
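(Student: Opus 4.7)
The plan is to introduce the length-parity homomorphism and use it to control both parts uniformly. Fix a basepoint $a \in V_G$ and define $\ell : \pi_1(G)[a] \to \Z/2\Z$ by $\ell(p) = l(p) \bmod 2$. I would first verify that $\ell$ is a well-defined group homomorphism with respect to the operation $\star$: the raw concatenation $p \odot q$ has length $l(p) + l(q)$, and each backtrack cancellation $aba \mapsto a$ performed by $\varphi$ shortens the walk by exactly two, so the parity is preserved when we pass from $p \odot q$ to $p \star q = \varphi(p \odot q)$. By construction, $\ker(\ell) = \mathcal{E}_G[a]$.

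For part 1, if $G$ is bipartite with classes $V_0 \sqcup V_1$, then any cycle based at $a$ alternates between the two classes and must return to the class of $a$, so its length is even. Hence $\ell \equiv 0$ and $\mathcal{E}_G[a] = \pi_1(G)[a]$. Quotienting both sides by the normal subgroup $\Delta(G)[a]$ yields $\mathcal{E}_G^\square[a] = \pi_1^\square(G)[a]$, which passes to the isomorphism classes as $\mathcal{E}_G^\square = \pi_1^\square(G)$.

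For part 2, I would first verify that $\Delta(G)[a] \subseteq \mathcal{E}_G[a]$. A square has length $4$, so each generator $p \star s \star p^{-1}$ of $\Delta(G)[a]$ satisfies $\ell(p \star s \star p^{-1}) = \ell(p) + 0 + \ell(p) = 0$ and hence lies in $\ker(\ell)$; since $\ker(\ell)$ is a normal subgroup, the whole normal subgroup $\Delta(G)[a]$ is contained in it. Therefore $\ell$ descends to a homomorphism $\bar\ell : \pi_1^\square(G)[a] \to \Z/2\Z$ whose kernel is exactly $\mathcal{E}_G^\square[a] = \mathcal{E}_G[a]/\Delta(G)[a]$. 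Non-bipartiteness supplies surjectivity: if $c$ is an odd cycle of $G$ and $w$ is a walk from $a$ to a vertex of $c$, then $w \star c \star w^{-1}$ is a cycle based at $a$ of odd length, since $l(w) + l(c) + l(w)$ is odd and $\star$ preserves parity. The first isomorphism theorem then gives $\pi_1^\square(G)/\mathcal{E}_G^\square \cong \Z/2\Z$.

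I do not anticipate any serious obstacle here. The only two points worth isolating as small lemmas are that $\star$ preserves length parity (immediate once one notes that $\varphi$ removes edges two at a time) and that in a connected non-bipartite graph every basepoint $a$ admits an odd closed walk (via the walk-and-return trick above). Everything else reduces to standard bookkeeping with the first isomorphism theorem.
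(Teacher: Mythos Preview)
Your proof is correct and rests on the same idea as the paper's: the parity of cycle length. The packaging differs slightly. The paper invokes the third isomorphism theorem to reduce Part~2 to showing $\pi_1(G)/\mathcal E_G \cong \Z/2\Z$, and then argues directly that any two non-identity cosets multiply to the identity; you instead write down the parity homomorphism $\ell$ explicitly, check that it kills $\Delta(G)$ so as to descend to $\pi_1^\square(G)$, and apply the first isomorphism theorem. Your route has the small advantage of making the containment $\Delta(G)[a] \subseteq \mathcal E_G[a]$ explicit rather than relying on it being stated in the definitions, and it yields the homomorphism $\bar\ell$ concretely; the paper's route is marginally shorter once that containment is granted. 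Both are standard and essentially interchangeable.
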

\begin{proof}
\textbf{1.} When $G$ is bipartite then all cycles have even length and hence $\mathcal E_G=\pi_1(G)$. 
\textbf{2.} By the third isomorphism theorem for groups, since $\mathcal{E}_G$ is normal in $\pi_1(G)$, it is sufficient to prove that $\pi_1(G)/\mathcal E_G\cong \Z/2\Z.$
In this proof, we denote by $\overline{c}$ the class of $c \in \pi_1(G)$ in $\pi_1(G)/\mathcal E_G$. For all $c,c' \in \pi_1(G)$ such that $\overline{c} \neq 1$ and $\overline{c}'\neq 1$, $c$ and $c'$ have odd length. Thus $c \star c'$ has even length and this implies that $\overline{c} \star \overline{c}' = 1$. As a consequence, $\pi_1(G)/\mathcal E_G$ is generated by one element whose square is $1$, meaning that $\pi_1(G)/\mathcal E_G \cong \Z/2\Z$.\qedhere
\end{proof}

\begin{proposition}\label{proposition: adding self loops}
    Let $G$ be a bipartite graph, and $G'$ obtained from $G$ by adding a self-loop. Then $\pi^\square_1(G') = \pi^\square_1(G) \ast \Z/2\Z$ (free product). In particular, $\mathcal{E}_{G}^{\square}$ is trivial if and only if $\mathcal{E}_{G'}^{\square}$
    is trivial.
\end{proposition}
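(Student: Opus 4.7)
The plan is to prove the free-product decomposition $\pi_1(G') \cong \pi_1(G) \ast \Z/2\Z$ and observe that the collection of squares in $G'$ is the same as in $G$, so that the decomposition descends to the square group; the ``in particular'' clause then follows from a short parity argument.

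First I would fix a non-backtracking walk $p$ in $G$ from $a$ to the vertex $v_0$ carrying the new self-loop, and set $\ell = p \star (v_0 v_0) \star p^{-1} \in \pi_1(G')[a]$. A direct $\varphi$-reduction of the concatenation $p(v_0 v_0)p^{-1} \odot p(v_0 v_0)p^{-1}$ (cancel the interior $p^{-1} \odot p$, then collapse the resulting $v_0 v_0 v_0$) yields $\ell \star \ell = 1$, so the natural homomorphism
\[
\Phi : \pi_1(G) \ast \langle \ell \mid \ell^2 \rangle \longrightarrow \pi_1(G')
\]
is well defined. It is surjective because any non-backtracking cycle in $G'$ at $a$ splits at its self-loop traversals into walks in $G$ joined by copies of $\ell$. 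Injectivity is the normal-form statement that an alternating word $g_0 \star \ell \star g_1 \star \cdots \star \ell \star g_k$ with interior $g_i \in \pi_1(G)$ nontrivial remains nontrivial after $\varphi$-reduction.

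Next I would observe that squares in $G'$ coincide with squares in $G$: a non-backtracking $4$-cycle using the self-loop once would produce a triangle through $v_0$ in $G$, impossible since $G$ is bipartite, and using it twice forces a backtrack. Consequently $\Delta(G')$ is the normal closure in $\pi_1(G') \cong \pi_1(G) \ast \Z/2\Z$ of $\Delta(G) \subset \pi_1(G)$, and the standard free-product identity $(A \ast B)/N_{A \ast B}(S) \cong (A/N_A(S)) \ast B$ for $S \subseteq A$ gives $\pi_1^{\square}(G') \cong \pi_1^{\square}(G) \ast \Z/2\Z$.

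For the ``in particular'' clause, Proposition~\ref{Prop: basic_properties_square} gives $\mathcal{E}^{\square}_G = \pi_1^{\square}(G)$ since $G$ is bipartite, and all elements of $\pi_1^{\square}(G)$ have even length; hence the free-factor inclusion $\pi_1^{\square}(G) \hookrightarrow \pi_1^{\square}(G) \ast \Z/2\Z = \pi_1^{\square}(G')$ lands inside $\mathcal{E}^{\square}_{G'}$, so $\mathcal{E}^{\square}_{G'} = 1$ forces $\mathcal{E}^{\square}_G = 1$. Conversely, if $\mathcal{E}^{\square}_G = 1$ then $\pi_1^{\square}(G') \cong \Z/2\Z$ is generated by $\ell$ of odd length $2|p|+1$, so its even-cycle subgroup $\mathcal{E}^{\square}_{G'}$ is trivial. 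The main technical obstacle is the injectivity of $\Phi$, i.e.\ ruling out cascading $\varphi$-reductions across an $\ell$-letter in alternating words; a cleaner bypass is topological, identifying $\pi_1^{\square}(G')$ with the fundamental group of the $2$-complex obtained from $G'$ by attaching a disk along $\ell^2$ (encoding the backtrack relation for the self-loop) and along every square, and invoking van Kampen.
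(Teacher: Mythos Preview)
Your approach is essentially the same as the paper's, with one simplification you missed: the paper takes the base point $a$ to be the vertex $v_0$ carrying the self-loop, so that $p$ is trivial and $\ell$ is just the length-$1$ cycle $e = v_0 v_0$. This removes the conjugation by $p$ entirely and makes the $\varphi$-reduction of $e \star e$ immediate (it is $\varphi(v_0 v_0 v_0) = v_0$). Otherwise the argument is identical: bipartiteness forces every square of $G'$ to lie in $G$, so the free-product decomposition $\pi_1(G') \cong \pi_1(G) \ast \Z/2\Z$ descends to the square groups, and the ``in particular'' clause follows by parity (the paper invokes Proposition~\ref{Prop: basic_properties_square}(2) directly rather than spelling out the length of $\ell$, but this is the same observation).

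You are right that the injectivity of $\Phi$ is where the content lies, and the paper is no more explicit than you are: it simply asserts that a presentation of $\pi_1(G')[a]$ is obtained from one of $\pi_1(G)[a]$ by adjoining $e$ with $e^2 = 1$. Your van Kampen suggestion is a clean way to justify this, though with the base point at $v_0$ one can also argue combinatorially with less pain: any non-backtracking cycle at $v_0$ in $G'$ parses uniquely as an alternating concatenation of non-backtracking cycles in $G$ and single copies of $e$ (no two consecutive $e$'s, by non-backtracking), and this parsing is exactly the normal form in the free product.
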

\begin{proof}
{Denote by $a$ the vertex of $G$ on which the self-loop is added to obtain $G'$.}
Denote by $e\coloneqq aa$ the cycle corresponding to the self loop on the vertex $a$. Notice that $e$ doesn't appear in any square in $G'$, as $G$ is bipartite. A presentation of the group $\pi_1(G')[a]$ can thus be obtained from any presentation of $\pi_1(G)[a]$ by adding a generator corresponding to the cycle $e$ and the relation $e^2 = 1$ (since $e^2$ is a backtrack). This means that $\pi^\square_1(G')[a] = \pi^\square_1(G)[a] \ast \Z/2\Z$. Thus, if $ \mathcal{E}_{G}^{\square}[a] = \pi^\square_1(G)[a]$ is trivial, then $\pi^\square_1(G')[a] = \Z/2\Z$. {By Proposition \ref{Prop: basic_properties_square}, point 2, since $G'$ is not bipartite,} $\mathcal{E}_{G'}^{\square}[a]$ is trivial. Conversely, any nontrivial element in $\mathcal{E}_{G}^{\square}[a]$ is nontrivial in $\pi^\square_1(G')[a]$. Since it is in $\mathcal{E}_{G'}^{\square}[a]$, $\mathcal{E}_{G'}^{\square}[a]$ is nontrivial.
\end{proof}

To end this section, let us prove that the even square group of any graph is the square group of some bipartite graph. Provided a non-bipartite graph $G$, its \textbf{bipartite cover} is the graph $G'$ such that $V_{G'}= V_G\times\{0,1\}$ and 
\[E_{G'} =\{((v,i),(w,1-i))~:~(v,w)\in E_G\text{ and }i\in \{0,1\}\}.\]
It is easy to check that $G'$ is a bipartite connected graph where the partite classes are $V_G\times \{0\}$ and $V_G\times \{1\}$.

\begin{proposition}\label{proposition: bipartite graphs and stuff}
For $G$ a non-bipartite graph and $G'$ its bipartite cover, we have:
\[\mathcal E^\square_{G}\cong\mathcal E^\square_{G'}=\pi_1^\square(G').\]
\end{proposition}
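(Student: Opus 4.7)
The equality $\pi_1^\square(G') = \mathcal E^\square_{G'}$ is immediate from Proposition~\ref{Prop: basic_properties_square}, point 1, since $G'$ is bipartite by construction. The task is therefore to prove $\mathcal E^\square_G[a] \cong \pi_1^\square(G')[(a,0)]$ for a chosen base point $a \in V_G$.

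The plan is to use a natural lifting map. For any walk $p = p_0 p_1 \cdots p_n$ in $G$ starting at $a$, define its lift $\widetilde p$ in $G'$ by $\widetilde p_i = (p_i,\, i \bmod 2)$. By the definition of the edges of $G'$, this is a well-defined walk starting at $(a,0)$, and projection onto the first coordinate gives an inverse on walks in $G'$ starting at $(a,0)$. The lift preserves length, and since a backtrack $xyx$ in $G$ corresponds to a backtrack in $G'$ and conversely (the projection is locally injective on neighborhoods), it preserves non-backtracking in both directions. In particular $p$ is an even-length cycle at $a$ if and only if $\widetilde p$ is a cycle at $(a,0)$, yielding a bijection $\mathcal E_G[a] \leftrightarrow \pi_1(G')[(a,0)]$.

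I would then check that this bijection is a group isomorphism with respect to $\star$. For even cycles $p, q$ at $a$, both lifts begin and end at $(a,0)$, so concatenation and reversal commute with lifting ($\widetilde{p \odot q} = \widetilde p \odot \widetilde q$ and $\widetilde{p^{-1}} = \widetilde p^{\,-1}$, using crucially that $l(p)$ is even); since lifting also commutes with backtrack removal, $\widetilde{p \star q} = \widetilde p \star \widetilde q$.

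Finally I would show the isomorphism carries $\Delta(G)[a]$ onto $\Delta(G')[(a,0)]$. Squares in $G$ have length $4$ so they lift to squares in $G'$, and by the same length and non-backtracking argument every square in $G'$ projects to one in $G$. Hence a generator $p \star s \star p^{-1}$ of $\Delta(G)[a]$ lifts to an element of the form $\widetilde p \star s' \star \widetilde p^{\,-1}$ for a suitable square $s'$ at the endpoint of $\widetilde p$, a generator of $\Delta(G')[(a,0)]$; and conversely every such generator in the cover is the lift of its projection downstairs. Passing to quotients then gives $\mathcal E^\square_G[a] \cong \pi_1^\square(G')[(a,0)]$, completing the proof. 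The only delicate point is bookkeeping with base points when lifting concatenations and inverses of walks of odd length, but the evenness assumption built into $\mathcal E_G[a]$ makes this routine; no genuinely new ingredient is required beyond the dictionary provided by the bipartite cover.
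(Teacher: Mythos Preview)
Your proof is correct and follows essentially the same approach as the paper: both use the bijection between walks in $G'$ based at $(a,0)$ and walks in $G$ based at $a$ arising from the covering map $G'\to G$, check that it respects $\star$ and carries squares to squares, and pass to the quotient. The only cosmetic difference is direction: the paper works with the projection $\phi:G'\to G$, $(v,i)\mapsto v$, and invokes the walk-lifting property of covering maps in one line, while you construct the explicit lift $\widetilde p_i=(p_i,\,i\bmod 2)$ and verify the needed compatibilities by hand.
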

\begin{proof}
Let $\phi:G'\to G$ be the (covering) map such that for all $(v,i) \in V$, $\phi((v,i))=v$. Let us fix a vertex $a' \in G'$. Since $\phi$ is a graph homomorphism, the map from $\mathcal E_{G'}[a']$ to $\mathcal E_{G}[\phi(a')]$ given by applying $\phi$ vertex by vertex is a group isomorphism. {This comes directly from the walk-lifting property of covering maps, which states that any walk in the image graph has a unique preimage by the pointwise application of this covering map, up to the choice of initial vertex \cite[Proposition \textbf{3.9}]{CGHO25}.}

Furthermore the restriction of this map to $\Delta(G')[a']$ is an isomorphism onto $\Delta(G)[\phi(a')]$. 
It follows that the $\mathcal{E}^{\square}_{G'}$ and $\mathcal{E}^{\square}_{G}$ are isomorphic.
\end{proof}

\begin{proposition}\label{proposition.undecidability}
    It is undecidable whether the even square group of a graph $G$ corresponding to a mixing two-dimensional homshift is trivial or not.
\end{proposition}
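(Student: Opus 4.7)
The plan is to reduce from a known undecidability result about triviality of the square group. The papers \cite{gao2018continuous, CGHO25} establish that for arbitrary finite graphs it is algorithmically undecidable whether the square group $\pi_1^\square(G)$ is trivial; such results are typically obtained by encoding finitely presented groups (with their undecidable triviality problem, by Adian--Rabin) into the square-cell complex whose fundamental group is $\pi_1^\square(G)$. So we may take as input a bipartite graph $G$ for which triviality of $\pi_1^\square(G)$ is undecidable; if the cited results do not directly provide the bipartite form, we first pass to the bipartite cover and use Proposition \ref{proposition: bipartite graphs and stuff}, which identifies $\pi_1^\square$ of the bipartite cover with the even square group of the original graph.

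Given such a bipartite $G$, Proposition \ref{Prop: basic_properties_square}, part 1, gives $\mathcal{E}^\square_G = \pi_1^\square(G)$, so deciding triviality of $\mathcal{E}^\square_G$ is already undecidable. However, the associated homshift $X^2_G$ is \emph{not} mixing (by Lemma \ref{lemma.mixing.bipartite}), so this is not yet enough. To turn $G$ into a non-bipartite graph while preserving triviality of the even square group, I would form $G'$ by adding a single self-loop at any vertex of $G$. Proposition \ref{proposition: adding self loops} then yields that $\mathcal{E}^\square_{G'}$ is trivial if and only if $\mathcal{E}^\square_G$ is trivial. Since $G'$ has a self-loop, it is not bipartite, and by Lemma \ref{lemma.mixing.bipartite}, $X^2_{G'}$ is topologically mixing.

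The reduction $G \mapsto G'$ is clearly computable, so an algorithm deciding triviality of $\mathcal{E}^\square_{G'}$ for mixing homshifts would decide triviality of $\pi_1^\square(G)$ for the input bipartite graph $G$. This contradicts the known undecidability, proving the proposition.

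The main obstacle I expect is ensuring that the precise form of the undecidability in \cite{gao2018continuous, CGHO25} matches what we need here: namely, that triviality of $\pi_1^\square$ (or equivalently $\mathcal{E}^\square$) is undecidable already for bipartite inputs. If only the general (non-bipartite) version is stated, the extra step of invoking Proposition \ref{proposition: bipartite graphs and stuff} to translate between square group and even square group across the bipartite cover is needed; this translation is exactly what makes the preceding two propositions the right tools to quote.
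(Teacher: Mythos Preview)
Your proposal is correct and follows essentially the same route as the paper: start from the undecidability of triviality of $\pi_1^\square(G)$ for bipartite graphs (which \cite{CGHO25} provides directly, since every finitely presented group arises as the square group of a bipartite graph), then add a self-loop via Proposition~\ref{proposition: adding self loops} to obtain a non-bipartite graph with mixing homshift and the same even-square-group triviality status. Your hedging about whether the cited result already gives bipartite inputs is unnecessary---it does---so the detour through Proposition~\ref{proposition: bipartite graphs and stuff} can be dropped.
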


\begin{proof}
    We know that every finitely presented group $H$ is the square group of a bipartite graph \cite[Theorem \textbf{5.1}]{CGHO25}. As a consequence of Proposition \ref{proposition: adding self loops}, $H$ is the even square group of a graph whose two-dimensional homshift is mixing. This yields the statement, 
    as it is undecidable whether a finitely presented group is trivial or not.
\end{proof}

\subsection{Characterization of square-decomposability\label{section.technical.results}}

In this section, we characterize square-decomposability of walks in terms of pattern-completion (Lemma \ref{lemma.box}) which will be useful in the remainder of the article. 

While the notions of walks, differing by a square and square-decomposability have been introduced for finite graphs, they can easily be extended to infinite graphs. We will thereby not repeat the definitions. For most of this section we will be recalling some past results which will be useful in the later sections.

\begin{notation}
    For a configuration $x$ of a $d$-dimensional homshift $X^d_G$ and $w$ a walk on $\mathbb{Z}^d$ we denote by \notationidx{$x_w$}{the walk traced by a walk $w$ on $\Z^d$ in a configuration $x$ of a $d$-dimensional subshift.} the 
walk $x_{w_0} \ldots x_{w_{l(w)}}$ on $G$. 
\end{notation}

{For every $r,r' \ge 0$, we denote by $\partial \boldsymbol{B}^2(r,r',s,s')$ the set 
\[\left(\llbracket r , s \rrbracket \times \llbracket r' , s' \rrbracket\right) \backslash \left(\llbracket r+1 , s-1 \rrbracket \times \llbracket r'+1 , s'-1 \rrbracket\right).\]
In particular $\partial \boldsymbol{B}^2 (n) = \partial \boldsymbol{B}^2 (n,n,n,n)$ for all $n$.}

\begin{notation}\label{notation.clockwise}
{For all $r,r',s,s'$, we identify any locally admissible pattern $p$ on $\partial \boldsymbol{B}^2(r,r',s,s')$ with the cycle $p_{l} \odot p_{u} \odot p_{r} \odot p_{d}$ obtained by reading it clockwise: \notationidx{$p_{l}$}{the walk corresponding to the left side of a rectangular pattern $p$}  = $p_{(r,r')} \ldots p_{(r,s')}$, \notationidx{$p_{u}$}{the walk corresponding to the up side of a rectangular pattern $p$} = $p_{(r,s')} \ldots p_{(s,s')}$, \notationidx{$p_{r}$}{the walk corresponding to the right side of a rectangular pattern $p$} = $p_{(s,s')} \ldots p_{(s,r')}$ and \notationidx{$p_{d}$}{the walk corresponding to the down side of a rectangular pattern $p$} = $p_{(s,r')} \ldots p_{(r,r')}$.}
\end{notation}

The following is straightforward:

\begin{lemma}\label{lemma.varphi.sqdiff}
Let us consider a configuration $x$ of a homshift $X_G^2$ and $w,w'$ two walks on $\mathbb{Z}^2$ which differ by a square. 
Then $\varphi(x_w)$ and $\varphi(x_{w'})$ are equal or differ by a square.
\end{lemma}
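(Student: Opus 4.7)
My plan is to push the identity $w \star w'^{-1} = u \star s \star u^{-1}$ (which is what it means for $w$ and $w'$ to differ by a square) through the map $\varphi \circ x_{(\cdot)}$, and then to analyze what can happen to the image $\varphi(x_s)$ of the square $s$ inside $G$.

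As a preliminary, I would verify that $\varphi \circ x_{(\cdot)}$ descends to a well-defined operation on the fundamental groupoid of $\mathbb{Z}^2$, i.e.\ that $\varphi(x_p) = \varphi(x_{\varphi(p)})$ for every walk $p$ on $\mathbb{Z}^2$. It suffices to check the case where $p$ contains a single backtrack $aba$: then $x_a x_b x_a$ is itself a cycle of length two in $G$, either with $x_a \neq x_b$, or with $x_a = x_b$ (forced only when $x_a$ carries a self-loop of $G$), and in either case $\varphi$ collapses it to $x_a$, matching the result of removing the backtrack first and then applying $x$. The general case then follows from the order-independence of backtrack removal. Combined with the standard confluence identity $\varphi(p \odot q) = \varphi(\varphi(p) \odot \varphi(q))$, this yields $\varphi(x_{p \star q}) = \varphi(x_p) \star \varphi(x_q)$.

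Applying the latter to the decomposition $w \star w'^{-1} = u \star s \star u^{-1}$ gives
\[
\varphi(x_w) \star \varphi(x_{w'})^{-1} \;=\; \varphi(x_{w \star w'^{-1}}) \;=\; \varphi(x_{u \star s \star u^{-1}}) \;=\; \varphi(x_u) \star \varphi(x_s) \star \varphi(x_u)^{-1}.
\]
Next I would analyze $\varphi(x_s)$. Writing $x_s = a_0 a_1 a_2 a_3 a_0$, a short case check on whether $a_0 = a_2$ or $a_1 = a_3$ shows that $\varphi(x_s)$ is either a square of $G$ (when neither equality holds, so $x_s$ is already non-backtracking) or the trivial walk at $a_0$ (when either does, in which case the backtracks cascade all the way down).

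Feeding this dichotomy back into the displayed equation finishes the proof: if $\varphi(x_s)$ is trivial, the right-hand side is trivial, so $\varphi(x_w) = \varphi(x_{w'})$; otherwise, setting $s' \coloneqq \varphi(x_s)$, the identity reads $\varphi(x_w) \star \varphi(x_{w'})^{-1} = \varphi(x_u) \star s' \star \varphi(x_u)^{-1}$, which is precisely the statement that $\varphi(x_w)$ and $\varphi(x_{w'})$ differ by the square $s'$. The only delicate point I foresee is the preliminary step: the well-definedness of $\varphi \circ x_{(\cdot)}$ on the groupoid requires a careful treatment of self-loops in $G$, since an honest edge of $\mathbb{Z}^2$ can map to a self-loop of $G$ and collapse lengths. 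Once that bookkeeping is done, the remainder is formal manipulation in the free groupoid.
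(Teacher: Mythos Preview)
Your proposal is correct and fills in exactly the argument the paper leaves implicit: the paper states the lemma as ``straightforward'' and omits the proof entirely. Your groupoid-level computation---pushing $w \star w'^{-1} = u \star s \star u^{-1}$ through $\varphi \circ x_{(\cdot)}$ and then case-splitting on whether $\varphi(x_s)$ is a genuine square or collapses to a point---is the natural way to make this precise, and your treatment of the self-loop case in the preliminary step is appropriately careful.
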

Every cycle in $\Z^2$ is square-decomposable. Thus repeated application of Lemma~\ref{lemma.varphi.sqdiff} gives us the following.
\begin{lemma}\label{lemma.border.sqdec}
    Consider an {admissible}
    pattern $p$ on $\boldsymbol{B}^2(r)$ for some $r \ge 0$. The cycle $\varphi(p_{|\partial \boldsymbol{B}^2(r)})$ is square-decomposable.
\end{lemma}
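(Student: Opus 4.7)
My strategy is to transport the square-decomposability of the boundary cycle in $\mathbb Z^2$ into $G$ via $p$, using Lemma \ref{lemma.varphi.sqdiff} as a one-step transport lemma. Let $\gamma$ denote the cycle on $\mathbb Z^2$ that traverses $\partial \boldsymbol{B}^2(r)$ clockwise starting and ending at a chosen corner $\boldsymbol c$. The first (and essentially only nontrivial) step is to check that $\gamma \in \Delta(\mathbb Z^2)[\boldsymbol c]$, i.e.\ that every cycle of the Cayley graph of $\mathbb Z^2$ is square-decomposable. The argument is a planar-region induction on the enclosed area: the region bounded by $\gamma$ tiles by unit $1 \times 1$ grid squares, each of which is a square in the graph-theoretic sense; the base case is a single unit square, and the inductive step splits a larger region along an interior edge, concatenating the resulting decompositions while choosing conjugating walks that stay inside $\boldsymbol B^2(r)$.

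Once $\gamma \in \Delta(\mathbb Z^2)[\boldsymbol c]$, by definition of normal closure we may write
\[
\gamma \;=\; c_1 \star c_2 \star \cdots \star c_k,
\]
where each $c_i = u_i \star s_i \star u_i^{-1}$ is a conjugate of a grid square, with all $u_i$ and $s_i$ supported in $\boldsymbol B^2(r)$. I would then define $\gamma^{(0)} \coloneqq \gamma$ and inductively $\gamma^{(i)} \coloneqq c_i^{-1} \star \gamma^{(i-1)}$, so that $\gamma^{(k)}$ is the trivial walk at $\boldsymbol c$. Since $\gamma^{(i-1)} \star (\gamma^{(i)})^{-1} = c_i = u_i \star s_i \star u_i^{-1}$, the consecutive walks $\gamma^{(i-1)}$ and $\gamma^{(i)}$ differ by a square in the sense of the definition, and $p$ is well-defined along each of them because they remain inside $\boldsymbol B^2(r)$.

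Applying Lemma \ref{lemma.varphi.sqdiff} at each link of this chain, $\varphi(p_{\gamma^{(i-1)}})$ and $\varphi(p_{\gamma^{(i)}})$ are either equal or differ by a square, hence are square-equivalent in $G$. Transitivity of square-equivalence then gives that $\varphi(p_{\gamma^{(0)}}) = \varphi(p_{|\partial \boldsymbol B^2(r)})$ is square-equivalent to $\varphi(p_{\gamma^{(k)}})$, which is just the trivial walk at $p_{\boldsymbol c}$. This is precisely the claimed square-decomposability. The main obstacle is the planar-region induction in the first step (which is geometrically obvious but requires some care to write down with all conjugating walks chosen inside the box); the remainder is pure bookkeeping through the chain of square-equivalences.
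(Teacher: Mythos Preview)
Your proposal is correct and follows the same approach as the paper: the paper's proof is a one-liner noting that every cycle in $\mathbb{Z}^2$ is square-decomposable and then invoking repeated application of Lemma~\ref{lemma.varphi.sqdiff}, which is exactly the chain argument you spell out in detail. Your care about keeping the conjugating walks inside $\boldsymbol{B}^2(r)$ is not needed once one observes that an admissible pattern on a box extends to a full configuration of $X^2_G$, after which Lemma~\ref{lemma.varphi.sqdiff} applies to arbitrary walks in $\mathbb{Z}^2$.
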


An easy adaptation of the proof of~\cite[Lemma \textbf{6.3}]{gangloff2022short} implies the result below. 
{We say that two walks $p$ and $q$ such that $l(p) = l(q)$ are \emph{neighbors} when $p_i$ is a neighbor of $q_i$ for all $i$.} Notice that the definitions of differing by a square differ between our articles, but they are equivalent as shown in \cite[Lemma \textbf{4.17}]{CGHO25}.

\begin{lemma}\label{lemma.intermediate.box}
Let $c,c'$ be two non-backtracking cycles which differ by a square with $l(c) \ge l(c')$, $t$ a backtrack which starts at $c_0$, and $k$ an integer. Then
there exists a sequence of cycles $c^{(0)}, \ldots , c^{(m)}$ such that 
$c^{(0)} = t^k \odot c \odot t^k$, $c^{(m)} = t^{k+(l(c)-l(c'))/2} \odot c' \odot t^k$, and for all $i$, $c^{(i)}$ and $c^{(i+1)}$ are neighbors and 
$c^{(i)}$ begins and ends with $t_0$ (resp. $t_1$) when $i$ is even (resp. odd). 
\end{lemma}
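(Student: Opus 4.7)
The plan is to adapt the argument of \cite[Lemma \textbf{6.3}]{gangloff2022short} to the present setting, which differs mainly in the presence of the backtrack padding $t^k$ on either side of $c$. By the hypothesis that $c$ and $c'$ differ by a square, there exist a walk $w$ starting at $c_0$ and a square $s$ based at $w_{l(w)}$ such that $c \star c'^{-1} = w \star s \star w^{-1}$. This relation is the starting point for building an explicit sequence of neighboring cycles interpolating between $t^k \odot c \odot t^k$ and $t^{k+(l(c)-l(c'))/2} \odot c' \odot t^k$. A first easy check is that the two target cycles have the same length $4k + l(c)$, so that the neighbor relation (which requires equal length) is not ruled out immediately.

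The construction proceeds by a sequence of "parallel shift" moves along the edge $(t_0, t_1)$ of the backtrack $t$. Each move replaces the current cycle $c^{(i)}$ by a cycle $c^{(i+1)}$ of the same length in which every vertex is shifted by one edge; this is exactly what is needed to produce a neighbor in the required sense and also forces the alternation of the starting vertex between $t_0$ and $t_1$. The sequence is organized in two phases. First, repeated parallel shifts, combined with \emph{local square flips} (replacing a two-step subwalk $abc$ by $adc$ when $abcd$ is a square), propagate the relation $c \star c'^{-1} = w \star s \star w^{-1}$ along the cycle, effectively reducing the $c$-portion to $c'$. Second, the $(l(c)-l(c'))/2$ extra backtracks that appear from this reduction migrate into the left padding, turning it from $t^k$ into $t^{k + (l(c) - l(c'))/2}$, while the right padding stays $t^k$. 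The backtrack blocks $t^k$ on both sides serve as a reservoir that lets us perform the shifts without ever needing to cancel backtracks mid-construction, which is exactly what the unreduced version of $w \odot s \odot w^{-1} \odot c'$ requires.

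The main obstacle is technical rather than conceptual: verifying that at every step, the intermediate walk is a valid walk in $G$ (consecutive vertices being adjacent) and that each pair of successive walks is in fact a neighbor pair at every position. This requires case analysis at the interface between the $t^k$ padding and the square $s$, using the fact that $c_0 = t_0 = s_0$ shares a vertex with both $s$ and $t$, and using only edges of $s$, $t$, and of the walk $w$ itself to define all shifts. A careful length accounting must then guarantee that the $(l(c)-l(c'))/2$ surplus backtracks end up on the left and not on the right. The "easy adaptation" referred to in the statement consists precisely of inserting the $t^k$ blocks into the original construction and propagating them consistently through every move; conceptually no new idea is needed beyond the parallel shift / square flip combination already used in \cite[Lemma \textbf{6.3}]{gangloff2022short}.
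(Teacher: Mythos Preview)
Your approach aligns with the paper's own treatment, which gives no explicit proof but simply refers to an easy adaptation of \cite[Lemma \textbf{6.3}]{gangloff2022short}; your sketch of the neighbor-move / square-flip mechanism with the $t^k$ padding acting as a backtrack reservoir is exactly that adaptation. One small slip: the square $s$ is based at $w_{l(w)}$, not at $c_0$, so the equality $s_0 = c_0$ you invoke need not hold in general---but this does not affect the overall strategy.
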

We end the section with an equivalent condition for square-decomposability which will be crucial in the proof of Theorem \ref{theorem: cohomological triviality}.
\begin{lemma}\label{lemma.box}
    A non-backtracking cycle $\gamma$ on $G$ is square-decomposable if, and only if, there exist $k,n \in \mathbb{N}$, $t$ a cycle of length 2 on $\gamma_0$ and a {locally admissible} pattern $R$ with support $\llbracket 0 , \nu \rrbracket \times \llbracket 0 , 2n\rrbracket$, where $\nu\coloneqq l(\gamma) + 4k$, such that: \[{R_d = t^{-k} \odot \gamma^{-1} \odot t^{-k} \quad R_u = t^{\nu/2} \quad R_l = t^n \quad R_r = t^{-n}}\] 
\end{lemma}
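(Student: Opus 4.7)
My plan is to handle the two directions of the equivalence separately. The $(\Leftarrow)$ direction should follow quickly from Lemma \ref{lemma.border.sqdec}: I would write out the boundary of $R$ read clockwise as the concatenation $R_l \odot R_u \odot R_r \odot R_d = t^n \odot t^{\nu/2} \odot t^{-n} \odot t^{-k} \odot \gamma^{-1} \odot t^{-k}$. Since $t$ is a length-two cycle it is literally a backtrack, so $\varphi(t^m)$ collapses to the trivial walk at $t_0 = \gamma_0$ for every $m$. Using that $\gamma$ is non-backtracking and that no new backtracks are created at the interfaces with the $\gamma^{-1}$ block, the reduction $\varphi$ of the whole boundary equals $\gamma^{-1}$. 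Applying Lemma \ref{lemma.border.sqdec} to $R$ then yields that this reduction is square-decomposable, and hence so is $\gamma$.

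For the $(\Rightarrow)$ direction I would build $R$ row by row. Square-decomposability of $\gamma$, together with the fact that $\Delta(G)[\gamma_0]$ is generated by conjugates of squares, provides a finite sequence of non-backtracking cycles $\gamma = \gamma^{(0)}, \gamma^{(1)}, \ldots, \gamma^{(M)}$ where $\gamma^{(M)}$ is the trivial cycle at $\gamma_0$ and consecutive cycles differ by a single square. I would pick a backtrack $t$ starting at $\gamma_0$ (which exists since $G$ is connected and $\gamma_0$ has at least one neighbor) and choose $k$ large enough that $\nu \coloneqq 4k + l(\gamma)$ dominates every intermediate length $l(\gamma^{(i)})$.

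For each reduction step I would then invoke Lemma \ref{lemma.intermediate.box} (using its left--right flipped variant whenever the step increases the length) to obtain a rectangular strip of locally admissible pattern whose bottom and top rows are padded versions of $\gamma^{(i)}$ and $\gamma^{(i+1)}$ of common length $\nu$, with intermediate rows being pointwise neighbors in $G$. Stacking these strips vertically produces the required pattern $R$. The left and right columns automatically acquire the form $t^n$ and $t^{-n}$, where $2n$ is the total number of rows, because each row begins and ends with $t_0$ or $t_1$ depending on the parity of its row index. The bottom of the stack reads $t^k \odot \gamma \odot t^k$, matching $R_d$ in the reverse orientation; after the final reduction the top row consists entirely of $t$-backtracks of length $\nu$, namely $t^{\nu/2}$, matching $R_u$.

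The main technical obstacle lies in the chaining step: Lemma \ref{lemma.intermediate.box} starts from symmetric padding $(k,k)$ around a cycle but ends with asymmetric padding $(k + \Delta, k)$, where $\Delta = (l(\gamma^{(i)}) - l(\gamma^{(i+1)}))/2$, so the top row of one strip does not literally agree with the bottom row of the next. I would overcome this either by alternating the left--right orientation of successive applications of the lemma, so that the excess padding cancels in pairs, or by inserting short rebalancing strips that shift the padding across a fixed cycle without altering its content. Keeping careful track of parities so that each row begins with $t_0$ or $t_1$ as required by Lemma \ref{lemma.intermediate.box}, and verifying that the total excess padding telescopes into exactly the block $t^{\nu/2}$ at the top, is the main bookkeeping challenge of the construction.
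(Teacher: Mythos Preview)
Your approach is essentially the same as the paper's: for $(\Leftarrow)$ you apply Lemma~\ref{lemma.border.sqdec} to $\partial R$ and cancel the $t$-padding, and for $(\Rightarrow)$ you take a chain $\gamma=\gamma^{(0)},\dots,\gamma^{(M)}$ of cycles differing by a square, use Lemma~\ref{lemma.intermediate.box} to produce a strip between each consecutive pair, and stack the strips. One small comment on your $(\Leftarrow)$ argument: you do not actually need to argue that ``no new backtracks are created at the interfaces with the $\gamma^{-1}$ block''---they may well be created if $t_1\in\{\gamma_1,\gamma_{l(\gamma)-1}\}$---but it is irrelevant, since you are computing in $\pi_1(G)[\gamma_0]$ where every power of $t$ is trivial; the paper simply observes $\varphi(R_l\odot R_u\odot R_r)$ is trivial and concludes.

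Regarding the padding-asymmetry obstacle you flag in $(\Rightarrow)$: you have in fact been \emph{more} careful than the paper. The paper's proof asserts that each strip $R^{(i)}$ has symmetric padding $t^{k_i}$ on both sides at the bottom and $t^{k_{i+1}}$ on both sides at the top, so that consecutive strips glue directly; but as you noticed, Lemma~\ref{lemma.intermediate.box} as stated only delivers $t^{k_i+\Delta}\odot\gamma^{(i+1)}\odot t^{k_i}$ on the top. The paper leaves this point implicit (the lemma is described as an ``easy adaptation'' of a result from \cite{gangloff2022short}, and presumably a version yielding symmetric padding is intended). Your proposed fixes---alternating orientation or inserting rebalancing strips---are both valid ways to close this gap, and the resulting top row is indeed $t^{\nu/2}$ once $\gamma^{(M)}$ is trivial, regardless of how the padding is distributed.
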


\begin{remark}\label{remark:any_t}
    If the right-hand side of this equivalence holds for some $\gamma, k,n$ and $t$, we may replace $k,n$ with any larger integers and $t$ with any cycle of length 2 on $\gamma_0$. 
\end{remark}

\begin{proof}
    $(\Rightarrow)$ Since $\gamma$ is square-decomposable, there exists a sequence of cycles $(\gamma^{(i)})_{i=0\ldots m}$ such that for all $i$, $\gamma^{(i)}$ and $\gamma^{(i+1)}$ differ by a square, $\gamma = \gamma^{(0)}$ and $\gamma^{(m)}$ has length 0. Since all these cycles have the same endpoints, applying Lemma~\ref{lemma.intermediate.box} for $c = \gamma^{(i)}$ and $c' = \gamma^{(i+1)}$ for all $i$ provides {a rectangular pattern $R^{(i)}$ such that $R^{(i)}_d = t^{-k_i} \odot \left(\gamma^{(i)}\right)^{-1} \odot t^{-k_i}, R^{(i)}_u = t^{-k_{i+1}} \odot \left(\gamma^{(i+1)}\right)^{-1} \odot t^{-k_{i+1}}$, where $k_i = \frac 12(\nu-|\gamma^{(i)}|)$, and $ R^{(i)}_l = t^{n_i}$ and $R^{(i)}_r = t^{-n_i}$ for some $n_i$. The pattern $R$ is obtained by stacking patterns $R^{(i)}$ on top of each other.}
    
    $(\Leftarrow)$ 
    {By Lemma \ref{lemma.border.sqdec}, we  have that $\varphi(\partial R)\in \Delta(G)[\gamma_0]$. Since $\varphi(R_l\odot R_u\odot R_r) = \varphi(t^n \odot t^{\nu/2} \odot t^{-n})$ which is trivial, 
we have that $\varphi(\gamma)\in \Delta(G)[\gamma_0]$. This completes the proof.}
\end{proof}   

\section{ The square group cocycle\label{section.sq.cocycle}}

 In this section, for every $G$, we define a cocycle with values in $\pi_1^{\square}(G)$ that we call square group cocycle (Section \ref{section.definition.sq.cocycle}). Using this cocycle, we prove direction $(\Rightarrow)$ in the statement of Theorem \ref{theorem: cohomological triviality} (Section \ref{section.non.trivial.square.group}).

\subsection{Definition\label{section.definition.sq.cocycle}}

\begin{notation}
    For a graph $G$, a spanning tree $T$ of $G$ and $a,a' \in G$, we denote by \notationidx{$p_T^a(a')$}{the unique simple walk from $a$ to $a'$ in a spanning tree $T$ of a graph.} the unique simple walk on $T$ from $a$ to $a'$.
\end{notation}

\begin{notation}Given a graph $G$ and any walk $w$ on $G$, we denote by \notationidx{$p_G^{\square}(w)$}{The equivalence class of a walk $w$ for the square-equivalence relation} the equivalence class of $w$ for the square-equivalence relation.\end{notation}

For $x\in X^d_G$ and walk $p$ from $\boldsymbol{0}$ to $\boldsymbol{n}$ in $\Z^d$, $x_p$ is a walk on the graph. Note that all choices of walks $p$ from $\boldsymbol{0}$ to $\boldsymbol{n}$ lead to square equivalent walks $x_p$. Furthermore, they satisfy the following natural cocyclic property: given a walk $p$ from $\boldsymbol{0}$ to $\boldsymbol{n}$ and $q$ from $\boldsymbol{n}$ to $\boldsymbol{n+m}$, we have that $x_p\odot x_q=x_{p\odot q}$. This leads to a natural definition of a cocycle on the homshift $X^d_G$.
\begin{definition}\label{def.sqgrp.cocycle}
We denote by \notationidx{$c_G^{T,a}$}{the square-group cocycle of $X^{d}_G$ corresponding to the rooted spanning tree $(T,a)$ of the graph $G$.} the map $\mathbb{Z}^{d} \times X_G^{d}  \rightarrow \pi_1^{\square}(G)[a]$ 
defined as follows.
For all $\boldsymbol{n} \in \mathbb{Z}^d$, $x \in X^d_G$, and any walk $p$ in $\mathbb{Z}^d$ from $\boldsymbol{0}$ to $\boldsymbol{n}$,
\[c_G^{T,a}(\boldsymbol{n},x) \coloneqq \left(p_G^{\square}(p_T^a(x_0) \star x_p \star (p_T^a(x_{\boldsymbol{n}}))^{-1})\right)^{-1}.\]
This is well-defined, as all walks from $0$ to $p$ are square-equivalent and $p_G^{\square}$ is by definition constant on a square-equivalence class.
\end{definition}
Note that square-equivalence classes of the walks $x_p$ are elements of the square cover, which is not a group. However, as mentioned in \cite{CGHO25}, the square cover can be decomposed into copies of a square-decomposable graph on which the square group acts by permuting these copies, and the square cocycle keeps track of which copy the current walk belongs to.

\begin{remark}
    Some readers may find the inverse unnatural in this definition. This is necessary, due to the order of the product in the cocycle equation, as we follow K.Schmidt's definition of a cocycle \cite{Schmidt95}. We will soon prove that $c_G^{T,a}$ indeed defines a cocycle.
\end{remark}

\begin{lemma}
The map $c_G^{T,a}$ is a continuous cocycle on $X_G^d$ with values in the square group of $G$. We call such a cocycle \textbf{the square-group cocycle}.
\end{lemma}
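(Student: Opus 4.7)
My plan is to verify the three required properties in turn: well-definedness (already observed at the point of the definition, since any two walks from $\boldsymbol 0$ to $\boldsymbol n$ in $\Z^d$ are square-equivalent and $p_G^\square$ is constant on square-equivalence classes), continuity of $x \mapsto c_G^{T,a}(\boldsymbol n, x)$ for each fixed $\boldsymbol n$, and the cocycle equation \eqref{eq;cocycle-def}.

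For continuity I would exploit the freedom in the choice of the walk $p$ in Definition~\ref{def.sqgrp.cocycle}. Picking, say, the axis-aligned walk from $\boldsymbol 0$ to $\boldsymbol n$, the value $c_G^{T,a}(\boldsymbol n, x)$ depends only on the restriction of $x$ to the finitely many vertices visited by $p$. Hence the map is locally constant in the product topology, and in particular continuous.

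The main content is the cocycle equation. I would fix $\boldsymbol m, \boldsymbol n \in \Z^d$ and $x \in X_G^d$, and choose walks $p$ from $\boldsymbol 0$ to $\boldsymbol n$ and $q$ from $\boldsymbol 0$ to $\boldsymbol m$ in $\Z^d$. The translated walk $\boldsymbol n + q$ runs from $\boldsymbol n$ to $\boldsymbol n + \boldsymbol m$, and the identity $x_{\boldsymbol n + q} = (\sigma^{\boldsymbol n} x)_q$ lets me rewrite both factors of the product $c_G^{T,a}(\boldsymbol m, \sigma^{\boldsymbol n} x) \cdot c_G^{T,a}(\boldsymbol n, x)$ in terms of $x$ alone. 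Expanding using $(u \star v)^{-1} = v^{-1} \star u^{-1}$, the middle factor $(p_T^a(x_{\boldsymbol n}))^{-1} \star p_T^a(x_{\boldsymbol n})$ collapses under $\star$, and what remains rearranges to
\[ \left[\, p_T^a(x_0) \star x_{p \odot (\boldsymbol n + q)} \star (p_T^a(x_{\boldsymbol n + \boldsymbol m}))^{-1} \,\right]^{-1}, \]
which is exactly $c_G^{T,a}(\boldsymbol n + \boldsymbol m, x)$ computed via the concatenated walk $p \odot (\boldsymbol n + q)$ from $\boldsymbol 0$ to $\boldsymbol n + \boldsymbol m$. The second equality in \eqref{eq;cocycle-def} then follows by swapping the roles of $\boldsymbol m$ and $\boldsymbol n$.

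The only thing to watch is that these manipulations genuinely take place inside the group $\pi_1^\square(G)[a]$: all the walks appearing as arguments of $p_G^\square$ must be cycles at $a$, so that $\star$-inverses and products are meaningful; and the identification $x_p \star x_{\boldsymbol n + q} = \varphi(x_{p \odot (\boldsymbol n + q)})$ must be compatible with passing to square-equivalence classes, which is already guaranteed by the well-definedness of $p_G^\square$. Neither is a genuine obstacle beyond careful bookkeeping of the notation.
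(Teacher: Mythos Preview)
Your proposal is correct and follows essentially the same route as the paper: continuity is dismissed as immediate (the paper says ``straightforward from the definition''; you spell out local constancy), and the cocycle equation is checked by expanding the product, using $x_{\boldsymbol n + q} = (\sigma^{\boldsymbol n} x)_q$, and cancelling the middle $(p_T^a(x_{\boldsymbol n}))^{-1} \star p_T^a(x_{\boldsymbol n})$. The only cosmetic difference is that the paper writes the computation for the inverses $(c_G^{T,a})^{-1}$ (which strips off the outer inverse in the definition) whereas you work directly with $c_G^{T,a}$; the content is identical.
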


\begin{proof}
     Continuity is straightforward from the definition. Let us prove that $c_G^{T,a}$ is a cocycle. For this, it is sufficient to see that for all $\boldsymbol{m},\boldsymbol{n} \in \Z^d$ and all $x \in X_G^d$, 
    \[\left(c_G^{T,a}(\boldsymbol{n}+\boldsymbol{m},x)\right)^{-1}=\left(c_G^{T,a}(\boldsymbol{n},x)\right)^{-1} \star \left(c_G^{T,a}( \boldsymbol{m},\sigma^{\boldsymbol{n}}(x))\right)^{-1}.\]
    The right-hand side of this equation is equal to: 
    \[p_G^{\square}(p_T^a(x_{\boldsymbol{0}}) \star x_p \star (p_T^a(x_{\boldsymbol{n}}))^{-1}) \star p_G^{\square}(p_T^a(x_{\boldsymbol{n}}) \star x_q \star (p_T^a(x_{\boldsymbol{n}+\boldsymbol{m}}))^{-1})\]
     where $p$ is a walk in $\mathbb{Z}^d$ from $\boldsymbol{0}$ to $\boldsymbol{n}$, and $q$ is a walk in $\mathbb{Z}^d$ from $\boldsymbol{n}$ to $\boldsymbol{n+m}$ (this is equivalent to $
x_q=
(\sigma^{\boldsymbol{n}}(x))_{\hat{q}}$ for some walk $\hat{q}$ in $\mathbb{Z}^d$ from $\boldsymbol{0}$ to $\boldsymbol{m}$).
    This can be simplified into: 
    \[\begin{array}{r@{\,=\,}l}
         p_G^{\square}(p_T^a(x_{\boldsymbol{0}}) \star x_p \star x_q \star (p_T^a(x_{\boldsymbol{n}+\boldsymbol{m}}))^{-1}) &
     p_G^{\square}(p_T^a(x_{\boldsymbol{0}}) \star {(x_{p} \odot x_{q})} \star (p_T^a(x_{\boldsymbol{n}+\boldsymbol{m}}))^{-1}) \\
    & p_G^{\square}(p_T^a(x_{\boldsymbol{0}}) \star x_{p \odot q} \star (p_T^a(x_{\boldsymbol{n}+\boldsymbol{m}}))^{-1}),
    \end{array}\]
    which corresponds to the definition of the left-hand side.
\end{proof}

Note that the cocycle $c_G^{T,a}$ depends on the choice of $T$ and $a$. However, different choices yield continuously cohomologous cocycles:

\begin{proposition}
    All square group cocycles of $X^d_G$ are continuously cohomologous.
\end{proposition}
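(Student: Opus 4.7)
The plan is to handle separately the two sources of variation in the definition of $c_G^{T,a}$: the choice of base point $a$ and the choice of spanning tree $T$. The base-point dependence is handled by the natural conjugation isomorphism between the square groups, which turns the two cocycles into \emph{identical} cocycles in the sense of the paper; the spanning-tree dependence is handled by an explicit $1$-block transfer function.

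First, fix a spanning tree $T$ and consider two base points $a, a'$. Let $q \coloneqq p_T^a(a')$. Since $T$ is a tree, the map $\beta \colon \pi_1^\square(G)[a] \to \pi_1^\square(G)[a']$ defined by $\beta([\gamma]) = [q^{-1} \star \gamma \star q]$ is a group isomorphism. The key computational observation is that, since the simple walk in $T$ between any two vertices is unique, one has $p_T^{a'}(v) = q^{-1} \star p_T^a(v)$ for every $v \in V_G$. Plugging this into the definition of $c_G^{T,a'}$ yields, for every $\boldsymbol{n}\in\Z^d$ and $x \in X_G^d$,
\[
c_G^{T,a'}(\boldsymbol{n},x) \;=\; \beta\bigl(c_G^{T,a}(\boldsymbol{n},x)\bigr),
\]
so by the convention in the definition of continuous cohomology, $c_G^{T,a}$ and $c_G^{T,a'}$ are identical and in particular (taking the trivial transfer function after applying $\beta$) continuously cohomologous.

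Next, fix a base point $a$ and consider two spanning trees $T, T'$. Both cocycles $c_G^{T,a}$ and $c_G^{T',a}$ take values in $\pi_1^\square(G)[a]$. Define
\[
b \colon X_G^d \to \pi_1^\square(G)[a], \qquad b(x) \coloneqq p_G^\square\bigl(p_{T'}^a(x_{\boldsymbol{0}}) \star (p_T^a(x_{\boldsymbol{0}}))^{-1}\bigr).
\]
This is a $1$-block map, hence continuous. For the cohomology equation, I would let $u = x_{\boldsymbol{0}}$, $v = x_{\boldsymbol{n}}$, and let $p$ be any walk in $\Z^d$ from $\boldsymbol{0}$ to $\boldsymbol{n}$, so that $c_G^{T,a}(\boldsymbol{n},x)$ is represented by $p_T^a(v) \star x_p^{-1} \star (p_T^a(u))^{-1}$ (and analogously for $T'$). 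The verification of
\[
c_G^{T,a}(\boldsymbol{n},x) \;=\; b\bigl(\sigma^{\boldsymbol{n}}(x)\bigr)^{-1}\, c_G^{T',a}(\boldsymbol{n},x)\, b(x)
\]
is then a telescoping cancellation in the free-like quotient: the middle factors $p_{T'}^a(u)$ and $p_{T'}^a(v)$ cancel with their inverses coming from $b$ via the $\star$ operation, leaving precisely the representative of $c_G^{T,a}(\boldsymbol{n},x)$.

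Finally, for arbitrary pairs $(T,a)$ and $(T',a')$, I would first use the first step to replace $c_G^{T',a'}$ by the identical cocycle $c_G^{T',a}$ via the isomorphism $\beta$, and then apply the transfer function of the second step to conclude that $c_G^{T,a}$ and $c_G^{T',a}$ are continuously cohomologous in $\pi_1^\square(G)[a]$. The argument is essentially algebraic bookkeeping; the only point requiring care is the inverse appearing in Definition~\ref{def.sqgrp.cocycle} together with the non-abelianness of $\pi_1^\square(G)[a]$, which forces the conjugation-like structure of both $\beta$ and the cohomology equation. I expect this notational sensitivity, rather than any conceptual difficulty, to be the main obstacle.
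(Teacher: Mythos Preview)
Your proposal is correct and follows essentially the same approach as the paper: first absorbing the base-point change into the conjugation isomorphism $\beta$ (so the two cocycles are identical), and then handling the change of spanning tree via the $1$-block transfer function built from $p_{T}^a(x_{\boldsymbol{0}})$ and $p_{T'}^a(x_{\boldsymbol{0}})$. Your transfer function is the inverse of the paper's (equivalently, the roles of $T$ and $T'$ are swapped in the cohomology equation), but the telescoping verification is the same.
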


\begin{proof}
The cocycles $c_G^{T,a}$ and $c_G^{T,a'}$, for two choices of base points $a,a'\in G$, are the same cocycle up to isomorphism: the morphism $\beta : \pi_1(G)[a] \to \pi_1(G)[a']$ given by $c \mapsto p_T^{a'}(a)\star c \star p_T^a(a')$ is an isomorphism and square-equivalent cycles have square-equivalent images, so it factorizes onto a morphism $\overline\beta : \pi_1^\square(G)[a] \to \pi_1^\square(G)[a']$ and it is straightforward to check that $c_G^{T,a'} = \overline \beta\circ c_G^{T,a}$.

Now consider cocycles $c_G^{T,a}$ and $c_G^{T',a}$ for two choices $T$ and $T'$ of spanning trees. Define a transfer function $b_T^{T'} : X^d_G \to \pi^\square_1 (G)[a]$ by \[b_T^{T'} : x \mapsto p_G^\square\left(p_{T}^a(x_0) \star (p_{T'}^a(x_0))^{-1}\right)\]
which is clearly continuous as it depends only on $x_0$. For any $x\in X^d_G$ and $\boldsymbol{n}\in \Z^d$, we have:
\begin{align*}
&b_T^{T'}(\sigma^{\boldsymbol{n}}(x))^{-1} \star c_G^{T,a}(\boldsymbol{n},x) \star b_T^{T'}(x)\\
&=p_G^\square\left( p_{T'}^a(x_{\boldsymbol{n}})\star p_T^a(x_{\boldsymbol{n}})^{-1} \star p_T^a(x_{\boldsymbol{n}}) \star x_p^{-1} \star p_T^a(x_0)^{-1}  \star p_T^a(x_0) \star (p_{T'}^a(x_0))^{-1}\right) 
\\
& =p_G^\square\left( p_{T'}^a(x_{\boldsymbol{n}})\star x_p^{-1} \star (p_{T'}^a(x_0))^{-1}\right)\\
& =  c_G^{T',a}(\boldsymbol{n},x)
\end{align*}
We have proved that $c_G^{T,a}$ and $c_G^{T',a}$ are continuously cohomologous.
\end{proof}

\subsection[Proof of Theorem 1.1 \texorpdfstring{$ (\Rightarrow)$}{(⇒)}]{Proof of Theorem \ref{theorem: cohomological triviality} ($\Rightarrow$)\label{section.non.trivial.square.group}}

\begin{definition}
    {For all $t_0,t_1$ neighbors in $G$, we call $d$-dimensional $(t_0,t_1)$-\textbf{chessboard configuration} the configuration with value $t_0$ over $\{\boldsymbol{n} \in \mathbb{Z}^d : |\boldsymbol{n}|_1 \in 2 \mathbb{Z}\}$ and $t_1$ over $\{\boldsymbol{n} \in \mathbb{Z}^d : |\boldsymbol{n}|_1 \in 2 \mathbb{Z}+1\}$.}
\end{definition}

\begin{proposition}\label{proposition.non.trivial.sqgrp.cocycle}
   If the even square group of $G$ is nontrivial then none of the square-group cocycles of $X^d_G$ are trivial.
\end{proposition}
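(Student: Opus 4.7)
The plan is to argue by contradiction. Assume $c_G^{T,a}$ is cohomologous to a group homomorphism $c' : \Z^d \to \pi_1^\square(G)[a]$ via a continuous transfer function $b$, and fix a nontrivial class $[\gamma] \in \mathcal{E}_G^{\square}[a]$ represented by an even non-backtracking cycle $\gamma=\gamma_0\gamma_1\cdots\gamma_{2m}$ at $a$. My idea is to exhibit two $\boldsymbol{N}$-periodic configurations of $X^d_G$, for $\boldsymbol{N} := (2m,0,\ldots,0)$, on which the square-group cocycle evaluates to distinct elements: $1$ on one and $[\gamma]^{-1} \neq 1$ on the other. Since both values must lie in the conjugacy class of $c'(\boldsymbol{N})$ and $1$ has trivial conjugacy class, this produces the desired contradiction.

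The first configuration, which I will call the \emph{diagonal configuration} associated with $\gamma$, is defined by
\[x^\gamma_{\boldsymbol{n}} := \gamma_{|\boldsymbol{n}|_1 \bmod 2m}.\]
Because $|\boldsymbol{n}|_1$ changes by $\pm 1$ across any edge of $\Z^d$, and consecutive vertices of $\gamma$ are adjacent in $G$, the map $x^\gamma$ is a graph homomorphism, hence lies in $X^d_G$. It is $\boldsymbol{N}$-periodic (as $2m \equiv 0 \pmod{2m}$) with $x^\gamma_{\boldsymbol{0}} = x^\gamma_{\boldsymbol{N}} = a$. Evaluating Definition~\ref{def.sqgrp.cocycle} along the straight horizontal walk $p$ from $\boldsymbol{0}$ to $\boldsymbol{N}$, the trace $x^\gamma_p$ is exactly $\gamma$ and the tree-path factors $p_T^a(a)$ are trivial, so I expect $c_G^{T,a}(\boldsymbol{N}, x^\gamma) = p_G^\square(\gamma)^{-1} = [\gamma]^{-1}$, which is nontrivial in $\pi_1^\square(G)[a]$ because $\mathcal{E}_G^\square[a]$ embeds into it.

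The second configuration will be an $(a,b)$-chessboard $y$, where $b$ is any neighbor of $a$ in $G$ (which exists since $G$ is connected and has more than one vertex). It is again $\boldsymbol{N}$-periodic with $y_{\boldsymbol{0}} = y_{\boldsymbol{N}} = a$, and along the same walk $p$ the trace reads $abab\cdots a$, which collapses under $\varphi$ to the trivial walk at $a$. Hence $c_G^{T,a}(\boldsymbol{N}, y) = 1$.

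Finally, cohomological triviality yields
\[c_G^{T,a}(\boldsymbol{N}, z) = b(z)^{-1} \star c'(\boldsymbol{N}) \star b(z)\]
for every $\boldsymbol{N}$-periodic $z$, so both cocycle values above are conjugate to $c'(\boldsymbol{N})$. Plugging in $z = y$ forces $c'(\boldsymbol{N}) = 1$; plugging in $z = x^\gamma$ then forces $[\gamma]^{-1} = 1$, contradicting the choice of $\gamma$. The only step requiring any real care will be verifying the cocycle value on the diagonal configuration directly from the formal definition; the other verifications are essentially bookkeeping.
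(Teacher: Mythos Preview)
Your proof is correct and follows essentially the same approach as the paper: construct two configurations periodic under $\boldsymbol{N}=l(\gamma)\boldsymbol{e}^1$ (a chessboard on which the cocycle is trivial and a ``diagonal'' configuration tracing out $\gamma$ on which it equals $[\gamma]^{-1}$), then use periodicity to force $c'(\boldsymbol{N})$ to be conjugate to both values. Your diagonal configuration $x^\gamma_{\boldsymbol{n}}=\gamma_{|\boldsymbol{n}|_1 \bmod 2m}$ is in fact a slightly cleaner all-$d$ version of the paper's configuration $y$, which is built first in dimension two and then extended by shifted copies along the remaining coordinates.
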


\begin{proof}
Fix a spanning tree $T$ of $G$, $a \in G$, and $\gamma$ a non-backtracking cycle on $G$ of even length beginning at $a$ such that $p_G^{\square}({\gamma}) \neq 1_{\pi_1^{\square}(G)}$.
Denote by $x$
the $d$-dimensional {$(\gamma_0,\gamma_1)$-chessboard configuration.
Consider the pattern $p$ on $\llbracket 0, l(\gamma)\llbracket ^2$ given by
        \[\begin{array}{cccc} \gamma_{l(\gamma) - 1} & \gamma_ 0 & \cdots & \gamma_{l(\gamma)-2}\\\vdots & \vdots & \ddots & \vdots \\
    \gamma_1 & \gamma_2 & \ldots & \gamma_{0}\\
    \gamma_0 & \gamma_1 & \ldots & \gamma_{l(\gamma)-1}\end{array}.\]
    Let $y'\in X^2_G$ be the configuration given by $y'_{\boldsymbol u + \llbracket 0, l(\gamma)\llbracket ^2}=p$ for all $\boldsymbol{u}\in l(\gamma)\Z^2$. For $d=2$, set $y \coloneqq y'$. For $d>2$, let $y\in X^d_G$ be given by 
    \[y_{\Z^2\times \{\boldsymbol{v}\}}=\begin{cases}
        y'&\text{ if }|\boldsymbol{v}|_1 \text{ is even}\\
        \sigma^{\boldsymbol{e}_1}(y')&\text{ if }|\boldsymbol{v}|_1 \text{ is odd}.
        \end{cases}\]

    }
    Let us assume for the sake of contradiction that $c_G^{T,a}$ is trivial, meaning that there exists a group homomorphism $\theta$ and a continuous function $b : X^d_G \rightarrow \pi_1^{\square}(G)[a]$ such that for all $\boldsymbol{n} \in \mathbb{Z}^d$ and $x \in X^d_G$, 
    $c_G^{T,a}(\boldsymbol{n},x) = b(\sigma^{\boldsymbol{n}}(x))^{-1} \theta(\boldsymbol{n}) b(x)$. 
    Taking $\boldsymbol{n} = l(\gamma)\boldsymbol{e}^1$, 
    we have:
    \[c_G^{T,a}(l(\gamma)\boldsymbol{e}^1,x) = b(\sigma^{l(\gamma)\boldsymbol{e}^1}(x))^{-1} \theta(l(\gamma)\boldsymbol{e}^1) b(x).\]

    Since $l(\gamma)$ is even, we have that $\sigma^{l(\gamma)\boldsymbol{e}^1}(x) = x$ and thus $b(\sigma^{l(\gamma)\boldsymbol{e}^1}(x)) = b(x)$. Furthermore, since the cycle $\varphi (x_{0\boldsymbol{e}^1} x_{1\boldsymbol{e}^1} \ldots x_{l(\gamma)\boldsymbol{e}^1})$ is trivial, by definition of $c_G^{T,a}$, we get $c_G^{T,a}(l(\gamma)\boldsymbol{e}^1,x) = 1_{\pi_1^\square(G)}$. We thus get $\theta(l(\gamma)\boldsymbol{e}^1) = 1_{\pi_1^\square(G)}$.
     
    The same reasoning for $y$ instead of $x$, using the fact that $\sigma^{l(\gamma)\boldsymbol{e^1}}(y) = y$, leads to:
    \begin{align*}
    (p_G^{\square}(\gamma))^{-1} = c_G^{T,a}(l(\gamma)\boldsymbol{e}^1,y)& =
    b(\sigma^{l(\gamma)\boldsymbol{e}^1}(y))^{-1} \theta(l(\gamma)\boldsymbol{e}^1) b(y)\\
    &=b(y)^{-1}\theta(l(\gamma)\boldsymbol{e}^1)b(y).\end{align*}
    We have seen that $\theta(l(\gamma)\boldsymbol{e}^1) = 1_{\pi_1^\square(G)}$, hence $p_G^{\square}(\gamma) = 1_{\pi_1^\square(G)}$.
    This contradiction concludes the proof.
\end{proof}

\section[Proof of Theorem 1.1 \texorpdfstring{$ (\Leftarrow)$}{(<=)}, dimension two]{Proof of Theorem \ref{theorem: cohomological triviality} ($\Leftarrow$), dimension two\label{section.cocycle.infinite}}

In this section, we prove direction $(\Leftarrow)$ 
in the statement of Theorem \ref{theorem: cohomological triviality} (Section \ref{section.proof.equivalence}), after introducing the notions of Gibbs equivalence and strip-gluing property (Section \ref{section.specification}).

\subsection{Gibbs equivalence and strip-gluing property\label{section.specification}}

For a two-dimensional subshift $X$, we denote by \notationidx{$\Delta_X$}{Gibbs equivalence relation of a subshift $X$} the subset of $X \times X$ defined by $(x,x') \in \Delta_X$ when $x$ and $x'$ differ only on finitely many positions in $\mathbb{Z}^2$. We also denote by \notationidx{$\Delta_X(x)$}{the equivalence class of $x$ for the relation $\Delta_X$} the equivalence class of $x$ for this relation. The relation $\Delta_X$ is referred to as \textbf{Gibbs equivalence}.

\subsubsection*{Strip-gluing property}

 The following definition is inspired by the specification property (Definition \textbf{2.2}) in \cite{Schmidt95}.

\begin{definition}\label{def:stripgluing}
    We say that $X$ has the \textbf{strip-gluing property} relative to $x'' \in X$ when for all $r \ge 0$, $x,x' \in \Delta_X(x'')$ which coincide on $\boldsymbol{B}^2(r)$, there exists some $y \in \Delta_X(x'')$ which coincides with $x$ on $\llbracket -r , {+\infty}\llbracket \times \llbracket -r , r \rrbracket $ and with $x'$ on $\rrbracket {-\infty} , r\rrbracket \times \llbracket -r , r \rrbracket$.
\end{definition}

\begin{remark}
    In the previous definition, it would appear natural for the property to apply to strips in any direction and to relax the requirement to hold only for $r$ large enough; in the case of homshifts this makes no difference.
\end{remark}

\begin{lemma}\label{lemma.specification}
    If the even square group of $G$ is trivial, $X_G^2$ has the strip-gluing property relative to the $(t_0,t_1)$-chessboard configuration for any two neighbors $t_0,t_1$ in $G$.  
\end{lemma}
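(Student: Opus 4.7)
The plan is to construct $y$ in two steps: first, glue $x$ and $x'$ along the strip $S = \mathbb{Z} \times \llbracket -r, r\rrbracket$; second, extend the resulting partial configuration to all of $\mathbb{Z}^2$ via Lemma \ref{lemma.box}, using the triviality of $\mathcal{E}^{\square}_G$ as the key combinatorial input. For the first step, I define $y_{\boldsymbol{n}} = x_{\boldsymbol{n}}$ for $\boldsymbol{n} \in \llbracket -r, +\infty\llbracket \times \llbracket -r, r\rrbracket$ and $y_{\boldsymbol{n}} = x'_{\boldsymbol{n}}$ for $\boldsymbol{n} \in \rrbracket -\infty, r\rrbracket \times \llbracket -r, r\rrbracket$; these agree on the overlap $\boldsymbol{B}^2(r)$ by hypothesis. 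Choosing $N \ge r$ large enough that $x, x'$ both coincide with the chessboard $x''$ outside $\boldsymbol{B}^2(N)$, the partial configuration $y|_S$ coincides with the chessboard outside the bounded rectangle $\llbracket -N, N\rrbracket \times \llbracket -r, r\rrbracket$.

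For the second step, I extend $y$ above the strip (the extension below is symmetric). For $L \ge N$ chosen with $L \equiv r \pmod 2$, the walk $\gamma \coloneqq y_{(-L, r)}\,y_{(-L+1, r)} \cdots y_{(L, r)}$ is a closed walk at the chessboard vertex $t_0 \coloneqq x''_{(-L, r)}$ of even length $2L$. Since $\mathcal{E}^{\square}_G$ is trivial, $\varphi(\gamma) \in \Delta(G)[t_0]$, so $\gamma$ is square-decomposable. Applying Lemma \ref{lemma.box} with $t = t_0 t_1 t_0$ the length-2 chessboard cycle at $t_0$, I obtain a rectangular pattern $R$ with support $\llbracket 0, \nu \rrbracket \times \llbracket 0, 2n \rrbracket$ (for some $k, n \in \mathbb{N}$ and $\nu = l(\gamma) + 4k$) whose bottom is $\gamma$ padded by $t^k$ on each side, and whose other three sides are chessboard walks ($t^{\nu/2}$ on top, $t^n$ on the left and right). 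I place $R$ in $\mathbb{Z}^2$ with $x$-coordinates $\llbracket -L - 2k, L + 2k\rrbracket$ and with its bottom row on $y = r$, and fill the remainder of $\mathbb{Z} \times \llbracket r + 1, +\infty\llbracket$ with the chessboard; with the parity choices above, the chessboard walks bordering $R$ match the surrounding chessboard extension, yielding a globally consistent homshift configuration above the strip.

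After performing the symmetric construction below the strip, the resulting $y \in X^2_G$ coincides with $x''$ outside a bounded region (the union of $\boldsymbol{B}^2(N)$ with the two fill rectangles), so $y \in \Delta_{X^2_G}(x'')$ and satisfies the strip-gluing conditions by construction. The main technical hurdle I expect is handling the non-backtracking hypothesis in Lemma \ref{lemma.box}: the walk $\gamma$ typically contains backtracks, if only because its chessboard tails are made of triples $t_0 t_1 t_0$. I would address this either by applying Lemma \ref{lemma.box} to $\varphi(\gamma)$ and reinflating the resulting rectangle --- inserting, for each backtrack $aba$ removed by $\varphi$, a pair of adjacent columns with alternating values $(b, a)$ compatible with the surrounding pattern --- or by noting that the proof of Lemma \ref{lemma.box} via Lemma \ref{lemma.intermediate.box} adapts to walks with backtracks as long as their reduction $\varphi(\gamma)$ is square-decomposable.
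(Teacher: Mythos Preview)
Your argument is correct and matches the paper's: glue $x$ and $x'$ on the strip, pad with the chessboard outside a large box, and invoke Lemma~\ref{lemma.box} to fill the two remaining rectangular holes above and below. You are in fact more careful than the paper about the non-backtracking hypothesis of Lemma~\ref{lemma.box} --- the paper glosses over it --- and of your two proposed fixes, adapting the proof via Lemma~\ref{lemma.intermediate.box} is the cleaner route (a backtrack removal is handled there just like a square removal), whereas the column-reinflation sketch would need more care about what sits above the bottom row of the inserted columns.
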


\begin{proof}
    Let $x''$ be such a chessboard configuration and fix any $r \ge 0$ and $x,x' \in \Delta_X(x'')$ which coincide on $\boldsymbol{B}^2(r)$. The infinite pattern $p$ on $\mathbb{Z} \times \llbracket -r, r \rrbracket$ which coincides with $x$ on $\llbracket -r, +\infty \llbracket \times \llbracket -r, r \rrbracket$ and with $x'$ on $\rrbracket {-\infty} , r \rrbracket \times \llbracket -r, r \rrbracket$ is locally admissible. We choose $l>r$ large enough so that $x,x'$ coincide with $x''$ outside of $\boldsymbol{B}^2(l)$. In particular, $p$ coincides with $x''$ outside of $\llbracket {-l},l \rrbracket \times \llbracket {-r} , r\rrbracket$. We extend $p$ so that it coincides with $x''$ everywhere outside of $\boldsymbol{B}^2(l)$. Notice that $p$ is now defined everywhere except on $(\rrbracket {-l},l \llbracket \times \rrbracket {-l} , {-r}\llbracket) \cup (\rrbracket {-l},l \llbracket \times \rrbracket r , l\llbracket)$, and that the restriction of $p$ to $\llbracket -l,l \rrbracket\times \{r\}$ read from left to right is a cycle of the form $t^k \odot \gamma \odot t^k$, where $\gamma$ is a cycle, and similarly for $\llbracket -l,l \rrbracket\times \{-r\}$. 
The integer $l$ can be chosen large enough so that we can apply Lemma~\ref{lemma.box} to extend $p$ into a configuration of $\Delta_X(x'')$.
\end{proof}

\begin{remark}\label{remark.specification}
    In this article, we characterize homshifts having trivial cohomology in discrete groups using the strip-gluing property (Definition \ref{def:stripgluing}). In \cite{Schmidt95}, Schmidt studied cohomological triviality in more general groups, namely locally compact second countable ones, by using {a very similar property called} specification. {Both properties consist in being able to glue together locally admissible patterns taken from  configurations in the same Gibbs equivalence class, but strip-gluing apply to strip patterns while specification apply to cone patterns}.

    We cannot use the specification property for our classification because of the following example, which has trivial cohomology but does not satisfy Schmidt's specification property. Consider the Kenkatabami graph in Figure \ref{figure.kenkatabami}. Let $G$ be the graph obtained by adding a self-loop on the central vertex $\omega$. Since the Kenkatabami graph has trivial square group, $G$ has a trivial even square group (Proposition \ref{proposition: adding self loops}), so it has trivial cohomology. We describe two configurations $x,x'$ in the Gibbs equivalence class of an arbitrary chessboard configuration, illustrated in Figure~\ref{fig:spec.kenkatabami}: given $n\in \N$, $x$ has the word $c^{-n}$ written along the top border of the left cone and $x'$ has the word $c^n$ written along the top border of the right cone. If we assume the specification property, there is some configuration $y$ which is equal to $x$ on the left cone and equal to $x'$ in the right cone, which forces in each case a triangular pattern where words $c^{\alpha n}$ and $c^{- \alpha n}$ appear, where $\alpha$ is the sine of the cone slope. However, we know that the minimal possible distance between two occurrences of these words is $\Theta(\log(n))$ \cite[Theorem \textbf{6.15}]{gangloff2022short}, so $y$ cannot exist.

\begin{figure}
\begin{center}
\begin{tikzpicture}[scale=0.4]
\draw (-2,0) -- (0,0) -- (2,0);
\draw (-2,0) -- (-3,1.72) -- (-1,1.72);
\draw (2,0) -- (3,1.72) -- (1,1.72);
\draw (-1,1.72) -- (0,0) -- (1,1.72);
\draw (-1,-1.72) -- (0,0) -- (1,-1.72);
\draw (-1,-1.72) -- (0,-3.44) -- (1,-1.72);
\draw (-1,1.72) -- (0,3.44) -- (1,1.72);
\draw (-3,1.72) -- (0,5.88) -- (0,3.44);
\node at (0,6.7) {$\epsilon_1$};
\draw[fill=gray!90] (2,0) circle (3pt);
\node at (1.5,0.5) {$\mu_6$};
\draw[fill=gray!90] (-2,0) circle (3pt);
\node at (-1.5,0.5) {$\mu_3$};
\draw (0,5.88) -- (3,1.72);
\draw (-2,0) -- (-3,-1.72) -- (-1,-1.72);
\draw (2,0) -- (3,-1.72) -- (1,-1.72);
\draw[fill=gray!90] (1,-1.72) circle (3pt);
\node at (1.75,-2.5) {$\mu_5$};
\draw[fill=gray!90] (-1,-1.72) circle (3pt);
\node at (-1.75,-2.5) {$\mu_4$};
\draw[fill=gray!90] (3,-1.72) circle (3pt);
\node at (3.5,-1) {$\delta_3$};
\draw[fill=gray!90] (-3,-1.72) circle (3pt);
\node at (-3.5,-1) {$\delta_2$};

\draw[fill=gray!90] (1,1.72) circle (3pt);
\node at (1.5,2.25) {$\mu_1$};
\draw[fill=gray!90] (-1,1.72) circle (3pt);
\node at (-1.5,2.25) {$\mu_2$};
\draw[fill=gray!90] (3,1.72) circle (3pt);
\node at (3.65,2) {$\gamma_3$};
\draw[fill=gray!90] (-3,1.72) circle (3pt);
\node at (-3.65,2) {$\gamma_1$};

\node at (6,-3.75) {$\epsilon_3$};
\node at (-6,-3.75) {$\epsilon_2$};
\draw[fill=gray!90] (-5.225,-2.945) circle (3pt);
\draw[fill=gray!90] (5.225,-2.945) circle (3pt);
\draw (3,-1.72) -- (5.225,-2.945) -- (0,-3.44);
\node at (0,-4.6) {$\gamma_2$};
\draw[fill=gray!90] (0,-3.44) circle (3pt);
\draw[fill=gray!90] (0,5.88) circle (3pt);
\draw[fill=gray!90] (0,0) circle (3pt);
\node at (-1,-0.5) {$\omega$};
\draw[fill=gray!90] (0,3.44) circle (3pt);
\node at (-0.75,3.7) {$\delta_1$};
\draw (5.225,-2.945) -- (3,1.72);

\draw (-3,-1.72) -- (-5.225,-2.945) -- (0,-3.44);
\draw (-5.225,-2.945) -- (-3,1.72);
\end{tikzpicture}
\end{center}
\caption{The Kenkatabami graph.\label{figure.kenkatabami}}
\end{figure}
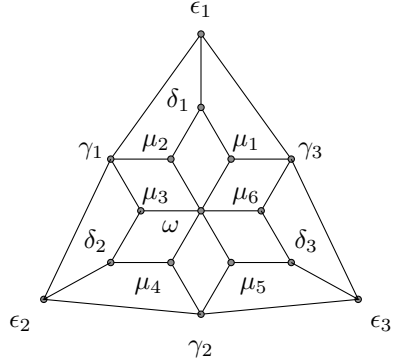

\begin{figure}
\begin{center}
\input{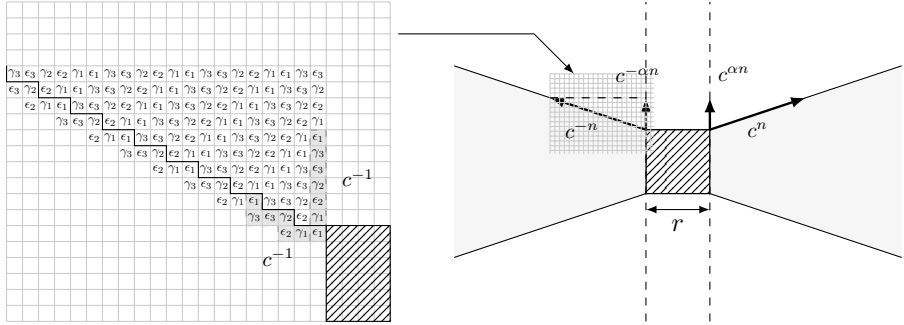}
\end{center}
\caption{Illustration for Remark \ref{remark.specification}. {The picture on the left illustrates how vertical $c^{-\alpha n}$ is forced by $c^{-n}$ written on the negative cone. We assume here for simplicity that the box's corners are included in the border of the cones.}}
\label{fig:spec.kenkatabami}
\end{figure}
\end{remark}

\subsubsection*{Density of Gibbs equivalence classes}

The following result is a consequence of \cite[Proposition \textbf{3.1}]{MR3743365} and the techniques introduced for the proof of \cite[Propositon \textbf{9.2}]{zbMATH07433654}. The details are left to the reader.

\begin{proposition}\label{proposition.equivalence.cover}Let $G$ be a finite graph. Fix an edge $(t_0, t_1)$ in $G$ and let $x$ be a $(t_0, t_1)$-chessboard configuration.
If $G$ is not bipartite then $\Delta_{X^d_G}(x)$ is dense.
\end{proposition}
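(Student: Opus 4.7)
The plan is to show that every nonempty basic cylinder in $X^d_G$ meets $\Delta_{X^d_G}(x)$. Fix $y \in X^d_G$ and $r \ge 0$; I aim to produce $z \in \Delta_{X^d_G}(x)$ with $z|_{\boldsymbol{B}^d(r)} = y|_{\boldsymbol{B}^d(r)}$. The construction follows a \emph{patch-and-glue} strategy: choose $R > r$ large, keep the values of $y$ on $\boldsymbol{B}^d(r)$, place the chessboard $x$ on $\Z^d \setminus \boldsymbol{B}^d(R)$, and fill the annular region $\boldsymbol{B}^d(R) \setminus \boldsymbol{B}^d(r)$ with an admissible pattern whose outer ring coincides with $x|_{\partial \boldsymbol{B}^d(R)}$ and whose inner ring is compatible with $y|_{\boldsymbol{B}^d(r)}$. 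The resulting $z$ is then admissible by construction and differs from $x$ on the finite set $\boldsymbol{B}^d(R)$ only, hence lies in $\Delta_{X^d_G}(x)$.

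The crux is thus to show that, given $q = y|_{\boldsymbol{B}^d(r)}$, one can find $R > r$ and an admissible pattern $p$ on $\boldsymbol{B}^d(R)$ with $p|_{\boldsymbol{B}^d(r)} = q$ and $p|_{\partial \boldsymbol{B}^d(R)} = x|_{\partial \boldsymbol{B}^d(R)}$. The non-bipartite hypothesis on $G$ is essential here. In a bipartite graph, the partition-class coloring induced by any configuration is globally rigid, so a pattern whose ``parity'' is incompatible with that of $x$ could never be reconciled with the chessboard boundary, regardless of how wide the annulus is. Odd cycles in a non-bipartite $G$ provide the required degrees of freedom to absorb parity mismatches along walks. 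Proposition \textbf{3.1} in \cite{MR3743365} packages this into a reachability-type statement, which lets us first extend $q$ admissibly to an intermediate box whose outer ring has controlled values; a further annular layer then adjusts these values stepwise to match the chessboard ring on $\partial \boldsymbol{B}^d(R)$, by reusing the combinatorial techniques developed for the proof of Proposition \textbf{9.2} in \cite{zbMATH07433654}.

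The main obstacle is precisely this prescribed-boundary interpolation: topological mixing alone (Lemma~\ref{lemma.mixing.bipartite}) only guarantees coexistence of the inner pattern and isolated patches of $x$ in a common configuration, not the simultaneous prescription of the full chessboard ring on $\partial \boldsymbol{B}^d(R)$. For $d > 2$, the two-dimensional construction is carried out slice by slice and glued consistently across adjacent slices, using the rigidity of the chessboard in the transverse directions; these details are routine but tedious, which is why the authors leave them to the reader.
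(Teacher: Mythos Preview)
Your proposal is correct and follows precisely the route the paper indicates: the paper itself does not write out a proof but simply states that the result follows from \cite[Proposition~\textbf{3.1}]{MR3743365} together with the techniques of \cite[Proposition~\textbf{9.2}]{zbMATH07433654}, leaving the details to the reader. Your sketch is in fact a faithful unpacking of that sentence---the patch-and-glue strategy, the role of non-bipartiteness in resolving parity obstructions, and the need for a prescribed-boundary interpolation beyond mere mixing are exactly what those references supply.
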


\subsection{When the even square group is trivial\label{section.proof.equivalence}}
{To complete the proof of Theorem \ref{theorem: cohomological triviality}}
we are left to prove that when the even square group of $G$ is trivial, all the cocycles on $X^2_G$ are trivial. 
This will be achieved with
Proposition~\ref{theorem.finite.case}. The proof relies on variations of Proposition \textbf{3.1} and Theorem \textbf{3.2} in~\cite{Schmidt95}. 

\begin{definition}
    Consider a two-dimensional subshift $X$ and a continuous function $f : X \rightarrow \mathbb{G}$, where $\mathbb{G}$ is a discrete group.
    Since $f$ is continuous, for all $(x,x') \in \Delta_X$, there exists a minimum positive integer $m(x,x')$ such that $f(\sigma^{k\boldsymbol{e}^1} (x)) = f(\sigma^{k\boldsymbol{e}^1} (x'))$ for all integers $k$ such that $|k| > m(x,x')$. 
    We define $c_f^{\pm} : \Delta_X \rightarrow \mathbb{G}$ as follows for $(x,x') \in \Delta_X$ (the product notation corresponds to 
    $\prod_{k=0}^m a_k=a_0a_1\ldots a_m$): 
    \[c_f^{+}(x,x') \coloneqq \left(\prod_{k=0}^{m(x,x')} f\left(\sigma^{ {k}\boldsymbol{e}^1}(x)\right)^{{-1}}\right) \cdot \left(\prod_{k=0}^{m(x,x')} f\left(\sigma^{k \boldsymbol{e}^1}(x')\right)^{{-1}}\right)^{-1}\]
\[c_f^{-}(x,x') \coloneqq \left(\prod_{k=1}^{m(x,x')} f\left(\sigma^{-k \boldsymbol{e}^1}(x)\right)\right) \cdot \left(\prod_{k=1}^{m(x,x')} f\left(\sigma^{-k \boldsymbol{e}^1}(x')\right)\right)^{-1}\]
\end{definition} 
Note that the functions $c_f^{\pm}$ have the property that for all $(x,y), (y,z)\in \Delta_X$, 
\begin{equation}
    \label{equation:cocycle property}c_f^{\pm}(x,z)=c_f^{\pm}(x,y)c_f^{\pm}(y,z).
\end{equation}
This follows from the fact that, in the formula for $c_f^{\pm}$, $m(x, x')$ can be replaced with any larger integer.
While the definitions $c_f^+$ and $c_f^-$ might seem cumbersome, here is a point of view for an intuition underlying their definition. Let $c$ be a cocycle on a shift space $X$ and $f(x)=c(x,\boldsymbol{e}_1)$. Suppose the cocycle is cohomologous to a homomorphism with transfer function $b$. We then get 
\[c_f^+(x,x')=c_f^-(x,x')= (b(x))^{-1}\cdot b(x').\]

 Thus the functions $c_f^+$ and $c_f^-$ can help identify the transfer function. This also helps explain the hypothesis in the forthcoming proposition.

\begin{proposition}\label{prop.main}
    Let us consider a two-dimensional subshift $X$ and a continuous function $f : X \rightarrow \mathbb{G}$, where $\mathbb{G}$ is a discrete group. {Assume that there is some $x \in X$ such that $X = \overline{\Delta_X(x)}$ and that $X$ has the strip-gluing property relative to $x$.}
    If we have $c_f^{+} = c_f^{-}$
    then there exists a continuous function $b : X \rightarrow \mathbb{G}$ such that $y \mapsto b(\sigma^{\boldsymbol{e}^1}(y))^{-1} \cdot f(y) \cdot b(y)$ is constant on $X$.
\end{proposition}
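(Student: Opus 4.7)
The plan is to construct $b$ on the Gibbs class $\Delta_X(x)$ via the cocycle-like functional $c_f^+$, extend continuously to $X$, and verify the coboundary relation by a direct calculation. I would set $b(y) \coloneqq c_f^+(x, y)^{-1}$ for $y \in \Delta_X(x)$, so that $b(x) = 1_{\mathbb{G}}$ and, by the cocycle property \eqref{equation:cocycle property}, $c_f^+(y, y') = b(y) b(y')^{-1}$ for $y, y' \in \Delta_X(x)$.

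The main continuity step is to show $b$ is locally constant on $\Delta_X(x)$. Take $y, y' \in \Delta_X(x)$ agreeing on a box $\boldsymbol{B}^2(r)$ with $r$ larger than the dependency radius of $f$. Applying strip-gluing produces $z \in \Delta_X(x)$ coinciding with $y$ on the right half-strip $\llbracket -r, +\infty \llbracket \times \llbracket -r, r \rrbracket$ and with $y'$ on the left half-strip $\rrbracket -\infty, r \rrbracket \times \llbracket -r, r \rrbracket$. Agreement of $y$ with $z$ on the right half-strip forces $f(\sigma^{k\boldsymbol{e}^1}(y)) = f(\sigma^{k\boldsymbol{e}^1}(z))$ for every $k \geq 0$, so the two products defining $c_f^+(y, z)$ are identical and $c_f^+(y, z) = 1_{\mathbb{G}}$. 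Symmetrically $c_f^-(y', z) = 1_{\mathbb{G}}$, and the hypothesis $c_f^+ = c_f^-$ then yields $c_f^+(y', z) = 1_{\mathbb{G}}$. Combined with the cocycle property this gives $b(y) = b(z) = b(y')$. Since $\mathbb{G}$ is discrete and $X$ compact, $b$ is uniformly locally constant on $\Delta_X(x)$ and therefore extends uniquely, by density, to a continuous function $b: X \rightarrow \mathbb{G}$.

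Finally I would set $h(y) \coloneqq b(\sigma^{\boldsymbol{e}^1}(y))^{-1} f(y) b(y)$; this is continuous, and by the same strip-gluing argument $h$ is uniformly locally constant on $X$. To conclude $h$ is globally constant, by continuity and density it suffices to verify constancy on $\Delta_X(x)$. For $y \in \Delta_X(x)$, I would pick by density a proxy $y^* \in \Delta_X(x)$ agreeing with $\sigma^{\boldsymbol{e}^1}(y)$ on a sufficiently large box, so that $b(\sigma^{\boldsymbol{e}^1}(y)) = b(y^*)$. A term-by-term inspection of $c_f^+(y, y^*)$, exploiting that shifts of $y^*$ within the agreement range reproduce shifts of $\sigma^{\boldsymbol{e}^1}(y)$ (hence further shifts of $y$) while tail contributions cancel because $y, y^* \in \Delta_X(x)$, should identify $c_f^+(y, y^*) f(y)$ as a quantity independent of $y$. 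Combining with the cocycle identity $c_f^+(x, y^*) = c_f^+(x, y) c_f^+(y, y^*)$ then yields constancy of $h$ on $\Delta_X(x)$, and hence on $X$.

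The main obstacle is this concluding computation of $c_f^+(y, y^*)$. Because $\sigma^{\boldsymbol{e}^1}(y)$ generally lies outside $\Delta_X(x)$ (in the chessboard case the shift lands in the other Gibbs class), the proxy $y^*$ necessarily differs from $x$ on a region comparable to its agreement box with $\sigma^{\boldsymbol{e}^1}(y)$, leaving a transitional band where the products in $c_f^+$ are not directly prescribed. Handling this cleanly likely requires an auxiliary construction of $y^*$ or an alternating use of $c_f^+$ and $c_f^-$ to arrange tail cancellations on both sides, which is where the hypothesis $c_f^+ = c_f^-$ plays its most subtle role beyond the continuity step.
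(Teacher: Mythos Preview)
Your construction of $b$ and the continuity argument via strip-gluing match the paper exactly. The gap is entirely in the final step, and the proxy approach you sketch is more complicated than necessary and, as you say, not actually carried out.

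The observation you are missing is a simple two-variable reindexing identity. For any pair $(y,y') \in \Delta_X$ one has, directly from the defining products (shift the index $k \mapsto k+1$),
\[
c_f^{+}(y,y') \;=\; f(y)^{-1} \cdot c_f^{+}\bigl(\sigma^{\boldsymbol{e}^1}(y),\, \sigma^{\boldsymbol{e}^1}(y')\bigr) \cdot f(y').
\]
The crucial point is that $\bigl(\sigma^{\boldsymbol{e}^1}(y), \sigma^{\boldsymbol{e}^1}(y')\bigr)$ lies again in $\Delta_X$ (Gibbs equivalence is shift-invariant), so the right-hand side is well-defined without any proxy, even though neither shifted configuration need lie in $\Delta_X(x)$. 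By continuity the identity passes to the extension $\overline{c}_f^{+}$ on $X^2$, and combined with $\overline{c}_f^{+}(y,y') = b(y)\, b(y')^{-1}$ it rearranges immediately to
\[
b(\sigma^{\boldsymbol{e}^1}(y))^{-1} f(y)\, b(y) \;=\; b(\sigma^{\boldsymbol{e}^1}(y'))^{-1} f(y')\, b(y'),
\]
which is exactly the desired constancy. Working with pairs rather than single points is what makes the uncontrolled tails disappear: when both arguments are shifted simultaneously, the ``transitional band'' you worry about never enters. Note also that the hypothesis $c_f^{+}=c_f^{-}$ is used only once, in the continuity step; it plays no further role in the constancy argument.
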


\begin{proof}
Since $f$ is continuous, there exists $r^{*} \ge 0$ such that if $y,y'$ coincide on {$\boldsymbol B(r^*)$}, then $f(y) = f(y')$. 

\paragraph*{(i) When $(y,y') \in \Delta_X(x)$ agree on $B(r^{\star})$, $c_f^{+}(y,y') = 1_{\mathbb{G}}$.}
Indeed, let us consider two configurations $y,y' \in \Delta_X(x)$ which coincide on {$\boldsymbol B(r^*)$}. Since $X$ has the strip-gluing property relative to $x$, there exists $z \in \Delta_X(x)$ which coincides with $y$ on $\llbracket -r^* , +\infty\llbracket \times \llbracket -r^* , r^* \rrbracket $ and with $y'$ on $\llbracket - \infty, r^* \llbracket\times \llbracket - r^* , r^* \rrbracket$. Clearly we have $c_f^{-}(z,y') = 1_{\mathbb{G}}$ and $c_f^{+}(y,z) = 1_{\mathbb{G}}$. 
Since, by assumption, $c_f^{+} = c_f^{-}$, we also have $c_f^{+}(z,y') = 1_{\mathbb{G}}$, and therefore $c_f^{+} (y,y') = c_f^{+}(y,z) \cdot  c_f^{+}(z,y') = 1_{\mathbb{G}}$.
\paragraph*{(ii) Definition of the function $b$.}
 For all $y,y',z,z' \in \Delta_X(x)$,  by {(i) and} \eqref{equation:cocycle property}, we have that $c_f^{\pm}(y,z)=c_f^{\pm}(y',z')$ whenever $y_{B(r^\star)} = y'_{B(r^\star)}$ and $z_{B(r^\star)} = z'_{B(r^\star)}$. Therefore we can extend $c_f^{\pm}$ by continuity to $\overline{\Delta_X(x)}^2 = X^2$. Let us denote by $\overline{c}_f^{+}$ the continuous extension of $c_f^{+}$ to $X^2$.

For any $y\in X$, define $b(y) \coloneqq \overline{c}_f^{+}(y,x)$.
Then we have by definition  of $c_f^{+}$ and by continuity of $b$:
\begin{equation}\label{eq:bbx}
\forall y,y'\in \overline{\Delta_X(x)},\quad \overline{c}_f^+(y,y')=b(y)b(y')^{-1}.
\end{equation}

\paragraph*{(iii) The function $y \mapsto b(\sigma^{\boldsymbol{e}^1}(y))^{-1} \cdot f(y) \cdot b(y)$ is constant on $X$.} 
From the definition it is straightforward that:
\[\forall y,y' \in \Delta_X(x),\quad \overline{c}_f^{+} (y,y') = f(y)^{-1} \cdot \overline{c}_f^{+} (\sigma^{\boldsymbol{e}^1}(y),\sigma^{\boldsymbol{e}^1}(y')) \cdot f(y').\]
By continuity this is also true for all $y,y'\in \overline{\Delta_X(x)}$.
 By \eqref{eq:bbx} this leads to:
\[\forall y,y' \in \overline{\Delta_X(x)},\quad b(y) \cdot b(y')^{-1} = f(y)^{-1} \cdot b(\sigma^{\boldsymbol{e}^1}(y)) \cdot b(\sigma^{\boldsymbol{e}^1}(y'))^{-1} \cdot f(y')\]
which immediately implies that $ 
{y} \mapsto b(\sigma^{\boldsymbol{e}^1}(y))^{-1} \cdot f(y) \cdot b(y)$ is constant $X$.
The proof is complete.
\end{proof}

The following result is similar to Theorem \textbf{3.2} in~\cite{Schmidt95}:

\begin{theorem}\label{theorem.schmidt.mult}
Let us consider a two-dimensional subshift $X$ and a discrete group $\mathbb{G}$. 
{Assume that there is some $x \in X$ such that $X = \overline{\Delta_X(x)}$ and that $X$ has the strip-gluing property relative to $x$.}
Assume additionally that $\sigma^{\boldsymbol{e}^1}$ is mixing on $X$. Then every continuous cocycle $c : \mathbb{Z}^2 \times X \rightarrow \mathbb{G}$ {is trivial}.
\end{theorem}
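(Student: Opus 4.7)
The plan is to apply Proposition~\ref{prop.main} to the function $f = c(\boldsymbol{e}^1, \cdot) : X \to \mathbb{G}$; this will produce a transfer function after which the $\boldsymbol{e}^1$-direction of the cocycle becomes a constant $g_1 \in \mathbb{G}$, and topological mixing of $\sigma^{\boldsymbol{e}^1}$ will then force the $\boldsymbol{e}^2$-direction to be constant as well. By the path-independence formula~\eqref{equation.product.along.walk}, the resulting cocycle is then a group homomorphism, and $c$ is cohomologous to it, hence trivial.

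To verify the hypothesis $c_f^+ = c_f^-$ of Proposition~\ref{prop.main}, I first rewrite the defining products using~\eqref{equation.product.along.walk} along horizontal walks as
\[c_f^+(y,y') = c((m+1)\boldsymbol{e}^1, y)^{-1}\, c((m+1)\boldsymbol{e}^1, y'), \quad c_f^-(y,y') = c(-m\boldsymbol{e}^1, y)^{-1}\, c(-m\boldsymbol{e}^1, y'),\]
where $m = m(y,y')$ for $(y,y') \in \Delta_X$. Applying the cocycle equation to decompose $c((m+1)\boldsymbol{e}^1, \cdot) = c((2m+1)\boldsymbol{e}^1, \sigma^{-m\boldsymbol{e}^1}(\cdot))\, c(-m\boldsymbol{e}^1, \cdot)$ then reduces the desired equality to
\[c((2m+1)\boldsymbol{e}^1, \sigma^{-m\boldsymbol{e}^1}(y)) = c((2m+1)\boldsymbol{e}^1, \sigma^{-m\boldsymbol{e}^1}(y')).\]
Since $y$ and $y'$ differ at only finitely many coordinates and each $c(\pm\boldsymbol{e}^i, \cdot)$ is locally constant with some bounded dependence radius, I plan to compute both sides via~\eqref{equation.product.along.walk} along a common detour walk going from $\boldsymbol{0}$ up to $N\boldsymbol{e}^2$, across to $N\boldsymbol{e}^2 + (2m+1)\boldsymbol{e}^1$, and down to $(2m+1)\boldsymbol{e}^1$. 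For $m$ and $N$ chosen large enough relative to the dependence radius and the (shifted) disagreement support, every cocycle factor along this walk depends only on coordinates where $\sigma^{-m\boldsymbol{e}^1}(y)$ and $\sigma^{-m\boldsymbol{e}^1}(y')$ agree, so the two products coincide.

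Proposition~\ref{prop.main} then produces a continuous $b : X \to \mathbb{G}$ such that the cohomologous cocycle $c'(\boldsymbol{n}, y) := b(\sigma^{\boldsymbol{n}}(y))^{-1}\, c(\boldsymbol{n}, y)\, b(y)$ satisfies $c'(\boldsymbol{e}^1, \cdot) \equiv g_1$. Writing the cocycle equation for $c'$ at $\boldsymbol{e}^1 + \boldsymbol{e}^2$ in both orders yields $c'(\boldsymbol{e}^2, \sigma^{\boldsymbol{e}^1}(y)) = g_1\, c'(\boldsymbol{e}^2, y)\, g_1^{-1}$. Discreteness of $\mathbb{G}$ and compactness of $X$ make the image of $c'(\boldsymbol{e}^2, \cdot)$ finite, and its level sets $\{A_h\}$ form a nonempty finite clopen partition with $\sigma^{n\boldsymbol{e}^1}(A_h) = A_{g_1^n h g_1^{-n}}$. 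Topological mixing of $\sigma^{\boldsymbol{e}^1}$ forces any two nonempty $A_h, A_{h'}$ to satisfy $A_h \cap \sigma^{n\boldsymbol{e}^1}(A_{h'}) \neq \emptyset$ for all sufficiently large $n$, so $h = g_1^n h' g_1^{-n}$ for all such $n$; comparing consecutive values forces $h' = g_1 h' g_1^{-1}$, and hence $h = h'$. Thus $c'(\boldsymbol{e}^2, \cdot)$ is constant, so $c'$ is a group homomorphism and $c$ is trivial.

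The main obstacle will be the bookkeeping in the first step: one must carefully track the dependence radii of the locally constant cocycle and choose both $m$ and the detour height $N$ large enough that the factors along the detour sample coordinates disjoint from the (shifted) finite disagreement region. The underlying principle, that path-independence of the cocycle allows routing a product around a finite defect, is however quite natural, and no further properties of $X$ beyond the standing hypotheses are needed for this reduction.
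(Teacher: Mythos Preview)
Your proof is correct and follows the same overall architecture as the paper's: verify $c_f^+ = c_f^-$, apply Proposition~\ref{prop.main} to make $c'(\boldsymbol{e}^1,\cdot)$ constant, then use mixing of $\sigma^{\boldsymbol{e}^1}$ together with the conjugation relation $c'(\boldsymbol{e}^2,\sigma^{\boldsymbol{e}^1}(y))=g_1\,c'(\boldsymbol{e}^2,y)\,g_1^{-1}$ to force $c'(\boldsymbol{e}^2,\cdot)$ constant.

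The one genuine difference is in how you establish $c_f^+ = c_f^-$. The paper derives the identity
\[
c_f^{\pm}(y,y') = c(\boldsymbol{e}^2,y)^{-1}\, c_f^{\pm}(\sigma^{\boldsymbol{e}^2}(y),\sigma^{\boldsymbol{e}^2}(y'))\, c(\boldsymbol{e}^2,y')
\]
and iterates it vertically until $\sigma^{l\boldsymbol{e}^2}(y)$ and $\sigma^{l\boldsymbol{e}^2}(y')$ agree on a full horizontal strip, where $c_f^{\pm}$ is trivially~$1$. Your route---rewriting $c_f^{\pm}$ as $c(\cdot\,\boldsymbol{e}^1,y)^{-1}c(\cdot\,\boldsymbol{e}^1,y')$ and then computing $c((2m+1)\boldsymbol{e}^1,\sigma^{-m\boldsymbol{e}^1}(\cdot))$ along a rectangular detour that avoids the finite disagreement set---is more geometric and arguably cleaner: it uses only path-independence~\eqref{equation.product.along.walk} and the freedom to enlarge $m$, with no need for the intermediate conjugation identity. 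Both arguments exploit the same underlying fact (the cocycle sees nothing across a region where the two configurations agree), and your final mixing step via level sets is a repackaging of the paper's argument in part~(iii).
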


\begin{proof}
Let $c : \mathbb{Z}^2 \times X \rightarrow \mathbb{G}$ be a {continuous} cocycle and set $f = c(\boldsymbol{e}^1,\cdot)$. Since $f$ is continuous, there exists $r^{*} \ge 0$ such that if $y,y'$ coincide on {$\boldsymbol B(r^*)$}, then $f(y) = f(y')$. 

\paragraph*{(i) Equality of ${c_f^+}$ and ${c_f^{-}}$.}
 Using the cocycle equation \eqref{eq;cocycle-def} first with $\boldsymbol{m} = \boldsymbol{e}^2$ and $\boldsymbol{n}=\boldsymbol{e}^1$, and then with $\boldsymbol{m} = \boldsymbol{e}^1$ and $\boldsymbol{n}=\boldsymbol{e}^2$, we get that for all $z\in X$:
\[c(\boldsymbol{e}^2 + \boldsymbol{e}^1, z) = c(\boldsymbol{e}^2,\sigma^{\boldsymbol{e}^1}(z)) \cdot c(\boldsymbol{e}^1,z) = c(\boldsymbol{e}^2,\sigma^{\boldsymbol{e}^1}(z)) \cdot f(z).\]
\[c(\boldsymbol{e}^2 + \boldsymbol{e}^1, z) = c(\boldsymbol{e}^1,\sigma^{\boldsymbol{e}^2}(z)) \cdot c(\boldsymbol{e}^2,z) = f(\sigma^{\boldsymbol{e}^2}(z)) \cdot c(\boldsymbol{e}^2,z).\]
We thus get:
\[\forall z \in X,\quad c(\boldsymbol{e}^2,\sigma^{\boldsymbol{e}^1}(z)) \cdot f(z) \cdot c(\boldsymbol{e}^2,z)^{-1} = f(\sigma^{\boldsymbol{e}^2}(z)).\]
From this we have that for all $l \in \mathbb{Z}$ and all {$z \in X$}, 
\begin{equation}\label{eq:conjcocycle}c(\boldsymbol{e}^2,\sigma^{(l+1)\boldsymbol{e}^1}(z)) \cdot f(\sigma^{l\boldsymbol{e}^1}(z)) \cdot c(\boldsymbol{e}^2,\sigma^{l\boldsymbol{e}^1}(z))^{-1} = f(\sigma^{\boldsymbol{e}^2 + l\boldsymbol{e}^1}(z)).
\end{equation}
We claim the following:
\begin{equation}\label{eq:compcocycl}\forall y,y' \in {\Delta_X(x)},\quad c_f^{\pm} (y,y') = c(\boldsymbol{e}^2,y)^{{-1}} \cdot c_f^{\pm} (\sigma^{\boldsymbol{e}^2}(y),\sigma^{\boldsymbol{e}^2}(y')) \cdot c(\boldsymbol{e}^2,y').
\end{equation}

 Let us prove this for $c_f^+$ (the reasoning is similar for $c_f^{-}$). Fix $(y,y') \in \Delta_X$. Let us recall that : 
\[c_f^{+}(y,y') \coloneqq \left(\prod_{k=0}^{m} f\left(\sigma^{k\boldsymbol{e}^1}(y)\right)^{{-1}}\right) \cdot \left(\prod_{k=0}^{m} f\left(\sigma^{k \boldsymbol{e}^1}(y')\right)^{{-1}}\right)^{-1}\]
for all $m\geq m(y,y')$. Let us denote the first factor by $F_1$ and the second factor by $F_2$. In a similar manner, we denote by $F'_1$ and $F'_2$ the two factors in the definition of $c_f^{+} (\sigma^{\boldsymbol{e}^2}(y),\sigma^{\boldsymbol{e}^2}(y'))$.
Taking the inverse of
\eqref{eq:conjcocycle}:
\[c(\boldsymbol{e}^2,\sigma^{l\boldsymbol{e}^1}(z)) \cdot f(\sigma^{l\boldsymbol{e}^1}(z))^{-1} \cdot c(\boldsymbol{e}^2,\sigma^{(l+1)\boldsymbol{e}^1}(z))^{-1} = f(\sigma^{\boldsymbol{e}^2 + l\boldsymbol{e}^1}(z))^{-1}.
\]
We take the product of these equations for $z=y$ over 
{$l=0,1,\ldots,m$}
and obtain: 
\[c(\boldsymbol{e}^2,y) \cdot F_1 \cdot c(\boldsymbol{e}^2,\sigma^{(m+1)\boldsymbol{e}^1}(y))^{-1} = F'_1.\]
In a similar way we have: 
\[c(\boldsymbol{e}^2,\sigma^{(m+1)\boldsymbol{e}^1}(y')) \cdot F_2 \cdot c(\boldsymbol{e}^2,y')^{-1} = F'_2.\]
For $m$ large enough, we have {by continuity of $c$}: 
\[c(\boldsymbol{e}^2,\sigma^{(m+1)\boldsymbol{e}^1}(y)) = c(\boldsymbol{e}^2,\sigma^{(m+1)\boldsymbol{e}^1}(y')).\]
By the three equations above, we get 
\[F'_1 \cdot F'_2 = c(\boldsymbol{e}^2,y) \cdot F_1 \cdot F_2 \cdot c(\boldsymbol{e}^2,y')^{-1},\]
which proves the claim 
\eqref{eq:compcocycl}.

If we fix any $l \ge 1$ and apply
\eqref{eq:compcocycl} inductively, we get:

\[c_f^{\pm} (y,y') = \left(\prod_{q=l-1}^{0} c(\boldsymbol{e}^2,\sigma^{q\boldsymbol{e}^2}(y)) \right)^{ -1} \cdot c_f^{\pm} (\sigma^{l\boldsymbol{e}^2}(y),\sigma^{l\boldsymbol{e}^2}(y')) \cdot \left(\prod_{q=l-1}^{0} c(\boldsymbol{e}^2,\sigma^{q\boldsymbol{e}^2}(y')) \right)\]

Since $(y,y') \in \Delta_X$, for $l\geq 1$ sufficiently large  the points $\sigma^{l\boldsymbol{e}^2}(y)$ and $\sigma^{l\boldsymbol{e}^2}(y')$ coincide on $\mathbb{Z} \times \llbracket -r^* ,r^*\rrbracket $ and thus $c_f^{\pm} (\sigma^{l\boldsymbol{e}^2}(x),\sigma^{l\boldsymbol{e}^2}(y')) = 1_{\mathbb{G}}$.
This implies that $c_f^{+}(y,y') = c_f^{-}(y,y')$.  

\paragraph*{(ii) Definition of the cocycle $\boldsymbol{c'}$.} Since $c_f^{+} = c_f^{-}$, we apply Proposition~\ref{prop.main} to obtain a continuous function $b \colon X \rightarrow \mathbb{G}$ and $a \in \mathbb{G}$ such that 
\begin{equation}\label{eq:ak}
b(\sigma^{\boldsymbol{e}^1}(y))^{-1} \cdot c(\boldsymbol{e}^1,y) \cdot b(y)=a
\end{equation}
for every $y\in X$. We define a function $c'\colon \mathbb{Z}^2 \times X\to \mathbb{G}$ by setting 
\[c'(\boldsymbol{m},y) = b(\sigma^{\boldsymbol{m}}(y))^{-1} \cdot c(\boldsymbol{m},y) \cdot
b(y)\] 
for all $\boldsymbol{m} \in \mathbb{Z}^2$. It is easy to verify that $c'$ is a cocycle and by definition it is cohomologous to $c$. {By \eqref{eq:ak} we have $c'(\boldsymbol{e}^1,y)=a$ for {every} $y\in X$.}
Recall that by the cocycle equation \eqref{eq;cocycle-def} we have

\[
c'(\boldsymbol{m},\sigma^{\boldsymbol{e}^1}(y))c'(\boldsymbol{e}^1,y)=c'(\boldsymbol{e}^1,\sigma^{\boldsymbol{m}}(y))c'(\boldsymbol{m},y)
\]
and therefore, by applying \eqref{eq:ak}
{on $y$ and $\sigma^{\boldsymbol{m}}(y)$}, we obtain that
{\[c'(\boldsymbol{m},\sigma^{\boldsymbol{e}^1}(y)) = a \cdot c'(\boldsymbol{m},y)\cdot a^{-1} \]}
for every $y\in X$.
Applying this formula recursively, for every $l\geq 1$ we have:
{\begin{equation} c'(\boldsymbol{m},\sigma^{l\boldsymbol{e}^1}(y))  = a ^{l} \cdot c'(\boldsymbol{m},y)\cdot a^{-l}
\label{eq:cmxakl}
\end{equation}}

for all $y \in X$ and $\boldsymbol{m} \in \mathbb{Z}^2$.
By symmetry, and because this equation is trivially true for $l=0$, it extends onto all $l\in \mathbb{Z}$.

\paragraph*{(iii) Consequences of the mixing assumption.}
Let us fix some
$\boldsymbol{m} \in \mathbb{Z}^2$ and set $g = c'(\boldsymbol{m},\cdot)$. Let $x_1, x_2\in X$. 

Since $g$ is continuous, for each $i=1,2$ there is an open set $\mathcal{O}_i$ that contains $x_i$ such that for all $x \in \mathcal{O}_i$, $g(x_i) = g(x)$.
Since $\sigma^{\boldsymbol{e}^1}$ is topologically mixing, there exists $l>0$ such that $\sigma^{l\boldsymbol{e}^1}(\mathcal{O}_1)\cap \mathcal{O}_i\neq \emptyset$ for $i=1,2$. Therefore there exist $y_1, y_2 \in \mathcal{O}_1$ such that 
$g(x_i) = g(\sigma^{l\boldsymbol{e}^1}(y_i))$.

 Since both $y_1,y_2$ are in $\mathcal{O}_1$, $g(y_1) = g(y_2)$. Furthermore, by \eqref{eq:cmxakl} we have $g(\sigma^{l\boldsymbol{e}^1}(y_i)) = a ^{l} \cdot g(y_i)\cdot a^{-l}$.  This implies that $g(x_1) = g(x_2)$. Since this holds for all $x_1, x_2\in X$, $g$ is constant on $X$.
 {We have just proved that $c$ is continuously cohomologous to the cocycle $c'$, which is such that $c'(\mathbf{m},\cdot)$ is constant for every $\mathbf{m}$. It is equivalent to say that $c$ is trivial. The proof is thus complete.}
\end{proof}

We are now able to prove the following:
\begin{proposition}\label{theorem.finite.case}
    Let $G$ be a finite undirected graph. If the even square group of $G$ is trivial and $X^2_G$ is mixing then every continuous cocycle on $X^2_G$ with values in a discrete group is trivial.
\end{proposition}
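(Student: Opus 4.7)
The plan is to deduce this as an essentially immediate consequence of Theorem~\ref{theorem.schmidt.mult}, by assembling the three ingredients prepared in Sections~\ref{section.even.square.group} and~\ref{section.specification}.

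First, I would invoke Lemma~\ref{lemma.mixing.bipartite} to conclude from the mixing of $X^2_G$ that $G$ is not bipartite. This allows me to fix any edge $(t_0,t_1)$ of $G$ and to consider the corresponding two-dimensional $(t_0,t_1)$-chessboard configuration $x \in X^2_G$, which will be the distinguished point relative to which all properties will be verified.

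Next, I would check the three hypotheses of Theorem~\ref{theorem.schmidt.mult} for this choice of $x$. Density of the Gibbs class, i.e.\ $X^2_G = \overline{\Delta_{X^2_G}(x)}$, is exactly the content of Proposition~\ref{proposition.equivalence.cover} applied to the non-bipartite graph $G$. The strip-gluing property of $X^2_G$ relative to $x$ is exactly the content of Lemma~\ref{lemma.specification}, which requires precisely the hypothesis that the even square group of $G$ is trivial. Topological mixing of the single transformation $\sigma^{\boldsymbol{e}^1}$ on $X^2_G$ follows from the topological mixing of the full $\mathbb{Z}^2$-action on $X^2_G$: given two nonempty open cylinders, one applies the standard mixing property to any sufficiently large translation vector along the first coordinate axis.

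With all three hypotheses satisfied, Theorem~\ref{theorem.schmidt.mult} directly gives that every continuous cocycle $c : \mathbb{Z}^2 \times X^2_G \rightarrow \mathbb{G}$ with discrete target group $\mathbb{G}$ is trivial, which is exactly the statement. I expect no real obstacle in this argument: all the substantive work has already been carried out in the preceding subsections (the pattern-completion characterization of square-decomposability used to establish strip-gluing, the density of Gibbs equivalence classes, and the Schmidt-style cohomological argument adapted to discrete groups), so the proof is a formal matching of hypotheses. The only mild care required is to note that $\mathbb{Z}^2$-topological mixing specializes to $\sigma^{\boldsymbol{e}^1}$-topological mixing, which is routine.
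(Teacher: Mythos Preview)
Your proposal is correct and follows essentially the same approach as the paper's proof: both reduce to Theorem~\ref{theorem.schmidt.mult} by invoking Lemma~\ref{lemma.mixing.bipartite}, Proposition~\ref{proposition.equivalence.cover}, and Lemma~\ref{lemma.specification} to verify the density of the Gibbs class and the strip-gluing property relative to a chessboard configuration, together with the observation that $\mathbb{Z}^2$-mixing implies mixing of $\sigma^{\boldsymbol{e}^1}$.
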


\begin{proof}
The proof is done by reduction to Theorem \ref{theorem.schmidt.mult}. Since $X^2_G$ is mixing, $(X^2_G,\sigma^{\boldsymbol{e}^1})$ is mixing. By Lemma \ref{lemma.mixing.bipartite} and Proposition \ref{proposition.equivalence.cover} (point 1) and Lemma~\ref{lemma.specification}, there exists $x \in X^2_G$ such that $X$ has the strip-gluing property relative to $x$ and such that $\Delta_{X^2_G}(x)$ is dense.  Thus $X^2_G$ satisfies the assumptions in Theorem \ref{theorem.schmidt.mult} and this completes the proof.
\end{proof}

\section[Proof of Theorem 1.1 \texorpdfstring{$ (\Leftarrow)$}{(<=)}, higher dimensions]{Proof of Theorem \ref{theorem: cohomological triviality} ($\Leftarrow$), higher dimensions\label{section.cohomology.higher.dim}}

In Section \ref{section.cocycle.infinite}, we have provided a proof of direction $(\Leftarrow)$ in Theorem \ref{theorem: cohomological triviality} for two-dimensional homshifts. In this section we extend this result to higher dimensions. This generalization is a direct consequence of Proposition \ref{proposition.non.trivial.sqgrp.cocycle} and the {following theorem, whose proof 
can be found at the end of the section.}

\begin{theorem}\label{theorem.higherd.homshift.trivial}
 If $G$ is such that $X^2_G$ has trivial cohomology and is mixing, then for $d > 2$, $X^d_G$ has also trivial cohomology.
\end{theorem}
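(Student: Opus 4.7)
The plan is to extend the two-dimensional argument of Section \ref{section.cocycle.infinite} to $d > 2$. Since $X^2_G$ is mixing and cohomologically trivial, the two-dimensional case of Theorem \ref{theorem: cohomological triviality}, together with Proposition \ref{proposition.non.trivial.sqgrp.cocycle}, gives that the even square group of $G$ is trivial; this is the hypothesis I will use throughout.

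The first step is to establish a $d$-dimensional analog of the strip-gluing property (Definition \ref{def:stripgluing}): for a chessboard configuration $x'' \in X^d_G$, for all $r \geq 0$ and all $x, x' \in \Delta_{X^d_G}(x'')$ that coincide on $\boldsymbol{B}^d(r)$, there exists $y \in \Delta_{X^d_G}(x'')$ that coincides with $x$ on $\llbracket -r, +\infty \llbracket \times \llbracket -r, r \rrbracket^{d-1}$ and with $x'$ on $\rrbracket -\infty, r \rrbracket \times \llbracket -r, r \rrbracket^{d-1}$. I would prove this by reducing to 2D slices parallel to the $(\boldsymbol{e}^1, \boldsymbol{e}^2)$-plane, where Lemma \ref{lemma.specification} applies, and treating the remaining $d-2$ coordinates parametrically; the chessboard structure of $x''$ ensures that outside a large box each slice agrees with a fixed chessboard pattern, so the 2D completions chosen on neighboring slices can be made consistent with the adjacency constraints in the transverse directions.

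Next, I would generalize Proposition \ref{prop.main} and Theorem \ref{theorem.schmidt.mult} to dimension $d$. Inspecting their 2D proofs, the only ingredients actually used are: strip-gluing along $\boldsymbol{e}^1$; density of a Gibbs class (Proposition \ref{proposition.equivalence.cover} already provides this in any dimension); topological mixing of $\sigma^{\boldsymbol{e}^1}$, which is implied by mixing of the full $\mathbb{Z}^d$-action; and the cocycle equation between $\boldsymbol{e}^1$ and $\boldsymbol{e}^2$. With the higher-dimensional strip-gluing in hand, the proofs transport to $d > 2$ essentially line by line, yielding, for any continuous cocycle $c$, a continuous transfer function $b$ and an element $a \in \mathbb{G}$ such that the cohomologous cocycle $c'(\boldsymbol{m}, y) = b(\sigma^{\boldsymbol{m}}(y))^{-1} c(\boldsymbol{m}, y) b(y)$ satisfies $c'(\boldsymbol{e}^1, \cdot) \equiv a$. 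To conclude triviality, for each $i \in \{2, \ldots, d\}$ the cocycle equation in the $(\boldsymbol{e}^1, \boldsymbol{e}^i)$ pair combined with $c'(\boldsymbol{e}^1, \cdot) = a$ yields $c'(\boldsymbol{e}^i, \sigma^{\boldsymbol{e}^1}(y)) = a \cdot c'(\boldsymbol{e}^i, y) \cdot a^{-1}$, exactly as in step (iii) of the proof of Theorem \ref{theorem.schmidt.mult}; iterating and using topological mixing of $\sigma^{\boldsymbol{e}^1}$ together with continuity forces $c'(\boldsymbol{e}^i, \cdot)$ to be constant, so $c'$ is a group homomorphism and $c$ is trivial.

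The main obstacle is the higher-dimensional strip-gluing property: the 2D argument relies on the rectangular pattern-completion statement of Lemma \ref{lemma.box}, and a naive slicing leaves open whether the independent 2D completions chosen on different slices piece together into a globally admissible $d$-dimensional pattern. Resolving this requires either a direct higher-dimensional analog of Lemma \ref{lemma.box}, or a careful iterative procedure that uses the chessboard reference configuration to enforce compatibility in the $d-2$ transverse directions; once this is in place, the rest of the argument follows from the 2D proof without essential change.
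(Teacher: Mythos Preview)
Your approach differs substantially from the paper's and, as you yourself flag in the last paragraph, is not complete: the higher-dimensional strip-gluing lemma is stated but not proved, and the ``slice-and-glue'' argument you sketch does not obviously produce transversely compatible completions. This is a genuine gap, not a routine detail; the 2D proof of Lemma~\ref{lemma.specification} relies on Lemma~\ref{lemma.box}, which fills a planar rectangle, and there is no $d$-dimensional analogue available in the paper. Remark~\ref{remark.specification} even shows that the closely related cone-specification property can fail for graphs with trivial even square group, so one should not expect such higher-dimensional gluing statements to be automatic.

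The paper circumvents this entirely. Instead of proving a $d$-dimensional strip-gluing property, it reduces the $d$-dimensional cocycle problem to $(d-1)$-dimensional ones via projective subdynamics: each $n$-strip $\rho_n(X^d_G)$ is conjugate to the $(d-1)$-dimensional homshift $X^{d-1}_{G^n}$, and Proposition~\ref{thm:eventoeven} shows that triviality of the even square group passes from $G$ to every $G^n$. Proposition~\ref{lemma.induction.d-1.to.d} then says that if all projective strips of a mixing subshift are cohomologically trivial, so is the subshift. An induction on $d$, with the $d=2$ case as base, finishes the argument. The advantage is that one never leaves the 2D toolbox of Section~\ref{section.cocycle.infinite}: the hard work is the purely combinatorial Proposition~\ref{thm:eventoeven}, plus the soft Proposition~\ref{lemma.induction.d-1.to.d}, which uses only mixing and the cocycle equation. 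Your route, if the $d$-dimensional strip-gluing could be established, would give a more direct proof, but as written it leaves the essential difficulty unresolved.
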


This proof relies on two facts: if all two-dimensional projective strips of a higher-dimensional homshift (introduced in Section \ref{section.proj.subdynamics} \cite{MR3314811}) are cohomologically trivial, then the homshift itself is also cohomologically trivial (Proposition \ref{lemma.induction.d-1.to.d}); and if the two-dimensional homshift is cohomologically trivial, it is also the case of any two-dimensional projective strip of any higher-dimensional homshift on the same graph (Proposition \ref{thm:eventoeven}). Though not explicitly mentioned in this exact form, we believe this was the main idea behind \cite[Remark \textbf{10.4}]{Schmidt95}. 

\subsection{Projective subdynamics of homshifts\label{section.proj.subdynamics}}

\begin{definition}
    For a $d$-dimensional subshift $X$ on alphabet $\mathcal{A}$ with $d > 1$ and $n \ge 1$, we denote by \notationidx{$\rho_n(X)$}{$n$-width projective subdynamics of a subshift $X$}$\ \subset \left(\mathcal{A}^{n}\right)^{\mathbb{Z}^{d-1}}$ the $(d-1)$-dimensional subshift that contains all configurations $((x_{\boldsymbol{u}_1,\ldots, \boldsymbol{u}_{d-1},k})_{0\leq k < n})_{ \boldsymbol{u} \in \mathbb{Z}^{d-1}}$ for $x\in X$. We call this subshift the projective $n$-strip of $X$.
\end{definition}

\begin{notation}
    For all $n \ge 1$, denote by \notationidx{$G^n$}{the graph of the walks of length $n-1$ on $G$} the graph whose vertices are walks of length $n-1$ on $G$, and which has an edge between $p$ and $q$ when there is an edge in $G$ between $p_i$ and $q_i$ for all $i \in \llbracket 0, n-1 \rrbracket$. This definition is illustrated on Figure~\ref{figure.walk.graph}.
\end{notation}

The following straightforward result implies that for all $n$, the projective $n$-strip of a homshift is a homshift. Details are left to the reader.

\begin{lemma}
    For every finite undirected graph $G$ and all $d > 1$, $\rho_n(X^d_G)$ and $X^{d-1}_{G^{n}}$ are conjugate.
\end{lemma}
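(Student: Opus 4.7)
The plan is to exhibit an explicit conjugacy $\Phi : \rho_n(X^d_G) \to X^{d-1}_{G^n}$ which is essentially the identity map on symbols, reinterpreted through the change of alphabet from $\mathcal{A}^n$ (viewed as $n$-tuples of vertices of $G$) to $V_{G^n}$ (viewed as walks of length $n-1$ in $G$). For $y \in \rho_n(X^d_G) \subset (\mathcal{A}^n)^{\mathbb{Z}^{d-1}}$ and $\boldsymbol{u} \in \mathbb{Z}^{d-1}$, set $\Phi(y)_{\boldsymbol{u}} \coloneqq (y_{\boldsymbol{u},0}, \ldots, y_{\boldsymbol{u}, n-1})$, viewed as a vertex of $G^n$. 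The inverse is given by the same formula read in the opposite direction. Both maps are sliding block codes of radius zero, so continuity and shift-equivariance with respect to the $\mathbb{Z}^{d-1}$-action are immediate.

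The bulk of the argument is simply checking that the two sets of constraints match. First I would verify that $\Phi$ maps $\rho_n(X^d_G)$ into $X^{d-1}_{G^n}$. If $y = ((x_{\boldsymbol{u},0}, \ldots, x_{\boldsymbol{u},n-1}))_{\boldsymbol{u}}$ comes from some $x \in X^d_G$, then adjacency of $x_{\boldsymbol{u},k}$ and $x_{\boldsymbol{u},k+1}$ in $G$ for all $k < n-1$ says exactly that the tuple $\Phi(y)_{\boldsymbol{u}}$ is a walk of length $n-1$ in $G$, hence a valid vertex of $G^n$; and adjacency of $x_{\boldsymbol{u},k}$ and $x_{\boldsymbol{u}+\boldsymbol{e}^i,k}$ in $G$ for every $k$ and every $i \in \llbracket 1, d-1 \rrbracket$ says exactly that $\Phi(y)_{\boldsymbol{u}}$ and $\Phi(y)_{\boldsymbol{u}+\boldsymbol{e}^i}$ are adjacent in $G^n$, by the very definition of the edges of $G^n$.

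Conversely, given $z \in X^{d-1}_{G^n}$, each $z_{\boldsymbol{u}}$ is a walk of length $n-1$, and the edge condition in $G^n$ gives adjacency of corresponding entries in $G$. Defining $x_{\boldsymbol{u}, k} \coloneqq (z_{\boldsymbol{u}})_k$ for $(\boldsymbol{u}, k) \in \mathbb{Z}^{d-1} \times \llbracket 0, n-1 \rrbracket$ and extending to $\mathbb{Z}^d$ by requiring $x \in X^d_G$ is not quite immediate, since one needs to fill in the other slabs $\mathbb{Z}^{d-1} \times \llbracket kn, (k+1)n - 1 \rrbracket$; however, this is not needed for the conjugacy, since we only need a well-defined inverse on the image. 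Indeed $\Phi$ is clearly injective (it is a bijection of alphabets applied coordinatewise), so it suffices to observe that any $z \in X^{d-1}_{G^n}$ arises as $\Phi(y)$ where $y_{\boldsymbol{u}} \coloneqq z_{\boldsymbol{u}}$ viewed as a tuple in $\mathcal{A}^n$, and that this $y$ is the image of the configuration $x \in X^d_G$ obtained by piling translated copies of $z$ (of which existence is guaranteed because $G^n$ is reflexive enough that one can always extend walks; more precisely, the compatibility conditions are exactly those of being a graph homomorphism from $\mathbb{Z}^d$ to $G$).

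The main obstacle, if any, is purely notational: keeping straight that a vertex of $G^n$ is a walk in $G$, while simultaneously viewing it as an element of $\mathcal{A}^n$, and that the $(d-1)$-dimensional adjacency in $G^n$ repackages exactly the $d$-dimensional adjacency relations in $G$ that lie transverse to the last coordinate. Since both directions of the bijection and both continuity and shift-equivariance are radius-zero block maps, the conjugacy follows once this dictionary is in place.
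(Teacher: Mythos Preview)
The paper leaves this proof to the reader, and your approach --- the radius-zero recoding between $\mathcal{A}^n$ and $V_{G^n}$ --- is exactly the intended one. Injectivity, continuity, and equivariance are immediate as you say.

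The one place your argument wobbles is surjectivity. You correctly identify that, given $z \in X^{d-1}_{G^n}$, the induced partial configuration on $\mathbb{Z}^{d-1}\times\llbracket 0,n-1\rrbracket$ must extend to some $x \in X^d_G$; but then you write that this extension ``is not needed for the conjugacy,'' which is not right: membership of $y$ in $\rho_n(X^d_G)$ is \emph{by definition} the existence of such an $x$, so surjectivity of $\Phi$ requires exactly this. The phrases ``piling translated copies of $z$'' and ``$G^n$ is reflexive enough'' do not amount to an argument. A clean fix: for $n\ge 2$, extend by reflection in the last coordinate (set $x_{\boldsymbol{u},n+j}=x_{\boldsymbol{u},n-2-j}$ and iterate in both directions); for $n=1$, set $x_{\boldsymbol{u},k}=z_{\boldsymbol{u}+k\boldsymbol{e}^1}$. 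Either construction is a one-line check, so the lemma really is as routine as the paper claims.
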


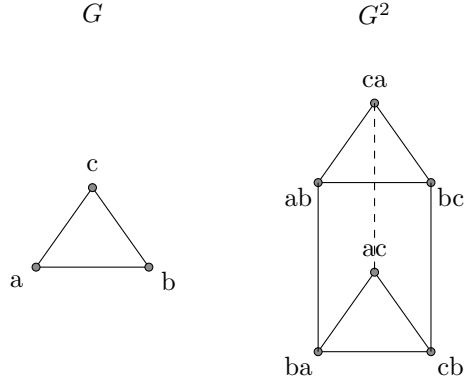
\begin{figure}[!ht]
\begin{center}
\begin{tikzpicture}[scale=0.75]
    \node at  (-0.35,-0.25) {a};
    \node at  (2.35,-0.25) {b};
    \node at  (1,1.8) {c};
        \draw (0,0) -- (2,0) -- (1,1.41) -- (0,0);
        \draw[fill=gray!90] (0,0) circle (2pt);
        \draw[fill=gray!90] (2,0) circle (2pt);
        \draw[fill=gray!90] (1,1.41) circle (2pt);
        \node at (1,4.5) {$G$};
        \begin{scope}[xshift=5cm,yshift=1.5cm]
            \draw (0,0) -- (2,0) -- (1,1.41) -- (0,0);
            \node at  (-0.35,-0.25) {ab};
    \node at  (2.35,-0.25) {bc};
    \node at  (1,1.8) {ca};
        \draw[fill=gray!90] (0,0) circle (2pt);
        \draw[fill=gray!90] (2,0) circle (2pt);
        \draw[fill=gray!90] (1,1.41) circle (2pt);
        \end{scope}
        \begin{scope}[xshift=5cm,yshift=-1.5cm]
            \draw (0,0) -- (2,0) -- (1,1.41) -- (0,0);
            \node at  (-0.35,-0.25) {ba};
    \node at  (2.35,-0.25) {cb};
    \node at  (1,1.8) {ac};
        \draw[fill=gray!90] (0,0) circle (2pt);
        \draw[fill=gray!90] (2,0) circle (2pt);
        \draw[fill=gray!90] (1,1.41) circle (2pt);
        \end{scope}
        \begin{scope}[xshift=5cm,yshift=1.5cm]
            \draw[dashed] (1,1.41) -- (1,1.41-3);
            \draw (0,0) -- (0,-3);
            \draw (2,0) -- (2,-3);
            \node at (1,3) {$G^2$};
        \end{scope}
    \end{tikzpicture}
\end{center}
\caption{Illustration of the definition of $G^n$ for $n=2$.}
\label{figure.walk.graph}
\end{figure}

\subsection{Proof of Theorem \ref{theorem.higherd.homshift.trivial}
\label{section.from.2tod}}

\begin{notation}
    For any $n,m \ge 0$ and any walk $w$ of length $m$ on $G^{n+1}$, the \textbf{dual} of $w$, denoted by \notationidx{$w^{*}$}{dual walk of a walk $w$}, is the walk of length $n$ on $G^{m+1}$ such that for all $i \in \llbracket 0 , {m}\rrbracket$ and $j \in \llbracket 0, n\rrbracket$, $(w_i)_j = (w^{*}_{j})_{i}$.
    This definition is illustrated on Figure~\ref{figure.cutting.gamma}.
\end{notation}

\begin{proposition}\label{thm:eventoeven}
    If the even square group of $G$ is trivial, then for all $n > 1$, the even square group of $G^n$ is also trivial.
\end{proposition}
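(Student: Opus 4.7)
The plan is to apply Lemma~\ref{lemma.box}: given an even non-backtracking cycle $\gamma$ in $G^n$ of length $m$, its square-decomposability is equivalent to the existence of a locally admissible 2D $G^n$-pattern $R$ on a rectangle $\llbracket 0, \nu \rrbracket \times \llbracket 0, 2N \rrbracket$ with the specified boundary involving $\gamma$ and a length-2 cycle $t$ on $\gamma_0$. The key observation is that, since a vertex of $G^n$ is a walk of length $n-1$ in $G$, such an $R$ is equivalent to a locally admissible 3D $G$-homomorphism $P$ on $\llbracket 0, \nu \rrbracket \times \llbracket 0, 2N \rrbracket \times \llbracket 0, n-1 \rrbracket$, with prescribed values on four of the six 2D faces of the box (the four ``side'' faces).

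I would first pick $t = (\gamma_0, t_1, \gamma_0)$ with $t_1$ any walk of length $n-1$ in $G$ pointwise adjacent to $\gamma_0$; such $t_1$ exists by connectedness of $G$. The construction of $P$ would then proceed by induction on $n$. The base case $n = 1$ is immediate since $G^1 = G$ and $\mathcal{E}^\square_G$ is trivial by hypothesis. For the inductive step from $n-1$ to $n$, the projection $\pi : G^n \to G^{n-1}$ dropping the last coordinate is a graph homomorphism, and $\pi(\gamma)$ is a cycle of even length $m$ in $G^{n-1}$. By the inductive hypothesis and Lemma~\ref{lemma.box} applied in $G^{n-1}$, there exists a locally admissible 2D $G^{n-1}$-pattern $Q$ filling $\pi(\gamma)$, equivalently a 3D $G$-pattern on $\llbracket 0, \nu \rrbracket \times \llbracket 0, 2N \rrbracket \times \llbracket 0, n-2 \rrbracket$. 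The remaining task is then to extend $Q$ by an additional 2D $G$-slice $P_{n-1}$ at height $n-1$, which must be locally admissible, pointwise $G$-adjacent to the top slice of $Q$, and satisfy the prescribed 1D boundary obtained from the last coordinate of $\gamma$ and of $t_1$.

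The main obstacle is precisely this extension step. The prescribed 1D boundary of $P_{n-1}$, regarded as a cycle in $G$, has $\varphi$-reduction equal to $\gamma^*_{n-1}$, the last dual cycle of $\gamma$, which has even length $m$ and is square-decomposable in $G$ by triviality of $\mathcal{E}^\square_G$; thus Lemma~\ref{lemma.box} applied in $G$ provides a filling of the boundary. However, it is not clear \emph{a priori} that such a filling can be chosen to be pointwise $G$-adjacent to the top slice of $Q$. The strategy to overcome this is to construct the square-decomposition of $\gamma^*_{n-1}$ in $G$ in a way that is synchronized with the square-decomposition implicit in the top slice of $Q$: each elementary square move in the $P_{n-1}$ decomposition should be paired with an appropriate ``parallel'' move in $Q$'s top slice, so that $(Q_{n-2}, P_{n-1})$ forms a consistent 3D $G$-pattern of thickness 2. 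The backtrack-padding freedom in Remark~\ref{remark:any_t} allows one to ``stall'' on one slice while advancing on the other, enabling the synchronization. Rigorously verifying this compatibility is the core technical difficulty of the proof, and is where the triviality of $\mathcal{E}^\square_G$ is used beyond its direct application to individual fillings.
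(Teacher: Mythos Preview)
Your inductive framing is natural, but the extension step you flag as the ``core technical difficulty'' is a genuine gap, not just a detail to be filled in. The inductive hypothesis hands you \emph{some} filling $Q$ of $\pi(\gamma)$ in $G^{n-1}$, with no control whatsoever over its top slice $Q_{n-2}$. Your proposed synchronization would have to work against this arbitrary $Q_{n-2}$, or else you would need to go back and rebuild $Q$ together with $P_{n-1}$ --- but that is no longer an induction on $n$, it is a simultaneous construction, and you have not said what stronger inductive statement would make it go through. The backtrack-padding freedom of Remark~\ref{remark:any_t} lets you stall, but stalling on $P_{n-1}$ while $Q_{n-2}$ evolves (or vice versa) does not obviously keep the two slices pointwise adjacent: adjacency must hold at every step, and a backtrack $aba$ in one slice requires the neighboring slice to perform a walk $a'b'a'$ with each $a',b'$ adjacent to $a,b$ --- this constrains rather than frees the other slice.

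The paper sidesteps the issue entirely with a direct, non-inductive construction. The key idea is a two-phase filling. In the first phase (a ``staircase'' over $n+1$ rows), the $n$ dual coordinates $c'_0,\dots,c'_{n-1}$ of the padded cycle are shifted one step per row, so that after $n$ rows every coordinate is either $c'_{n-1}$ or its circular shift $\omega(c'_{n-1})$; adjacency between consecutive rows holds because consecutive $c'_i,c'_{i+1}$ are neighbors (they are dual to a walk in $G^n$) and because a cycle is adjacent to its own circular shift. In the second phase, one applies the triviality of $\mathcal E^\square_G$ \emph{once}, to the single cycle $\gamma^*_{n-1}$ in $G$, obtaining a $2$D $G$-pattern $R'$ via Lemma~\ref{lemma.box}; each row $w_j$ of $R'$ is then replicated across the $n$ coordinates as $w_j,\omega(w_j),w_j,\dots$, which is again pointwise adjacent both within a row and between consecutive rows. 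The upshot is that the ``compatibility between slices'' problem is dissolved by first homogenizing all slices to depend only on $\gamma^*_{n-1}$, so a single $G$-decomposition suffices.
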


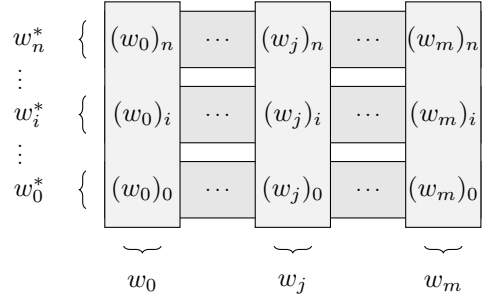
\begin{figure}[!ht]
    \centering
        \begin{tikzpicture}[scale=0.5]
        \node at (0,0) {$w^{*}_0$};
        \node[scale=0.8] at (-0.25,1.1) {$\vdots$};
        \node at (0,2) {$w^{*}_i$};
        \node[scale=0.8] at (-0.25,3.1) {$\vdots$};
        \node at (0,4) {$w^{*}_{n}$};

        \draw[decorate, decoration={brace, amplitude=2.5pt}] (1.5,3.5) -- (1.5,4.5);
        \draw[decorate, decoration={brace, amplitude=2.5pt}] (1.5,1.5) -- (1.5,2.5);
        \draw[decorate, decoration={brace, amplitude=2.5pt}] (1.5,-0.5) -- (1.5,0.5);

        \draw[fill=gray!20] (2,-0.75) rectangle (12,0.75);
        \draw[fill=gray!20] (2,1.25) rectangle (12,2.75);
        \draw[fill=gray!20] (2,3.25) rectangle (12,4.75);

         \draw[fill=gray!10] (2,-1) rectangle (4,5);
        \draw[fill=gray!10] (6,-1) rectangle (8,5);
        \draw[fill=gray!10] (10,-1) rectangle (12,5);
        
        \node at (3,4) {$(w_0)_n$};
        \node at (3,2) {$(w_0)_i$};
        \node at (3,0) {$(w_0)_0$};

        \node[scale=0.8] at (5,4) {$\hdots$};
        \node[scale=0.8] at (5,2) {$\hdots$};
        \node[scale=0.8] at (5,0) {$\hdots$};

         \node[scale=0.8] at (9,4) {$\hdots$};
        \node[scale=0.8] at (9,2) {$\hdots$};
        \node[scale=0.8] at (9,0) {$\hdots$};

        \node at (7,4) {$(w_j)_n$};
        \node at (7,2) {$(w_j)_i$};
        \node at (7,0) {$(w_j)_0$};

         \node at (11,4) {$(w_{m})_n$};
        \node at (11,2) {$(w_{m})_i$};
        \node at (11,0) {$(w_{m})_0$};

        \draw[decorate, decoration={brace, amplitude=2.5pt,mirror}] (2.5,-1.5) -- (3.5,-1.5);
        \draw[decorate, decoration={brace, amplitude=2.5pt,mirror}] (6.5,-1.5) -- (7.5,-1.5);
        \draw[decorate, decoration={brace, amplitude=2.5pt,mirror}] (10.5,-1.5) -- (11.5,-1.5);

        \node at (3,-2.5) {$w_0$};
        \node at (7,-2.5) {$w_j$};
        \node at (11,-2.5) {$w_{m}$};
    \end{tikzpicture}
    \caption{Illustration of the definition of dual walk.}
    \label{figure.cutting.gamma}
\end{figure}

\begin{proof}
Consider $\gamma$ a cycle of even length in $G^n$, and fix $s$ a cycle of length 2 on $\gamma_0$ in $G^n$. Apply Lemma~\ref{lemma.box} (see also Remark~\ref{remark:any_t}) to the cycle $\gamma^{*}_{n-1}$ and graph $G$, and -- to avoid conflict of notations -- denote by $k_+$, $n^*$ and $R'$ the integers $k$ and $n$ and the pattern $R$ given by the lemma. Set $n_+ \coloneqq n + 2 n^\ast$. Let $c'_0 , \ldots , c'_{n-1}$ be the dual walk of the cycle $s^{k_+} \odot \gamma \odot s^{k_{+}}$. Recall that for any cycle $p$, $\omega(p)= p_1 \ldots p_{l(p)} p_1$ is the circular shift of $p$. Note that for each $i$, 
$c'_{i} = t_i ^{k_{+}} \odot \gamma^{*}_{i} \odot t_i^{k_{+}}$,  
where $t_i$ is some cycle of length 2.

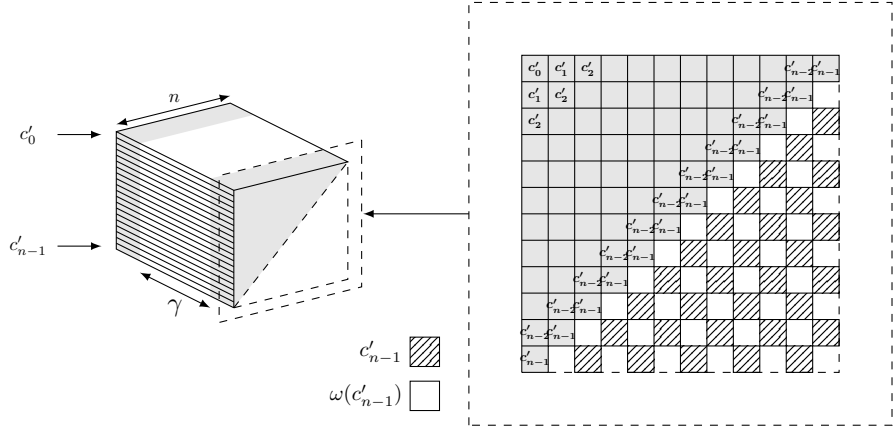
\begin{figure}[!ht]
    \centering
    \begin{tikzpicture}[scale=0.625]
    \begin{scope}[scale=1.25]
    \fill[gray!20] (-2,3) -- (-2,1) -- (-1.6,0.8) -- (-1.6,2.8) -- (2.4 - 4 + 1.94,3.8 - 1 + 0.485) -- (2-4 + 1.94,4-1 + 0.485) -- (-2,3);
    \fill[gray!20] (0,0) -- (0,2) -- (1.94,2+0.485) -- (0,0);
    \begin{scope}[xshift=1.6cm,yshift=-0.8cm]
        \fill[gray!20] (-2,3) -- (-2,1) -- (-1.6,0.8) -- (-1.6,2.8) -- (2.4 - 4 + 1.94,3.8 - 1 + 0.485) -- (2-4 + 1.94,4-1 + 0.485) -- (-2,3);
    \end{scope}
    \draw (0,0) -- (0,2) -- (-2,3) -- (-2,1) -- (0,0);
    \foreach \x in {1,...,19} {
    \draw (0,0.1*\x) -- (-2,1+0.1*\x);
    }
    \draw[-latex] (-3,1.05) -- (-2.25,1.05);
    \node[scale=0.7] at (-3.5,1.05) {$c'_{n-1}$};
    \node[scale=0.7] at (-3.5,2.95) {$c'_0$};
    \draw[-latex] (-3,2.95) -- (-2.25,2.95);
    \draw[dashed] (0,0) -- (1.94,2+0.485);
    \draw[dashed] (0,0) -- (1.94,0.485) -- (1.94,2+0.485);

    \draw[latex-latex] (-1.6,0.6) -- (-0.4,0);
    \node at (-1,0) {$\gamma$};

    \draw[dashed] (1.25*1.94-0.25,2+1.25*0.485+0.25) -- (1.25*1.94-0.25,2+1.25*0.485+0.25-2.5) -- (-0.25,-0.25) -- (-0.25,2.25) -- (1.25*1.94-0.25,2+1.25*0.485+0.25);

    \draw[latex-latex] (-2,3+0.125) -- (-2+1.94,3+0.485+0.125);
    \node[scale=0.7] at (-2+0.99,3+0.2425+0.325) {$n$};
    
    \draw (-2,3) -- (-2+1.94,3+0.485) -- (1.94,2+0.485) -- (0,2);

    \draw[pattern=north east lines] (3,-1) rectangle (3.5,-0.5);
    \node[scale=0.8] at (2.5,-0.75) {$c'_{n-1}$};
    \draw (3,-1.25) rectangle (3.5,-1.75);
    \node[scale=0.8] at (2.25,-1.5) {$\omega(c'_{n-1})$};

    \end{scope}
    \begin{scope}[xshift=5cm,yshift=-2.5cm,scale=2.25]
    \draw[-latex] (0,2) -- (-1,2);
        \draw[dashed] (0,0) rectangle (4,4);
        \draw[fill=gray!20] (0.5,0.5) -- (0.75,0.5) -- (0.75,0.75) -- (1,0.75) -- (1,1) -- (1.25,1) -- (1.25,1.25) -- (1.5,1.25) -- (1.5,1.5) -- (1.75,1.5) -- (1.75,1.75) -- (2,1.75) -- (2,2) -- (2.25,2) -- (2.25,2.25) -- (2.5,2.25) -- (2.5,2.5) -- (2.75,2.5) -- (2.75,2.75) -- (3,2.75) -- (3,3) -- (3.25,3) -- (3.25,3.25) -- (3.5,3.25) -- (3.5,3.5) -- (0.5,3.5) -- (0.5,0.5);
        \foreach \x in {0.75,1,1.25,1.5,1.75,2,2.25,2.5,2.75,3,3.25} {
        \draw (\x,3.5) -- (\x,\x);
        \draw (0.5,\x) -- (\x,\x);
        }
        \draw[dashed] (0.75,0.5) -- (3.5,0.5) -- (3.5,3.25);
        \foreach \x in {0,1,2,3,4,5,6,7,8,9,10} {
        \node[scale=0.5] at (0.625+0.25*\x,0.625+0.25*\x) {$\boldsymbol{c'_{n-1}}$};
        \node[scale=0.5] at (0.625+0.25*\x,0.875+0.25*\x) {$\boldsymbol{c'_{n-2}}$};
        }
        \node[scale=0.5] at (0.625+0.25*11,0.625+0.25*11) {$\boldsymbol{c'_{n-1}}$};
        \node[scale=0.5] at (0.625,0.625+0.25*11) {$\boldsymbol{c'_{0}}$};
        \node[scale=0.5] at (0.625,0.625+0.25*10) {$\boldsymbol{c'_{1}}$};
        \node[scale=0.5] at (0.625+0.25,0.625+0.25*11) {$\boldsymbol{c'_{1}}$};
        \node[scale=0.5] at (0.625,0.625+0.25*9) {$\boldsymbol{c'_{2}}$};
        \node[scale=0.5] at (0.625+0.25,0.625+0.25*10) {$\boldsymbol{c'_{2}}$};
        \node[scale=0.5] at (0.625+2*0.25,0.625+0.25*11) {$\boldsymbol{c'_{2}}$};
        \foreach \x in {0,1,2,3,4} {
        \draw[pattern=north east lines] (1+0.5*\x,0.5) rectangle (1.25+0.5*\x,0.75);
        }
        \foreach \x in {0,1,2,3,4} {
        \draw[pattern=north east lines] (1+0.5*\x+0.25,0.5+0.25) rectangle (1.25+0.5*\x+0.25,0.75+0.25);
        }
        \foreach \x in {0,1,2,3} {
        \draw[pattern=north east lines] (1+0.5*\x+0.5,0.5+0.5) rectangle (1.25+0.5*\x+0.5,0.75+0.5);
        }
        \foreach \x in {0,1,2,3} {
        \draw[pattern=north east lines] (1+0.5*\x+0.75,0.5+0.75) rectangle (1.25+0.5*\x+0.75,0.75+0.75);
        }
        \foreach \x in {0,1,2} {
        \draw[pattern=north east lines] (1+0.5*\x+1,0.5+1) rectangle (1.25+0.5*\x+1,0.75+1);
        }
        \foreach \x in {0,1,2} {
        \draw[pattern=north east lines] (1+0.5*\x+1.25,0.5+1.25) rectangle (1.25+0.5*\x+1.25,0.75+1.25);
        }
        \foreach \x in {0,1} {
        \draw[pattern=north east lines] (1+0.5*\x+1.5,0.5+1.5) rectangle (1.25+0.5*\x+1.5,0.75+1.5);
        }
        \foreach \x in {0,1} {
        \draw[pattern=north east lines] (1+0.5*\x+1.75,0.5+1.75) rectangle (1.25+0.5*\x+1.75,0.75+1.75);
        }
        \draw[pattern=north east lines] (1+2,0.5+2) rectangle (1.25+2,0.75+2);
        \draw[pattern=north east lines] (1+2.25,0.5+2.25) rectangle (1.25+2.25,0.75+2.25);
    \end{scope}
    \end{tikzpicture}
    \caption{Illustration for Step 1 of the construction of $R$ in the proof of Proposition \ref{thm:eventoeven}. We represent $R$ as a three-dimensional pattern on alphabet $G$, although it is formally a two-dimensional pattern on alphabet $G^n$.
    }
    \label{fig:thm.eventoeven}
\end{figure}

\begin{figure}[!ht]
    \centering
    \begin{tikzpicture}[scale=0.8]
    \begin{scope}[scale=1.25]
    \fill[gray!20] (-2,3) -- (-2,1) -- (-1.6,0.8) -- (-1.6,2.8) -- (2.4 - 4 + 2* 1.94,3.8 - 1 + 2* 0.485) -- (2-4 + 2*1.94,4-1 + 2*0.485) -- (-2,3);
    \fill[gray!20] (0,0) -- (0,2) -- (2*1.94,2+2*0.485) -- (2*1.94,2*0.485) -- (0,0);
    \begin{scope}[xshift=1.6cm,yshift=-0.8cm]
        \fill[gray!20] (-2,3) -- (-2,1) -- (-1.6,0.8) -- (-1.6,2.8) -- (2.4 - 4 + 2*1.94,3.8 - 1 + 2*0.485) -- (2-4 + 2*1.94,4-1 + 2*0.485) -- (-2,3);
    \end{scope}

        \fill[gray!50] (1.94,0.485) -- (-2+1.94,1+0.485) -- (-2+1.94,1+0.485+0.1) -- (-2+2*1.94,1+2*0.485+0.1) -- (2*1.94,2*0.485+0.1) -- (2*1.94,2*0.485) -- (1.94,0.485);
    \draw (0,0) -- (0,2) -- (-2,3) -- (-2,1) -- (0,0);
    \draw (0,0) -- (1.94,0.485) -- (1.94,2+0.485);

    \draw[latex-latex] (-1.6,0.6) -- (-0.4,0);
    \node at (-1,0) {$\gamma$};

    \draw[latex-] (-3+7,1.225) -- (-2.25+7,1.525);
    \draw[latex-] (-3+7,1.025) -- (-2.25+7,0.725);
    \draw[latex-] (-3+7,1.125) -- (-2.25+7,1.125);
    \node[scale=0.6] at (-1.75+7,0.725) {$R'$};
    \node[scale=0.6] at (-1.75+7,1.125) {$\omega(R')$};
    \node[scale=0.6] at (-1.75+7,1.525) {$R'$};
    \node[scale=0.6] at (-2.5+7,1.925) {$\vdots$};

    \draw (-2+1.94,3+0.485) -- (-2+2*1.94,3+2*0.485) -- (2*1.94,2+2*0.485) -- (1.94,2+0.485);

    \draw[dashed] (-2,1) -- (-2+2*1.94,1+2*0.485) -- (2*1.94,2*0.485);
    \draw[dashed] (-2+2*1.94,1+2*0.485) -- (-2+2*1.94,3+2*0.485);
    \draw[dashed] (-2+1.94,1+0.485) -- (-2+1.94,3+0.485);
    \draw[dashed] (-2+1.94,1+0.485) --  (1.94,0.485);
    
    \draw[dashed] (-2+1.94,1+0.485+0.1) --  (1.94,0.485+0.1);
    \draw[dashed] (-2+1.94,1+0.485+0.2) --  (1.94,0.485+0.2);
    \draw[dashed] (-2+1.94,1+0.485+0.3) --  (1.94,0.485+0.3);

    \draw[dashed] (-2+1.94,1+0.485+0.1) --  (-2+2*1.94,1+2*0.485+0.1);
    \draw[dashed] (-2+1.94,1+0.485+0.2) --  (-2+2*1.94,1+2*0.485+0.2);
    \draw[dashed] (-2+1.94,1+0.485+0.3) --  (-2+2*1.94,1+2*0.485+0.3);

    \draw[dashed] (-2+2*1.94,1+2*0.485+0.1) -- (2*1.94,2*0.485+0.1);
    \draw[dashed] (-2+2*1.94,1+2*0.485+0.2) -- (2*1.94,2*0.485+0.2);
    \draw[dashed] (-2+2*1.94,1+2*0.485+0.3) -- (2*1.94,2*0.485+0.3);

    \draw[latex-latex] (-2+2*1.94,3+2*0.485+0.125) -- (-2+1.94,3+0.485+0.125);

    \draw[dashed] (-2 + 1.5* 1.94,3+1.5*0.485) -- (1.5*1.94,2+1.5*0.485);
    \node[scale=0.7] at (-1 + 1.5* 1.94+0.2,3+1*0.485-0.1) {$w_j$}; 
    \draw (-2+1.94,3+0.485) -- (-2+1.94,3+0.485);
    \node[scale=0.7] at (-2+0.99+1.94,3+0.485+0.2425+0.325) {$2n_{*}$};
    
    \draw (-2,3) -- (-2+1.94,3+0.485) -- (1.94,2+0.485) -- (0,2);
    \draw  (1.94,0.485) --  (2*1.94,2*0.485) --  (2*1.94,2+2*0.485);
    \draw  (1.94,0.485+0.1) --  (2*1.94,2*0.485+0.1);
     \draw  (1.94,0.485+0.2) --  (2*1.94,2*0.485+0.2);
      \draw  (1.94,0.485+0.3) --  (2*1.94,2*0.485+0.3);
       \draw  (1.94,0.485+0.4) --  (2*1.94,2*0.485+0.4);
           \draw  (1.94,0.485+0.5) --  (2*1.94,2*0.485+0.5);
     \draw  (1.94,0.485+0.6) --  (2*1.94,2*0.485+0.6);
      \draw  (1.94,0.485+0.7) --  (2*1.94,2*0.485+0.7);
       \draw  (1.94,0.485+0.8) --  (2*1.94,2*0.485+0.8);
              \draw  (1.94,0.485+0.9) --  (2*1.94,2*0.485+0.9);

        \draw  (1.94,0.485+0.1+0.9) --  (2*1.94,2*0.485+0.1+0.9);
     \draw  (1.94,0.485+0.2+0.9) --  (2*1.94,2*0.485+0.2+0.9);
      \draw  (1.94,0.485+0.3+0.9) --  (2*1.94,2*0.485+0.3+0.9);
       \draw  (1.94,0.485+0.4+0.9) --  (2*1.94,2*0.485+0.4+0.9);
           \draw  (1.94,0.485+0.5+0.9) --  (2*1.94,2*0.485+0.5+0.9);
     \draw  (1.94,0.485+0.6+0.9) --  (2*1.94,2*0.485+0.6+0.9);
      \draw  (1.94,0.485+0.7+0.9) --  (2*1.94,2*0.485+0.7+0.9);
       \draw  (1.94,0.485+0.8+0.9) --  (2*1.94,2*0.485+0.8+0.9);
              \draw  (1.94,0.485+0.9+0.9) --  (2*1.94,2*0.485+0.9+0.9);
              \draw  (1.94,0.485+1.9) --  (2*1.94,2*0.485+1.9);

    \end{scope}
   
    \end{tikzpicture}
    \caption{Illustration for Step 2 of the construction of $R$ in the proof of Proposition \ref{thm:eventoeven}. The notation $\omega(R')$ refers to the pattern obtained from $R$ by applying $\omega$ row by row.}
    \label{fig:thm.eventoeven.2}
\end{figure}
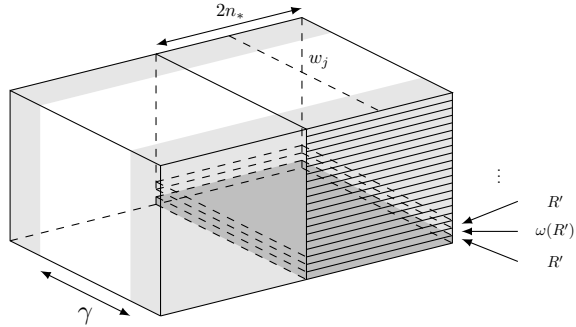

Let us define {in two steps} a rectangular pattern $R$ on alphabet $G^n$ with support $\llbracket 0, \nu\rrbracket \times \llbracket 0 ,n_{+}\rrbracket$, where $\nu\coloneqq 4k_{+}+l(\gamma)$.

\begin{enumerate}[label=Step~\arabic*.]
\item For all $i \in \llbracket  0, n \rrbracket$, the restriction $R|_{\llbracket 0 , \nu \rrbracket \times \{i\}}$ is defined as having the following dual walk: 
\[c'_i , \ldots , c'_{n-1}, d, c, d, c, \ldots\qquad \textrm{where }{c\coloneqq c'_{n-1} \textrm{ and } d\coloneqq\omega(c'_{n-1})}.\]
This step is illustrated in Figure \ref{fig:thm.eventoeven}.

\item 
For all $j \in \llbracket n+1 , n_{+}\rrbracket$, we define the slice $R_{\llbracket 0 , \nu \rrbracket \times \{j\}}$ as having the following dual walk: 
\[w_j, \omega(w_j) , w_j , \omega(w_j), \ldots \qquad 
\text{where }w_j \coloneqq R'_{\llbracket 0 , \nu \rrbracket \times \{j-n\}}.\]
This step is illustrated in Figure \ref{fig:thm.eventoeven.2}.
\end{enumerate}
Note that $R_{\llbracket 0, \nu \rrbracket \times\{0\}} = s^{k_+}\odot \gamma\odot s^{k_+}$. 
Furthermore, $R'_{\llbracket 0,\nu\rrbracket\times \{2n_{*}\}}$ is two-periodic and $R'_{\{0\}\times \llbracket 0 , 2n_{*}\rrbracket} = R'_{\{\nu\}\times \llbracket 0 , 2n_{*}\rrbracket}$. 
Thus, $R_{\llbracket 0,\nu\rrbracket\times \{n_{+}\}}$ is two-periodic and $R_{\{0\}\times \llbracket0,n_+\rrbracket}= R_{\{\nu\}\times \llbracket0,n_+\rrbracket}$. By Lemma \ref{lemma.border.sqdec} we have that $\varphi(\partial R)\in \Delta(G^n)[\gamma_0]$. 
Since $\varphi(R_l\odot R_u\odot R_r)$ is trivial, 
$\varphi(s^{k_+} \odot \gamma \odot s^{k_+}) = \varphi(\gamma) \in \Delta(G^n)[\gamma_0]$.
This completes the proof.
\end{proof}

\begin{proposition}\label{lemma.induction.d-1.to.d}
    Consider a mixing $d-$dimensional subshift $X$. If $\rho_n(X)$ is cohomologically trivial for all $n \ge 1$, then so is $X$.
\end{proposition}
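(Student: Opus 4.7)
The plan is to reduce a given continuous cocycle $c\colon \mathbb{Z}^d \times X \to \mathbb{G}$ to a group homomorphism in two stages. First, I trivialize $c$ along the subgroup $\mathbb{Z}^{d-1}\times \{0\}$ by pushing down to a projective $n$-strip and applying the hypothesis; then I use mixing to handle the remaining direction $\boldsymbol{e}^d$.

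Since $c$ is continuous, the maps $c(\boldsymbol{e}^1,\cdot),\ldots,c(\boldsymbol{e}^{d-1},\cdot)$ are each determined by values of $x$ in a finite window, so for $n$ sufficiently large (possibly up to a fixed shift in the $\boldsymbol{e}^d$-direction) the restriction of $c$ to $\mathbb{Z}^{d-1}\times X$ descends through the projection $\pi_n\colon X\to \rho_n(X)$ to a continuous cocycle $\tilde c\colon \mathbb{Z}^{d-1}\times \rho_n(X)\to \mathbb{G}$ satisfying $c(\boldsymbol{m},x)=\tilde c(\boldsymbol{m},\pi_n(x))$ for every $\boldsymbol{m}\in\mathbb{Z}^{d-1}$; this uses that $\pi_n$ intertwines the $\mathbb{Z}^{d-1}$-shifts. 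By hypothesis $\rho_n(X)$ is cohomologically trivial, hence $\tilde c$ is continuously cohomologous to a group homomorphism $\theta\colon\mathbb{Z}^{d-1}\to\mathbb{G}$ via some continuous transfer function $b\colon\rho_n(X)\to\mathbb{G}$. Using $B\coloneqq b\circ\pi_n$ as a transfer function on $X$, I replace $c$ by a cohomologous cocycle $c'$ satisfying $c'(\boldsymbol{m},x)=\theta(\boldsymbol{m})$ for every $\boldsymbol{m}\in\mathbb{Z}^{d-1}$ and every $x\in X$.

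It remains to show that $f\coloneqq c'(\boldsymbol{e}^d,\cdot)$ is constant. Writing the cocycle equation for $\boldsymbol{m}+\boldsymbol{e}^d$ two ways (with $\boldsymbol{m}\in\mathbb{Z}^{d-1}$) yields the $\theta$-equivariance
\[f(\sigma^{\boldsymbol{m}}x) = \theta(\boldsymbol{m})\,f(x)\,\theta(\boldsymbol{m})^{-1}.\]
Pick a finite box $B'$ on which $f$ is determined by continuity, and fix $x\in X$. By topological mixing of $X$ applied to the cylinder $U=\{z\in X:z|_{B'}=x|_{B'}\}$, for every $\boldsymbol{m}\in\mathbb{Z}^{d-1}$ of sufficiently large norm there exists $y\in X$ with both $y\in U$ and $\sigma^{\boldsymbol{m}}y\in U$; then $f(x)=f(y)=f(\sigma^{\boldsymbol{m}}y)=\theta(\boldsymbol{m})f(x)\theta(\boldsymbol{m})^{-1}$, so $\theta(\boldsymbol{m})$ commutes with $f(x)$. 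Since every element of $\mathbb{Z}^{d-1}$ is the difference of two vectors of arbitrarily large norm and $\theta$ is a homomorphism, $\theta(\mathbb{Z}^{d-1})$ commutes with $f(x)$ for every $x$. Given any other $x'\in X$, a second application of mixing to the cylinders around $x|_{B'}$ and $x'|_{B'}$ produces $\boldsymbol{m}\in\mathbb{Z}^{d-1}$ and $y\in X$ with $f(y)=f(x)$ and $f(\sigma^{\boldsymbol{m}}y)=f(x')$, whence $f(x')=\theta(\boldsymbol{m})f(x)\theta(\boldsymbol{m})^{-1}=f(x)$. Thus $f\equiv a$ is constant; equivariance forces $a$ to commute with $\theta(\mathbb{Z}^{d-1})$, so $c'$ defines a genuine group homomorphism $\mathbb{Z}^d\to\mathbb{G}$, and $c$ is trivial.

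The main obstacle is the second stage. The cocycle equation alone only gives the $\theta$-equivariance of $f$, which a priori only confines $f$ to a single conjugacy class in $\mathbb{G}$, not to a single value. The decisive point is that placing both endpoints of the mixing pair inside the same cylinder upgrades equivariance to outright commutation with $\theta(\mathbb{Z}^{d-1})$; only then does a second application of mixing with distinct endpoints close the gap between any two configurations $x,x'\in X$. The first stage is mostly routine bookkeeping once one checks that $\pi_n$ intertwines the $\mathbb{Z}^{d-1}$-shifts and that $\tilde c$ is well defined on $\rho_n(X)$, both of which follow from the cocycle equation.
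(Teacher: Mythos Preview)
Your proof is correct and follows essentially the same approach as the paper: project the cocycle to a strip $\rho_n(X)$, use cohomological triviality there to obtain a cohomologous cocycle $c'$ that is a homomorphism $\theta$ on $\mathbb{Z}^{d-1}\times\{0\}$, and then exploit topological mixing in the $\mathbb{Z}^{d-1}$-directions together with the conjugation relation $f(\sigma^{\boldsymbol{m}}x)=\theta(\boldsymbol{m})f(x)\theta(\boldsymbol{m})^{-1}$ to force $f=c'(\boldsymbol{e}^d,\cdot)$ to be constant. The only organizational difference is that you isolate the commutation step (showing $\theta(\mathbb{Z}^{d-1})$ centralizes each $f(x)$) before comparing two distinct configurations, whereas the paper derives the relation $c'(t\boldsymbol{e}_d,x)=\theta(-r\boldsymbol{e}_1)c'(t\boldsymbol{e}_d,x')\theta(r\boldsymbol{e}_1)$ directly and then specializes to $x=x'$; the content is the same.
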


\begin{proof}
Let $c : \mathbb{Z}^d \times X \rightarrow \mathbb{G}$ be a continuous cocycle. There exists some $k$ such that $(c(\boldsymbol{e}_i,x))_{i \in \llbracket 1, d\rrbracket}$ is determined by the value of $x_{\llbracket - k , k \rrbracket^d}$.

\paragraph*{(i) The projected cocycle $\boldsymbol{\tilde{c}}$ on $\rho_{2n+1}(X)$ is trivial.} 
Fix any $n \geq k$.
We set $\tilde{c} : \mathbb{Z}^{d-1} \times \rho_{2n+1}(X) \rightarrow \mathbb{G}$ as follows: 
\[\tilde{c}(\boldsymbol{v},x_{\mathbb{Z}^{d-1} \times {\llbracket -n, n\rrbracket}}) = c((\boldsymbol{v}_0, \ldots , \boldsymbol{v}_{d-1},0),x)\]
for all $x \in X$ and $\boldsymbol{v} \in \mathbb{Z}^{d-1}$. The map $\tilde{c}$ is well-defined because the values $c(\boldsymbol{e}_i,x)$ are determined by the value  of $x_{\llbracket - k , k \rrbracket^d}$. Note that if $X\subset A^{\Z^d}$ then formally $\rho_{2n+1}(X)$ has alphabet $A^{\llbracket 0, 2n+1 \llbracket}$. However with an abuse of notation we will center it at $0$ instead and think of it as having alphabet $A^{\llbracket -n, n \rrbracket}$.

Furthermore, it is easy to check that $\tilde{c}$ is a cocycle. Since $\rho_{2n+1}(X)$ is cohomologically trivial, there exists a continuous map $\tilde{b} : \rho_{2n+1}(X) \rightarrow \mathbb{G}$  and a {homomorphism} $\theta : \mathbb{Z}^{d-1} \rightarrow \mathbb{G}$  such that for all $x \in \rho_{2n+1}(X)$ and $\boldsymbol{v} \in \mathbb{Z}^{d-1}$, 
\[\tilde{c}(\boldsymbol{v},x) = \tilde{b}(\sigma^{\boldsymbol{v}}(x))^{-1} \theta(\boldsymbol{v})\tilde{b}(x).\]

\paragraph*{(ii) A cohomologous cocycle $\boldsymbol{c'}$ on $X$.}
Let us denote by $b:X \rightarrow \mathbb{G}$ such that for all $x \in X$:
\[{b(x) = \tilde{b}(x_{\mathbb{Z}^{d-1} \times {\llbracket -n, n\rrbracket}})}.\]
We denote by $c'$ the cocycle on $X$ given by:
\[c'(\boldsymbol{v},x) = b(\sigma^{\boldsymbol{v}}(x)) c(\boldsymbol{v},x) b(x)^{-1},\]
which is by definition cohomologous to $c$. 

\paragraph*{(iii) The cocycle $c'$ is an homomorphism.}

When $\boldsymbol{v} \in \mathbb{Z}^{d-1} \times \{0\}$, we have $c'(\boldsymbol{v},x) = \theta(\boldsymbol{v})$; this is a slight abuse of notation as $\theta$ is defined on $\Z^{d-1}$, that is, we ignore the last coordinate of $\boldsymbol{v}$.

We are left to prove that for all $t \in \mathbb{Z}$ and $x,x'\in X$, $c'(t\boldsymbol{e}_d,x) = c'(t\boldsymbol{e}_d,x')$. 
In order to see that, let us prove first that for all $x,x' \in X$ there exists $r$ such that: 
\[{c'(t\boldsymbol{e}_d,x) = \theta(-r\boldsymbol{e}_1)\cdot c'(t\boldsymbol{e}_d,x')\cdot \theta(r\boldsymbol{e}_1).}\]

Fix arbitrary configurations $x, x'\in X$. Since $X$ is mixing, there exists $r^{*} > 2n$ such that for all $r \ge r^{*}$, there exists $z \in X$ such that $z$ coincides with $x$ on $\llbracket - n , n \rrbracket ^{d-1} \times \llbracket - (n+t), (n+t) \rrbracket$ and with $x'$ on $r\boldsymbol{e}_1 + (\llbracket - n , n \rrbracket ^{d-1} \times \llbracket - (n+t), (n+t) \rrbracket)$. The cocycle equation implies: 

\[ c'(t\boldsymbol{e}_d,\sigma^{r\boldsymbol{e}_1}(z)) \cdot c'(r\boldsymbol{e}_1,z) = c'(r\boldsymbol{e}_1,\sigma^{t\boldsymbol{e}_d}(z)) \cdot c'(t\boldsymbol{e}_d,z).\]
 By Remark \ref{remark.nullvalue.cocycle}, we have that $c'(r\boldsymbol{e}_1,\sigma^{t\boldsymbol{e}_d}(z)) = c'(-r\boldsymbol{e}_1,\sigma^{t\boldsymbol{e}_d + r \boldsymbol{e}_1}(z))^{-1}$ and $c'(t\boldsymbol{e}_d,z) = c'(-t\boldsymbol{e}_d,\sigma^{t\boldsymbol{e}_d}(z))^{-1}$, and we rewrite the equation above as:
\[c'(-t\boldsymbol{e}_d,\sigma^{t\boldsymbol{e}_d}(z)) \cdot c'(-r\boldsymbol{e}_1,\sigma^{t\boldsymbol{e}_d + r \boldsymbol{e}_1}(z)) \cdot c'(t\boldsymbol{e}_d,\sigma^{r\boldsymbol{e}_1}(z)) \cdot c'(r\boldsymbol{e}_1,z) = 1_{\mathbb{G}}.\]
This can be rewritten as:
\[c'(-t\boldsymbol{e}_d,\sigma^{t\boldsymbol{e}_d}(z)) \cdot \theta(-r\boldsymbol{e}_1) \cdot c'(t\boldsymbol{e}_d,\sigma^{r\boldsymbol{e}_1}(z)) \cdot \theta(r\boldsymbol{e}_1) = 1_{\mathbb{G}}.\]
Therefore , since $n\geq k$, by continuity of $c'$: 
\[c'(-t\boldsymbol{e}_d,\sigma^{t\boldsymbol{e_d}}(x)) \cdot \theta(-r\boldsymbol{e}_1) \cdot c'(t\boldsymbol{e}_d,x') \cdot \theta(r\boldsymbol{e}_1) = 1_{\mathbb{G}}.\]
By the cocycle equation we have $c'(-t\boldsymbol{e}_d,\sigma^{t\boldsymbol{e_d}}(x)) = (c'(t\boldsymbol{e}_d,x))^{-1}$ and thus:
\[{c'(t\boldsymbol{e}_d,x) = \theta(-r\boldsymbol{e}_1)\cdot c'(t\boldsymbol{e}_d,x')\cdot \theta(r\boldsymbol{e}_1).}\]
 For every $x,x'$, we combine the equation above with the same one where $x$ is replaced with $x'$, in order to obtain the equality of the left-hand sides: 
\[c'(t\boldsymbol{e}_d,x) = c'(t\boldsymbol{e}_d,x').\]
This means precisely that $c'(t\boldsymbol{e}_d,\cdot)$ is constant on $X$.
\end{proof}

\begin{proof}[\textbf{Proof of Theorem} \ref{theorem.higherd.homshift.trivial}]
    It is sufficient to prove that, if $X^2_G$ is cohomologically trivial and mixing, then $X^d_G$ is cohomologically trivial for all $d > 2$. Let us prove the base case $(\mathcal{P}_3)$.
    Assume that $X^2_G$ is cohomologically trivial. By Proposition~\ref{proposition.non.trivial.sqgrp.cocycle}, all even length cycles on $G$ are square-decomposable. By Proposition~\ref{thm:eventoeven}, this is also the case for all even length cycles of $G^n$ for all $n \ge 1$. Since $X^2_G$ is mixing $X^2_{G^n}$ is also mixing for all $n \ge 1$. By Proposition~\ref{theorem.finite.case}, we thus have that $X^2_{G^n}$ is cohomologically trivial for all $n \ge 1$. By Proposition~\ref{lemma.induction.d-1.to.d}, $X^3_{G}$ is cohomologically trivial. The induction step is as follows. Assume $(\mathcal{P}_d)$ for some $d > 2$. For any graph $G$ such that $X^2_G$ is cohomologically trivial, apply $(\mathcal{P}_d)$ to $G^n$ for all $n \ge 1$ to obtain that $\rho_n(X^{d+1}_G)$ is cohomologically trivial for all $n \ge 1$. By Proposition~\ref{lemma.induction.d-1.to.d}, $X^{d+1}_G$ is cohomologically trivial.
\end{proof}

\section{\label{section.boxext}Box-extension property}

The box-extension property, defined below, is known to imply that every continuous cocycle is trivial~\cite{Schmidt95}. Homshifts provide many examples that show that the converse is not true. We provide one such example in this section.
 
\begin{definition}
A two-dimensional subshift $X$ is said to have the \textbf{box-extension property} when there exists $r \ge 1$ such that 
for all $n \ge 0$, every locally admissible pattern on $\boldsymbol{B}^2(n+r+1) \backslash \boldsymbol{B}^2(n+r)$ which can be extended into a locally admissible pattern on $\boldsymbol{B}^2(n+r+1) \backslash \boldsymbol{B}^2(n)$ can be extended into a configuration of $X$.
\end{definition}

\begin{remark}
When $X$ is a homshift, {since every locally admissible pattern on a square can be extended into a configuration of $X$ (\cite[Proposition 2.1]{MR3743365}),} this is equivalent to say that every locally admissible pattern on $\boldsymbol{B}^2(n+r+1) \backslash \boldsymbol{B}^2(n+r)$ which can be extended into a locally admissible pattern on $\boldsymbol{B}^2(n+r+1) \backslash \boldsymbol{B}^2(n)$ can be extended into a locally admissible pattern on $\boldsymbol{B}^2(n+r+1)$.
\end{remark}

We prove in Section~\ref{section.boxextsqdec} that if $X_G^2$ has the box-extension property then $G$ is square-decomposable. Then we prove in Section~\ref{section.nonboxext} that the homshift associated with the Kenkatabami graph does not have the box-extension property. Since this graph has trivial even square group, this implies that the box-extension property is strictly stronger than cohomological triviality. 

\subsection{\label{section.boxextsqdec}Box-extension implies trivial even square group}
The box-extension property implies trivial cohomology and we know by Theorem~\ref{theorem: cohomological triviality} that a homshift has trivial cohomology if and only if its even square group is trivial. Let us see a more combinatorial argument to show that the box extension property implies that the even square group is trivial.

\begin{proposition}\label{proposition.completion.border}
    For all {cycles $c,c',c'',c'''$ cycles of length $2n$ which all begin and end at the same vertex $a$ of $G$}, and for every $k \ge 0$, there exists a pattern on $\textbf{B}^2(n(2k+1)) \backslash \textbf{B}^2(n-1)$
    whose restriction $p$ on $\partial \textbf{B}^2(n)$, is {defined by 
    $p_l = c, p_u = c', p_r = c'', p_d = c'''$ (following Notation \ref{notation.clockwise}),}
    and whose restriction $q$ to $\partial \textbf{B}^2(n(2k+1))$ is {defined by 
    $q_l = c^{-k} \odot c^{k+1}$, 
    $q_u = {(c')}^{k+1} \odot {(c')}^{-k}$, 
    $q_r = {(c'')}^{-k} \odot {(c'')}^{k+1}$ and $q_d = {(c''')}^{k+1} \odot {(c''')}^{-k}$,}
    {as on the left hand side of Figure \ref{figure.completion.border}.}
\end{proposition}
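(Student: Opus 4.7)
The plan is to construct the pattern explicitly. Decompose the annulus $\boldsymbol{B}^2(n(2k+1))\setminus \boldsymbol{B}^2(n-1)$ into four side strips (each a $(2n+1)\times(2nk+1)$ rectangle abutting an inner side) and four corner squares of size $(2nk+1)\times(2nk+1)$ abutting an inner corner, and place a simple ``diagonal'' homomorphism into $G$ on each region. On the top strip $\{|x|\le n,\ n\le y\le n(2k+1)\}$ set $v(x,y) = c'_{(x+y)\bmod 2n}$; then $v(x,n)=c'_{x+n}$ recovers $p_u = c'$ and, because $n(2k+1)\equiv n \pmod{2n}$, $v(x,n(2k+1))$ also equals $c'_{x+n}$, which is exactly the middle $2n+1$ positions of the outer top walk $q_u$. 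Fill the other three strips by the analogous formulas using $c, c'', c'''$ and appropriate signs, namely $v=c_{(x+y)\bmod 2n}$ on the left strip, $v=c''_{-(x+y)\bmod 2n}$ on the right strip, and $v=c'''_{-(x+y)\bmod 2n}$ on the bottom strip, chosen so that each inner side of $p$ is recovered with the correct orientation.

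For each corner square, split along the diagonal joining the inner corner to the outer corner. For the top-right corner, with local coordinates $X=x-n,\ Y=y-n\in [0,2nk]$, set $v = c'_{(Y-X)\bmod 2n}$ on the triangle $Y\ge X$ (adjacent to the top edge) and $v = c''_{(Y-X)\bmod 2n}$ on the triangle $Y\le X$ (adjacent to the right edge). Both formulas evaluate to $a$ on the diagonal since $c'_0 = c''_0 = a$. On the outer top edge ($Y=2nk$) the formula yields $c'_{-X\bmod 2n}$, which is exactly the piece $(c')^{-k}$ of $q_u$ between $x=n$ and $x=n(2k+1)$; on the outer right edge ($X=2nk$) it yields $c''_{Y\bmod 2n}$, the piece of $q_r$ above the right strip. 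Handle the other three corners symmetrically using the appropriate pair of adjacent cycles.

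Three verifications then complete the proof. Local admissibility inside each region is automatic, since every formula has the shape $c_\bullet((\alpha x+\beta y+\gamma)\bmod 2n)$ with $\alpha,\beta\in\{\pm 1\}$, so a unit $\Z^2$-step shifts the cycle index by $\pm 1$ and produces a $G$-neighbor. Compatibility at each strip/corner interface follows from the identity $n(2k+1)-n = 2nk \equiv 0 \pmod{2n}$, which absorbs the offset between the two local coordinate systems (for instance at $x=n$ between the top strip and the top-right corner); inside each corner the two triangle formulas match along the diagonal with common value $a$. Finally, reading the boundary walks recovers the prescribed $p$ on the inner frame (with all inner corners equal to $a$, consistent with all four cycles beginning and ending at $a$) and the prescribed $q$ on the outer frame: on the top side the contributions $(c')^k$ from the top-left corner's upper triangle, $c'$ from the top strip, and $(c')^{-k}$ from the top-right corner's upper triangle concatenate to $(c')^{k+1}\odot(c')^{-k} = q_u$, and the three other sides are handled identically.

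The principal technical obstacle is orientation bookkeeping: one must select the signs in $\pm x \pm y$ in each of the eight regions so that every inner side, every outer side, and every interface between neighboring regions is traversed with the correct cycle orientation. The two features that make everything close up are that the four cycles share the base point $a$, so each corner diagonal is consistently mapped to $a$, and that $n(2k+1)\equiv n \pmod{2n}$, which matches the inner and outer boundary readings along each strip.
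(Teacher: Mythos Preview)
Your construction is correct and follows the same overall strategy as the paper: decompose the annulus into four side strips and four corner squares, fill each with a ``diagonal'' homomorphism built from the appropriate cycle(s), and verify that the pieces glue along shared edges because the relevant index shifts are multiples of $2n$ and all four cycles share the base point $a$. The paper packages the corner (and strip) fillings into named building blocks $P_\lrcorner(\gamma,\gamma')$, $P_\urcorner(\gamma,\gamma')$ that are constant along one family of diagonals with $\gamma$ on one triangle and $\gamma'$ on the other; your explicit modular formulas $c_\bullet((\alpha x+\beta y)\bmod 2n)$ are a direct coordinate realisation of the same idea. One inessential difference: on the strips the paper tiles with copies of $P_\lrcorner(c,c^{-1})$, which ``reflect'' at the anti-diagonal of each $2n\times 2n$ block, whereas your single formula $c_{(x+y)\bmod 2n}$ ``wraps around'' modulo $2n$; the two fillings differ in the interior of the strip but agree on every interface and on both boundary frames, so either works.
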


\begin{proof}
    Provided an integer $m$ and two cycles $\gamma,\gamma'$ of length $m$ which begin and end on the same vertex, let $P_\lrcorner(\gamma,\gamma')$ be the pattern on $\llbracket 0 , m \rrbracket ^2$ such that :

    \begin{itemize}
        \item { $\left(P_\lrcorner(\gamma,\gamma')\right)_l = \gamma$ and $\left(P_\lrcorner(\gamma,\gamma')\right)_d = \gamma^{-1}$;} 
        \item { $\left(P_\lrcorner(\gamma,\gamma')\right)_u = \gamma'$ and $\left(P_\lrcorner(\gamma,\gamma')\right)_r = (\gamma')^{-1}$;} 
        \item for all $\textbf{j} \in \llbracket 1, m\rrbracket \times \llbracket 1, m \rrbracket$, $P_\lrcorner(\gamma,\gamma')_{\textbf{j}} = P_\lrcorner(\gamma,\gamma')_{\textbf{j} - (1,1)}$.
    \end{itemize} 

The pattern $P_\urcorner(\gamma,\gamma')$ is obtained from $P_\lrcorner(\gamma,\gamma')$ by rotation of angle $-\pi/2$. {These two notations are illustrated on Figure \ref{figure.p_corner}.}

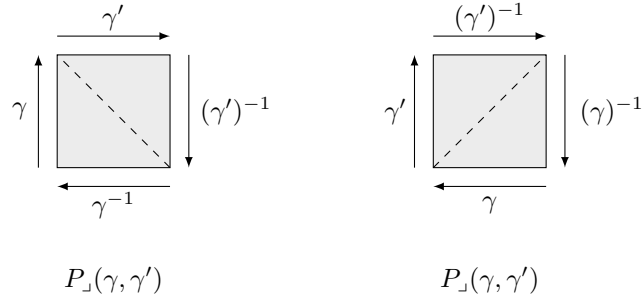
\begin{figure}[!ht]
    \centering
    \begin{tikzpicture}[scale=0.5]
        \draw[fill=gray!15] (0,0) rectangle (3,3);
        \draw[-latex] (-0.5,0) -- (-0.5,3);
        \draw[latex-] (3.5,0) -- (3.5,3);
        \draw[-latex] (0,3.5) -- (3,3.5);
        \draw[latex-] (0,-0.5) -- (3,-0.5);
        \draw[dashed] (3,0) -- (0,3);
        \node at (1.5,-1) {$\gamma^{-1}$};
        \node at (1.5,4) {$\gamma'$};
        \node at (-1,1.5) {$\gamma$};
        \node at (4.75,1.5) {$(\gamma')^{-1}$};
        \node at (1.5,-3) {$P_\lrcorner(\gamma,\gamma')$};
        \begin{scope}[xshift=10cm]
             \draw[fill=gray!15] (0,0) rectangle (3,3);
             \draw[-latex] (-0.5,0) -- (-0.5,3);
        \draw[latex-] (3.5,0) -- (3.5,3);
        \draw[-latex] (0,3.5) -- (3,3.5);
        \draw[latex-] (0,-0.5) -- (3,-0.5);
        \draw[dashed] (0,0) -- (3,3);
        \node at (1.5,-1) {$\gamma$};
        \node at (1.5,4) {$(\gamma')^{-1}$};
        \node at (-1,1.5) {$\gamma'$};
        \node at (4.75,1.5) {$(\gamma)^{-1}$};
        \node at (1.5,-3) {$P_\lrcorner(\gamma,\gamma')$};
        \end{scope}
    \end{tikzpicture}
    \caption{Illustration for the notations $P_\lrcorner(\gamma,\gamma')$ and $P_\urcorner(\gamma,\gamma')$. The dashed lines indicate along which directions the symbols are identified.}
    \label{figure.p_corner}
\end{figure}

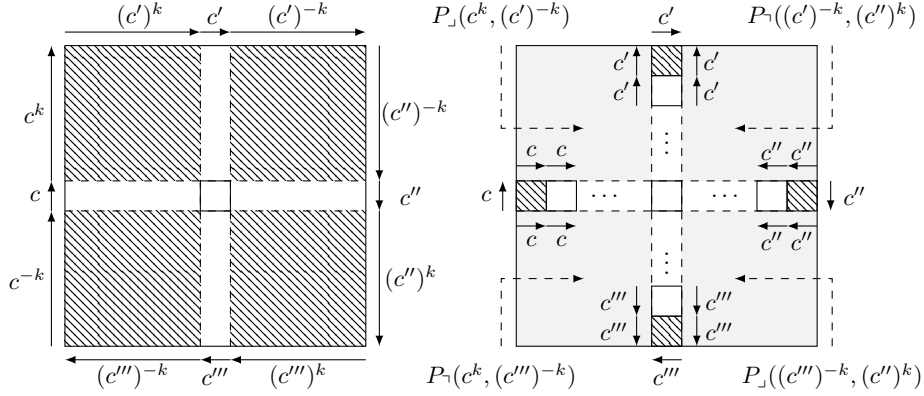
\begin{figure}[!ht]
    \centering
    \begin{tikzpicture}[scale=0.4]
\draw (0,0) rectangle (10,10);
\draw (4.5,4.5) rectangle (5.5,5.5);
            \draw[dashed] (0,4.5) -- (10,4.5);
            \draw[dashed] (4.5,0) -- (4.5,10);
            \draw[dashed] (0,5.5) -- (10,5.5);
            \draw[dashed] (5.5,0) -- (5.5,10);
            \fill[pattern=north west lines] (0,0) rectangle (4.5,4.5);
            \fill[pattern=north west lines] (5.5,5.5) rectangle (10,10);
            \fill[pattern=north west lines] (0,10) rectangle (4.5,5.5);
            \fill[pattern=north west lines] (10,0) rectangle (5.5,4.5);
            \node[scale=0.9] at (2.25,-1) {$(c''')^{-k}$};
            \node[scale=0.9] at (5,-1) {$c'''$};
            \node[scale=0.9] at (7.75,-1) {$(c''')^k$};
            \draw[-latex] (4.5,-0.45) -- (0,-0.45);
            \draw[-latex] (5.5,-0.45) -- (4.5,-0.45);
            \draw[-latex] (10,-0.45) -- (5.5,-0.45);
            \draw[-latex] (10.45,4.5) -- (10.45,0);
            \node[scale=0.9] at (11.5,2.25) {$(c'')^k$};
            \draw[-latex] (10.45,5.5) -- (10.45,4.5);
            \draw[-latex] (10.45,10) -- (10.45,5.5);
            \node[scale=0.9] at (11.5,5) {$c''$};
            \node[scale=0.9] at (11.625,7.75) {$(c'')^{-k}$};
            \draw[latex-] (10,10.45) -- (5.5,10.45);
            \draw[latex-] (5.5,10.45) -- (4.5,10.45);
            \draw[latex-] (4.5,10.45) -- (0,10.45);
            \draw[-latex] (-0.45,0) -- (-0.45,4.5);
            \draw[-latex] (-0.45,4.5) -- (-0.45,5.5);
            \draw[-latex] (-0.45,5.5) -- (-0.45,10);
            \node[scale=0.9] at (7.75,11) {$(c')^{-k}$};
            \node[scale=0.9] at (5,11) {$c'$};
            \node[scale=0.9] at (2.5,11) {$(c')^k$};
            \node[scale=0.9] at (-1,7.75) {$c^k$};
            \node[scale=0.9] at (-1,5) {$c$};
            \node[scale=0.9] at (-1.25,2.25) {$c^{-k}$};
        \begin{scope}[xshift=15cm]
        \fill[gray!10] (0,0) rectangle (4.5,4.5);
        \fill[gray!10] (10,10) rectangle (5.5,5.5);
        \fill[gray!10] (10,0) rectangle (5.5,4.5);
        \fill[gray!10] (0,10) rectangle (4.5,5.5);
        \draw (0,0) rectangle (10,10);
            \draw (4.5,4.5) rectangle (5.5,5.5);
            \draw[dashed] (0,4.5) -- (10,4.5);
            \draw[dashed] (4.5,0) -- (4.5,10);
            \draw[dashed] (0,5.5) -- (10,5.5);
            \fill[pattern = north west lines] (0,4.5) rectangle (1,5.5);
            \fill[pattern = north west lines] (4.5,0) rectangle (5.5,1);
            \fill[pattern = north west lines] (4.5,9) rectangle (5.5,10);
            \fill[pattern = north west lines] (9,4.5) rectangle (10,5.5);
            \draw (0,4.5) rectangle (1,5.5);
            \draw (1,4.5) rectangle (2,5.5);
            \node[scale=0.9] at (3,5) {$\dots$};
            \node[scale=0.9] at (7,5) {$\dots$};
            \node[scale=0.9] at (5,3) {$\vdots$};
            \node[scale=0.9] at (5,7) {$\vdots$};
            \draw[dashed] (5.5,0) -- (5.5,10);
            \draw (4.5,9) rectangle (5.5,10);
            \draw (4.5,8) rectangle (5.5,9);
            \draw (9,4.5) rectangle (10,5.5);
            \draw (8,4.5) rectangle (9,5.5);
            \draw (4.5,0) rectangle (5.5,1);
            \draw (4.5,1) rectangle (5.5,2);
            \draw[latex-] (1,4) -- (0,4);
            \draw[latex-] (2,4) -- (1,4);
            \draw[-latex] (-0.45,4.5) -- (-0.45,5.5);
            \draw[-latex] (0,6) -- (1,6);
            \draw[-latex] (1,6) -- (2,6);
            \draw[-latex] (4,9) -- (4,10);
            \draw[-latex] (4,8) -- (4,9);
            \draw[-latex] (4.5,10.45) -- (5.5,10.45);
            \draw[latex-] (6,10) -- (6,9);
            \draw[latex-] (6,9) -- (6,8);
            \draw[latex-] (8,6) -- (9,6);
            \draw[latex-] (9,6) -- (10,6);
            \draw[-latex] (10.45,5.5) -- (10.45,4.5);
            \draw[-latex] (10,4) -- (9,4);
            \draw[-latex] (9,4) -- (8,4);
            \draw[-latex] (6,2) -- (6,1);
            \draw[latex-] (6,0) -- (6,1);
            \draw[-latex] (5.5,-0.45) -- (4.5,-0.45);
            \draw[latex-] (4,0) -- (4,1);
            \draw[latex-] (4,1) -- (4,2);
            \node[scale=0.9] at (-0.5,-1) {$P_\urcorner(c^k,(c''')^{-k})$};
            \draw[dashed,-latex] (-0.5,0) -- (-0.5,2.25) -- (2.25,2.25);
            \node[scale=0.9] at (-0.5,11) {$P_\lrcorner(c^k, (c')^{-k})$};
            \draw[dashed,-latex] (-0.5,10) -- (-0.5,7.25) -- (2.25,7.25);
            \node[scale=0.9] at (10.5,11) {$P_\urcorner((c')^{-k},(c'')^{k})$};
            \draw[dashed,-latex] (10.5,10) -- (10.5,7.25) -- (7.25,7.25);
            \node[scale=0.9] at (10.5,-1) {$P_\lrcorner((c''')^{-k},(c'')^{k})$};
            \draw[dashed,-latex] (10.5,0) -- (10.5,2.25) -- (7.25,2.25);
            \node[scale=0.9] at (-1,5) {$c$};
            \node[scale=0.9] at (0.5,6.5) {$c$};
            \node[scale=0.9] at (1.5,6.5) {$c$};
            \node[scale=0.9] at (0.5,3.5) {$c$};
            \node[scale=0.9] at (1.5,3.5) {$c$};
            \node[scale=0.9] at (3.5,8.5) {$c'$};
            \node[scale=0.9] at (3.5,9.5) {$c'$};
            \node[scale=0.9] at (5,11) {$c'$};
            \node[scale=0.9] at (6.5,8.5) {$c'$};
            \node[scale=0.9] at (6.5,9.5) {$c'$};
            \node[scale=0.9] at (8.5,6.5) {$c''$};
            \node[scale=0.9] at (9.5,6.5) {$c''$};
            \node[scale=0.9] at (8.5,3.5) {$c''$};
            \node[scale=0.9] at (9.5,3.5) {$c''$};
            \node[scale=0.9] at (11.25,5) {$c''$};

            \node[scale=0.9] at (5,-1) {$c'''$};
            \node[scale=0.9] at (6.75,0.5) {$c'''$};
            \node[scale=0.9] at (6.75,1.5) {$c'''$};
            \node[scale=0.9] at (3.25,1.5) {$c'''$};
            \node[scale=0.9] at (3.25,0.5) {$c'''$};
       \end{scope}
\end{tikzpicture}
    \caption{\label{figure.completion.border}Illustration of the proof of Proposition~\ref{proposition.completion.border}.
        }
\end{figure}
We define a pattern on $\textbf{B}^2(n(2k+1)) \backslash \textbf{B}^2(n-1)$ as illustrated on Figure~\ref{figure.completion.border}. Formally, its restriction on $\llbracket - n(2k+1) , -n\rrbracket \times \llbracket - n(2k+1) , -n\rrbracket$ (resp. $\llbracket - n(2k+1) , -n\rrbracket \times \llbracket n , n(2k+1)\rrbracket$, $\llbracket n , n(2k+1) \rrbracket \times \llbracket n , n(2k+1)\rrbracket$, $\llbracket n , n(2k+1) \rrbracket \times \llbracket -n(2k+1) , -n\rrbracket$ ) to be $P_\urcorner(c^k,(c''')^{-k})$ (resp. $P_\lrcorner(c^{k},(c')^{-k})$, $P_\urcorner((c')^{-k},(c'')^{k})$, $P_\lrcorner((c''')^{-k},(c'')^k)$). The remainder is filled as follows: on each of the squares $\boldsymbol{B}^2(n) + 2nk' \textbf{e}$, where $\textbf{e} = (-1,0)$ (resp. $(0,1), (1,0), (0,-1)$), the restriction is $P_\lrcorner(c,c^{-1})$ (resp. $P_\lrcorner(c',(c')^{-1})$, $P_\lrcorner((c'')^{-1},c'')$, $P_\lrcorner((c''')^{-1},c''')$) for all $k' \in \llbracket 1, k \rrbracket$. This pattern satisfies the conditions of the statement.
\end{proof}

\begin{theorem}\label{theorem.boxext}
For all graph $G$, if the two-dimensional homshift $X_G^2$ has the box-extension property, then the even square group of $G$ is trivial.
\end{theorem}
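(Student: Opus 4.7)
The plan is to show that every non-backtracking even-length cycle $\gamma$ in $G$ based at a vertex $a$ lies in $\Delta(G)[a]$; by definition of the even square group, this is exactly the triviality of $\mathcal{E}_G^\square$. So let $\gamma$ have length $2n$, fix any neighbor $a'$ of $a$ (which exists because $G$ is connected), and set $t = aa'a$, so that $t^n$ is a length-$2n$ cycle at $a$. I will construct a locally admissible pattern on a box $\boldsymbol{B}^2(M)$ whose outer boundary reads to a cycle $W$ such that $\varphi(W) = \gamma$; Lemma \ref{lemma.border.sqdec} will then immediately yield $\gamma \in \Delta(G)[a]$.

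To construct such a pattern I combine Proposition \ref{proposition.completion.border} with the box-extension hypothesis. Applying Proposition \ref{proposition.completion.border} with $c = \gamma$ and $c' = c'' = c''' = t^n$, for any $k \geq 0$, produces a locally admissible annular pattern $P_{\mathrm{ann}}$ on $\boldsymbol{B}^2(n(2k+1)) \setminus \boldsymbol{B}^2(n-1)$ whose outer boundary reads $\gamma^{-k} \odot \gamma^{k+1}$ on the left and, using $t^{-1} = t$, the single walk $t^{n(2k+1)}$ on each of the three other sides. Set $M = n(2k+1)$ and choose $k$ large enough so that $M \geq n + r$, where $r \geq 1$ is the universal constant from the box-extension property. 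Then $P_{\mathrm{ann}}$ restricts to a locally admissible pattern on $\boldsymbol{B}^2(M) \setminus \boldsymbol{B}^2(M-r-1)$, in particular extending its own thin outer ring $\boldsymbol{B}^2(M) \setminus \boldsymbol{B}^2(M-1)$ to this wider annulus. The box-extension property then produces a configuration of $X^2_G$ extending this thin outer ring; its restriction to $\boldsymbol{B}^2(M)$ is a locally admissible pattern $P$ with the same outer boundary cycle $W$ as $P_{\mathrm{ann}}$.

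By Lemma \ref{lemma.border.sqdec}, $\varphi(W) \in \Delta(G)[a]$, and it remains to compute $\varphi(W)$. Since $t=aa'a$ is itself a backtrack, iterated backtrack removal reduces $t^m$ to the trivial walk at $a$ for every $m\geq 1$. Consequently the three concatenated $t^{n(2k+1)}$-sides collapse entirely under $\varphi$, while $\gamma^{-k} \odot \gamma^{k+1}$ reduces to $\gamma$ through the $k$ successive cancellations of adjacent $\gamma^{-1}$--$\gamma$ pairs (mirroring the free-group reduction $\gamma^{-k}\gamma^{k+1} = \gamma$). Hence $\varphi(W) = \gamma$, which proves $\gamma \in \Delta(G)[a]$ and concludes the proof.

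The step I expect to require the most care is the last calculation: one must justify that the backtrack removals inside the $t$-block and those inside $\gamma^{-k} \odot \gamma^{k+1}$ can be carried out independently. This follows from the well-definedness of $\varphi$ (the final non-backtracking walk is independent of the order of reductions) combined with two observations: both endpoints of $t^{n(2k+1)}$ equal $a$, which is also the endpoint of the adjacent $\gamma^{k+1}$, so the internal reductions of $t^{n(2k+1)}$ shrink it to $[a]$ without ever pulling vertices out of the $\gamma$-portion; and $\gamma$ being non-backtracking guarantees that the reduction of $\gamma^{-k}\odot\gamma^{k+1}$ indeed terminates at $\gamma$ and not at some strict further simplification.
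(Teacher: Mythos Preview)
Your proof is correct and follows essentially the same approach as the paper: apply Proposition~\ref{proposition.completion.border} with $c=\gamma$ and $c'=c''=c'''=t^{n}$ to build a thick annulus, invoke box-extension to fill the hole, and then use Lemma~\ref{lemma.border.sqdec} on the outer boundary. Your write-up is in fact more explicit than the paper's in identifying that it is the \emph{outer} boundary cycle one must analyze and in carrying out the reduction $\varphi(W)=\gamma$, whereas the paper compresses this into a single sentence.
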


\begin{proof}
    Let us assume that $X^2_G$ has the box-extension property with parameter $r \ge 1$. Let us prove that every even cycle $c$ of $G$ is square-decomposable. Pick $t$ an arbitrary cycle of length two beginning at $c_0$, and some integer $l$ such that $l \cdot l(c) \ge r$. Apply Proposition~\ref{proposition.completion.border} on the cycles $c$ and $c' = c'' = c'''= t^{l(c)/2}$ {to get a pattern whose restriction to $\textbf{B}^2(n)$ is $c\odot t^{3l(c)/2}$. By the box-extension property, this pattern can be extended to a configuration, so by Lemma~\ref{lemma.border.sqdec},} the cycle $c$ is square-decomposable.
\end{proof}

\subsection{\label{section.nonboxext}Some counter-example for the converse implication}

It is immediate that the graph $K$ shown on Figure~\ref{figure.kenkatabami} is square-decomposable. In particular, its even square group is trivial. However, it does not have the box-extension property.

\begin{proposition}
\label{prop.no.boxext}
    Consider a graph $G$ which has a {non-backtracking} cycle $c$ of even length on $G$ such that for all $i$, $c_{i}$ is the only common neighbor of {$c_{(i-1)\bmod l(c)}$ and  $c_{(i+1)\bmod l(c)}$} in $G$. {The homshift $X^2_G$ does not have the box extension property. }
\end{proposition}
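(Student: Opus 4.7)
The plan is to show, for each fixed $r\geq 1$, that the box-extension property fails with parameter $r$. I would apply Proposition~\ref{proposition.completion.border} with the four cycles $c = c' = c'' = c'''$ all equal to the given cycle (or, to accommodate the divisibility requirement $l(c) \mid r+1$, using a common power $c^j$ as the four cycles). The parameters $n$ and $k$ of the proposition are then chosen so that the resulting annulus on $\boldsymbol{B}^2(N) \setminus \boldsymbol{B}^2(n)$ has width $N - n = r+1$. The construction produces a locally admissible annular pattern whose inner border on $\partial \boldsymbol{B}^2(n)$ reads $c^4$ and whose outer border $\hat\gamma$ on $\partial \boldsymbol{B}^2(N)$ satisfies $\varphi(\hat\gamma) = c^4$.

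Granted the key claim that $c^4$ is not square-decomposable, Lemma~\ref{lemma.border.sqdec} implies that $\hat\gamma$ does not extend to a locally admissible pattern on $\boldsymbol{B}^2(N)$; by Proposition~2.1 of \cite{MR3743365}, this is equivalent to saying $\hat\gamma$ cannot be extended to any configuration of $X^2_G$. On the other hand, by construction $\hat\gamma$ does extend inward to the annulus of width $r+1$. This exhibits a thin-border pattern violating box extension with parameter $r$, and since $r$ was arbitrary, the proof concludes.

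The main obstacle is establishing the key claim: under the uniqueness hypothesis, no nontrivial power of $c$ is square-decomposable. I would argue contrapositively via Lemma~\ref{lemma.box}: a hypothetical rectangular pattern $R$ for $c$ would have bottom row $t^{-k} \odot c^{-1} \odot t^{-k}$, top row $t^{\nu/2}$, and sides $t^{\pm n}$, where $t$ is a $2$-cycle on $c_0$. The side-column padding forces the first and last value in each interior row, and the uniqueness condition (that $c_i$ is the only common neighbor of $c_{i-1}$ and $c_{i+1}$) rigidly propagates the cycle $c$ upward: once two consecutive rows contain $c$ up to a shift, the next row above is forced to also contain a shift of $c$, because each interior value must be the unique common neighbor of two adjacent forced values. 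This rigidity prevents the pattern from ever transitioning to the alternating top row $t^{\nu/2}$, unless $l(c) = 2$, which is excluded by the non-backtracking hypothesis on $c$. The technical heart of the proof is carrying out this propagation argument carefully at the transition between the $c$-block and the $t$-padding on each row; an alternative and possibly cleaner approach would be to construct a group-valued invariant of walks that vanishes on squares (by uniqueness) but is nonzero on $c$, yielding a homomorphism $\pi_1^\square(G) \to H$ with $c$ nontrivial in the image.
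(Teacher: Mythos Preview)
Your proposal has a genuine gap: the ``key claim'' that $c^4$ (or any nontrivial power of $c$) is not square-decomposable is \emph{false} under the stated hypotheses. The very graph that motivates this proposition --- the Kenkatabami graph $K$ of Figure~\ref{figure.kenkatabami} --- is square-decomposable (as noted at the start of Section~\ref{section.nonboxext}), while its exterior cycle $c=\epsilon_1\gamma_1\epsilon_2\gamma_2\epsilon_3\gamma_3\epsilon_1$ satisfies the uniqueness hypothesis (this is exactly the content of the corollary following the proposition). Hence $c$, and a fortiori $c^4$, \emph{is} square-decomposable in $K$, so Lemma~\ref{lemma.border.sqdec} yields no obstruction to filling the box, and your argument collapses. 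Your alternative route --- a homomorphism $\pi_1^\square(G)\to H$ sending $c$ to something nontrivial --- fails for the same reason: for $K$ the square group is trivial.

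The error in your rigidity-propagation sketch lies precisely at the place you flagged as ``the technical heart'': at the transition between the $c$-block and the $t$-padding in a row, the two horizontal neighbors of a cell are no longer of the form $(c_{i-1},c_{i+1})$, so the uniqueness hypothesis does not apply and the cycle is free to ``escape'' into other vertices of $G$. This is exactly how $c$ gets decomposed into squares in $K$. The paper's proof avoids global square-decomposability entirely: it uses the uniqueness hypothesis to show that the inward extension through the annulus is \emph{forced} to be the pattern of Proposition~\ref{proposition.completion.border} (so the inner border must be $c^4$), and then checks directly that the border $c^4$ on $\partial\boldsymbol{B}^2(n)$ cannot be extended even one step further inward --- a purely local obstruction at the corners, independent of whether $c$ is square-decomposable.
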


\begin{proof}
    The stated property of $c$ implies that for all $l \ge 0$, there is a unique pattern on  $\textbf{B}^2(n(2l+1)) \backslash \textbf{B}^2(n-1)$
    whose restriction $p$ to $\partial \textbf{B}^2(n(2l+1))$ is {defined by $p_l = c^{-l} \odot c^{l+1}$, $p_u = {c}^{l+1} \odot {c}^{-l}$, $p_r = {c}^{-l} \odot {c}^{l+1}$ and $p_d = {c}^{l+1} \odot {c}^{-l}$,}: 
    and this is the pattern provided by Proposition~\ref{proposition.completion.border}. Assume {ad absurdum} that $X^2_G$ has the box-extension property with parameter $r \ge 1$. Take $l$ such that $l \cdot l(c) \ge r$. {By application of the box-extension property on the pattern on $\textbf{B}^2(n(2l+1)) \backslash \textbf{B}^2(n(2l+1)-1)$, we obtain an extension of it on $\textbf{B}^2(n(2l+1))$. By uniqueness of the extension on $\textbf{B}^2(n(2l+1)) \backslash \textbf{B}^2(n-1)$, this extension must coincide on this set with the pattern provided by Proposition~\ref{proposition.completion.border}. This implies that the restriction of this extension to 
    $\partial \textbf{B}^2(n)$ must be $c^4$.}
However, one can check, using the hypothesis on the cycle $c$, that there is no locally admissible extension of this pattern to $\textbf{B}^2(n(2l+1)) \backslash \textbf{B}^2(n-2)$.
\end{proof}

\begin{corollary}
    The subshift $X_K$ does not have the box-extension property.
\end{corollary}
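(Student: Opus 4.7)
The plan is to apply Proposition~\ref{prop.no.boxext} directly: it suffices to exhibit in the Kenkatabami graph $K$ a non-backtracking cycle of even length each of whose vertices is the unique common neighbor, in $K$, of its two cyclic neighbors along the cycle. Looking at Figure~\ref{figure.kenkatabami}, a natural candidate is the outer $6$-cycle
\[c \coloneqq \gamma_1\,\epsilon_1\,\gamma_3\,\epsilon_3\,\gamma_2\,\epsilon_2\,\gamma_1,\]
which has even length $6$ and is clearly non-backtracking since all six vertices are distinct.

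The core step is then the uniqueness-of-common-neighbor verification at each of the six positions. This is purely combinatorial: from the edge list one reads that the neighborhoods of the outer vertices of $K$ split cleanly, each $\gamma_i$ having its neighbors only among the hexagon vertices $\mu_*$ together with exactly two of the $\epsilon_j$'s, and each $\epsilon_i$ having neighbors only among $\{\gamma_1,\gamma_2,\gamma_3\}$ and one $\delta_j$. Consequently the intersection of the neighborhoods of any two consecutive cycle-vertices of opposite type is contained in the outer layer and, exploiting the threefold symmetry of $K$ around $\omega$, reduces to a single outer vertex, which turns out to be exactly the $c_i$ lying between them in the cycle.

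Once this check has been carried out for all six positions, the hypotheses of Proposition~\ref{prop.no.boxext} are satisfied by $G = K$ and the cycle $c$, and the conclusion that $X^2_K$ does not have the box-extension property follows immediately; combined with the fact (Propositions~\ref{proposition: adding self loops} and the discussion in Remark~\ref{remark.specification}) that $K$ has trivial even square group, this yields the desired strict separation between the box-extension property and cohomological triviality for homshifts. The only real obstacle is the mechanical adjacency bookkeeping above; no further idea is required, since the structural mechanism producing a forced, non-extendable pattern is already packaged inside Proposition~\ref{prop.no.boxext}.
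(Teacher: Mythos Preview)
Your proof is correct and follows exactly the same approach as the paper: you apply Proposition~\ref{prop.no.boxext} to the outer $6$-cycle of $K$, which is precisely the cycle $\epsilon_1\gamma_1\epsilon_2\gamma_2\epsilon_3\gamma_3\epsilon_1$ used in the paper (your $c$ is the same cycle up to starting point and orientation). Your version is in fact more detailed, since the paper merely asserts that the exterior cycle satisfies the hypothesis without spelling out the neighborhood verification; your observation that each $\gamma_i$'s neighbors lie in $\{\mu_*\}\cup\{\epsilon_*\}$ while each $\epsilon_i$'s neighbors lie in $\{\gamma_*\}\cup\{\delta_*\}$ is exactly what makes the unique-common-neighbor check immediate.
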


\begin{proof}
    The exterior cycle of $K$ \[c\coloneqq \epsilon_1 \gamma_1 \epsilon_2 \gamma_2 \epsilon_3 \gamma_3 \epsilon_1\] satisfies the condition of Proposition~\ref{prop.no.boxext}.
\end{proof}

{Let us end this section with the following question that we leave open:}

\begin{question}
    How to characterize graphs $G$ for which $X^2_G$ has the box-extension property ? 
\end{question}

\begin{question}
    Does strong irreducibility imply cohomological triviality for homshifts ?
\end{question}

\section{\label{section.non.mixing}Cohomology of non-mixing homshifts}

Throughout this article we have worked exclusively on mixing subshifts, for the simple reason that the techniques that we use do not generalize to non-mixing subshifts. As an example, we exhibit here a nontrivial cocycle on the homshift corresponding to the graph on two vertices connected by a single edge, which has trivial square group (and thus even square group).  This homshift is non-mixing, since this graph is bipartite. This is formulated as Proposition \ref{lemma.non-mixing}.

Recall that a continuous cocycle $c$ on a subshift $X$ with values in $\mathbb G$ is said to be trivial when there exists a continuous function $b : X \rightarrow \mathbb G$ and a group homomorphism $\phi : \mathbb Z^d \rightarrow \mathbb G$ such that for all $\boldsymbol n$ and $x \in X$, $c(\boldsymbol n , x) = {b(\sigma^{\boldsymbol{n}}(z))}^{-1} \phi(\boldsymbol{n}) b(z).$

There is a possibility that this condition is not satisfied for any maps $b$ and $\phi$ with values in $\mathbb G$, but is satisfied for some maps with values in a larger group $\mathbb G'$.
This situation does arise for the example that we present. We denote by $\mathbb{F}_2$ the free group with two generators, denoted by $\alpha$ and $\beta$.

\begin{proposition}\label{lemma.non-mixing}
    Let $H$ be the graph with vertex set $\{0,1\}$ and one edge between $0$ and $1$. The homshift $X^2_H$ has an $\mathbb F_2$-valued continuous cocycle which is not continuously cohomologous to a group homomorphism. However it is cohomologous to a homomorphism for a transfer function valued in a larger group $\mathbb G \supset \mathbb F_2$.
\end{proposition}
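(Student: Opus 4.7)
The plan hinges on the extremely simple structure of $X^2_H$. Since $H$ is the edge graph on $\{0,1\}$, graph homomorphisms $\Z^2 \to H$ are proper $2$-colorings, so $X^2_H = \{x_0, x_1\}$ consists of exactly the two bipartite chessboards, with $\sigma^{\boldsymbol{n}}(x_j) = x_{j + |\boldsymbol{n}|_1 \bmod 2}$. A continuous cocycle $c : \Z^2 \times X^2_H \to \mathbb G$ is therefore determined by the four values $a_j := c(\boldsymbol{e}^1, x_j)$, $b_j := c(\boldsymbol{e}^2, x_j)$, subject only to the two unit-square identities $a_1 b_0 = b_1 a_0$ (from $z = x_0$) and $a_0 b_1 = b_0 a_1$ (from $z = x_1$). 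For the proposed cocycle I would take $\mathbb F_2 = \langle \alpha, \beta \rangle$ and set $a_0 = b_0 = \alpha$, $a_1 = b_1 = 1$; both identities reduce to $\alpha = \alpha$, so this defines a valid continuous cocycle $c$.

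To show that $c$ is not continuously cohomologous to a group homomorphism inside $\mathbb F_2$, I would use an abelianization obstruction. Since $\sigma^{2\boldsymbol{e}^1}$ fixes $x_0$, the cocycle equation yields $c(2\boldsymbol{e}^1, x_0) = a_1 a_0 = \alpha$. If $c$ were cohomologous to a homomorphism $\phi : \Z^2 \to \mathbb F_2$ via a transfer function $b : X^2_H \to \mathbb F_2$, then $\alpha = b(x_0)^{-1} \phi(\boldsymbol{e}^1)^2 b(x_0)$ would be a conjugate of a square in $\mathbb F_2$. Passing to the abelianization $\mathbb F_2^{\mathrm{ab}} \cong \Z^2$, which trivializes conjugation, the class $(1,0)$ of $\alpha$ would then lie in $2\Z^2$, a contradiction.

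For the second assertion, I would adjoin a formal square root of $\alpha$ to $\mathbb F_2$ by forming the amalgamated free product $\mathbb G := \langle \mathbb F_2, s \mid s^2 = \alpha \rangle$; since both amalgamating maps $\langle \alpha \rangle \hookrightarrow \mathbb F_2$ and $\langle \alpha \rangle \hookrightarrow \langle s \rangle$ (sending $\alpha \mapsto s^2$) are injective, the standard normal form theorem for amalgamated products gives an injection $\mathbb F_2 \hookrightarrow \mathbb G$. In $\mathbb G$, define $\phi : \Z^2 \to \mathbb G$ by $\phi(\boldsymbol{e}^1) = \phi(\boldsymbol{e}^2) = s$, which is a valid homomorphism as the two images coincide and hence commute, and $b : X^2_H \to \mathbb G$ by $b(x_0) = 1$, $b(x_1) = s^{-1}$. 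Then for $i \in \{1,2\}$,
\[b(\sigma^{\boldsymbol{e}^i}(x_0))^{-1} \phi(\boldsymbol{e}^i) b(x_0) = s \cdot s \cdot 1 = s^2 = \alpha = c(\boldsymbol{e}^i, x_0),\]
\[b(\sigma^{\boldsymbol{e}^i}(x_1))^{-1} \phi(\boldsymbol{e}^i) b(x_1) = 1 \cdot s \cdot s^{-1} = 1 = c(\boldsymbol{e}^i, x_1),\]
and the equalities extend to all of $\Z^2 \times X^2_H$ via the cocycle equation.

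The main obstacle I anticipate is pinpointing the right obstruction to triviality inside $\mathbb F_2$: the abelianization map is the clean invariant that detects that $\alpha$ is not a conjugate of a square, and the explicit amalgamated-product construction then neutralizes this obstruction in a minimal, controlled way while preserving the rest of the free-group structure.
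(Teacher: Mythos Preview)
Your proof is correct, but it differs from the paper's in several interesting ways, all stemming from your choice of cocycle. The paper takes $c(\boldsymbol{e}^i,x_0)=\alpha$ and $c(\boldsymbol{e}^i,x_1)=\beta$, genuinely using both free generators; you take $c(\boldsymbol{e}^i,x_0)=\alpha$ and $c(\boldsymbol{e}^i,x_1)=1$, so your cocycle actually factors through $\Z\hookrightarrow\mathbb F_2$. For the obstruction, the paper derives from cohomology to a homomorphism the equation $g\alpha g=\beta$ for some $g\in\mathbb F_2$ and kills it with the homomorphism $\mathbb F_2\to\Z$, $\alpha\mapsto 0$, $\beta\mapsto 1$; you instead observe that $c(2\boldsymbol{e}^1,x_0)=\alpha$ would have to be conjugate to a square and kill this by abelianization. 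For the larger group, the paper builds the one-relator group $\langle\alpha,\beta,w\mid w\alpha w=\beta\rangle$ and invokes the Freiheitssatz to see that $\mathbb F_2$ embeds; you instead adjoin a square root of $\alpha$ via the amalgamated product $\mathbb F_2 *_{\langle\alpha\rangle}\langle s\rangle$, where the embedding of $\mathbb F_2$ follows from the normal form theorem. Your route is slightly more elementary and makes the parity obstruction completely transparent; the paper's example has the minor aesthetic advantage that the cocycle visibly uses the full target group $\mathbb F_2$ rather than a cyclic subgroup.
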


\begin{proof}
   
    The subshift $X_H$ contains exactly two elements $x$ and $y$, such that $x_{\boldsymbol{0}}=0$ and $y_{\boldsymbol{0}}=1$. 
    Let us consider the continuous cocycle $c : \mathbb{Z}^2 \times X_H \rightarrow \mathbb{F}_2$ which is defined by $c(\boldsymbol{e}^1,x) = c(\boldsymbol{e}^2,x) = \alpha$ and $c(\boldsymbol{e}^1,y) = c(\boldsymbol{e}^2,y) = \beta$. 
    
    The cocycle $c$ cannot be cohomologous to a group homomorphism. Indeed, let us assume ad absurdum that there exists a group homomorphism $c'$ and $b : X_H \rightarrow \mathbb{F}_2$ such that for all $\textbf{n}$ and for all $z$,
    \[{c(\boldsymbol{n},z) ={b(\sigma^{\boldsymbol{n}}(z))}^{-1} c'(\boldsymbol{n}) b(z).}\]
    In particular,
    \[
    c(\boldsymbol{e}^1,x) ={b(\sigma^{\boldsymbol{e}^1}(x))}^{-1} c'(\boldsymbol{e}^1) b(x),
    \]
    \[
    c(\boldsymbol{e}^1,y) ={b(\sigma^{\boldsymbol{e}^1}(y))}^{-1} c'(\boldsymbol{e}^1) b(y).
    \]
    Since 
    $\sigma^{\boldsymbol{e}^1}(x) = y$ and $\sigma^{\boldsymbol{e}^1}(y) = x$, these two equations yield
    \[
    b(x)^{-1}b(y) c(\boldsymbol{e}^1,x)b(x)^{-1}b(y)  = c(\boldsymbol{e}^1,y) 
    \]

Consider the morphism $\eta:\mathbb F_2\to \Z$ defined by $\alpha \mapsto 0$, $\beta \mapsto 1$. Applying $\eta$ to the previous equation yields $2\eta(b(x)^{-1} b(y)) = 1$, which is clearly impossible.
    
    On the other hand, consider the group $\mathbb G= \langle \alpha,\beta,w : w\alpha w=\beta \rangle$. Notice that $\mathbb G$ is a free group modulo a single relation. One can easily deduce that the map $\iota : \mathbb F_2 \rightarrow \mathbb G$ such that $\iota(\alpha) = \alpha$ and $\iota(\beta) = \beta$ is an embedding. See \cite[Chapter II, Proposition 5.1]{Lyndon}.

    Define a function $b$ by setting $b(x)=w^{-1}$ and $b(y)=1_{\mathbb G}$ and a group homomorphism $h : \mathbb Z^2\to \mathbb G$ by
    \[h(\boldsymbol{e}^1)=h(\boldsymbol{e}^2)=aw\] 
    and the cocycle $c'$ given by
    $c'(\boldsymbol n, \cdot)= h(\boldsymbol{n})$. A simple computation provides that $c$ is cohomologous to $c'$ for the transfer function $b$.
\end{proof}

\begin{remark}
    When the even square group of $G$ is not trivial, the square group cocycles of $X^d_G$ are not trivial, even when allowing a transfer function valued in a larger group. The proof is essentially the same as the one of Proposition \ref{proposition.non.trivial.sqgrp.cocycle}.
\end{remark}

\printindex

\bibliographystyle{amsalpha}
\bibliography{biblio.bib}

\providecommand{\bysame}{\leavevmode\hbox to3em{\hrulefill}\thinspace}
\providecommand{\MR}{\relax\ifhmode\unskip\space\fi MR }
% \MRhref is called by the amsart/book/proc definition of \MR.
\providecommand{\MRhref}[2]{%
  \href{http://www.ams.org/mathscinet-getitem?mr=#1}{#2}
}
\providecommand{\href}[2]{#2}
\begin{thebibliography}{CGHdMO25}

\bibitem[Art91]{zbMATH00425998}
Michael Artin, \emph{Algebra}, Englewood Cliffs, NJ: Prentice-Hall, 1991.

\bibitem[Ber66]{MR216954}
Robert Berger, \emph{The undecidability of the domino problem}, Mem. Amer.
  Math. Soc. \textbf{66} (1966), 72. \MR{216954}

\bibitem[BS94]{MR1279469}
Robert Burton and Jeffrey~E. Steif, \emph{Non-uniqueness of measures of maximal
  entropy for subshifts of finite type}, Ergodic Theory Dynam. Systems
  \textbf{14} (1994), no.~2, 213--235. \MR{1279469}

\bibitem[CGHdMO25]{CGHO25}
Nishant Chandgotia, Silvere Gangloff, Benjamin Hellouin~de Menibus, and Piotr
  Oprocha, \emph{Undecidability of the block gluing classes of homshifts},
  2025, arXiv preprint arXiv:2507.21342.

\bibitem[CJ17]{zbMATH06774915}
Nhan-Phu Chung and Yongle Jiang, \emph{Continuous cocycle superrigidity for
  shifts and groups with one end}, Math. Ann. \textbf{368} (2017), no.~3-4,
  1109--1132.

\bibitem[CL90]{MR1041445}
John~H. Conway and Jeffrey~C. Lagarias, \emph{Tiling with polyominoes and
  combinatorial group theory}, J. Combin. Theory Ser. A \textbf{53} (1990),
  no.~2, 183--208. \MR{1041445}

\bibitem[CM18]{MR3743365}
Nishant Chandgotia and Brian Marcus, \emph{Mixing properties for hom-shifts and
  the distance between walks on associated graphs}, Pacific J. Math.
  \textbf{294} (2018), no.~1, 41--69. \MR{3743365}

\bibitem[CM21]{zbMATH07433654}
Nishant Chandgotia and Tom Meyerovitch, \emph{Borel subsystems and ergodic
  universality for compact {{\(\mathbb{Z}^d\)}}-systems via specification and
  beyond}, Proc. Lond. Math. Soc. (3) \textbf{123} (2021), no.~3, 231--312.

\bibitem[CPT19]{chandgotia2019kirszbraun}
Nishant Chandgotia, Igor Pak, and Martin Tassy, \emph{Kirszbraun-type theorems
  for graphs}, Journal of Combinatorial Theory, Series B \textbf{137} (2019),
  10--24.

\bibitem[CV25]{carrasco2025rice}
Nicanor Carrasco-Vargas, \emph{On a {R}ice theorem for dynamical properties of
  {SFT}s on groups}, Archiv der Mathematik \textbf{124} (2025), no.~6,
  591--603.

\bibitem[Ein01]{MR1836431}
Manfred Einsiedler, \emph{Fundamental cocycles of tiling spaces}, Ergodic
  Theory Dynam. Systems \textbf{21} (2001), no.~3, 777--800. \MR{1836431}

\bibitem[Fri97]{MR1428636}
Shmuel Friedland, \emph{On the entropy of {$\mathbb{Z}^d$} subshifts of finite
  type}, Linear Algebra Appl. \textbf{252} (1997), 199--220. \MR{1428636}

\bibitem[GHdMO22]{gangloff2022short}
Silv{\`e}re Gangloff, Benjamin Hellouin~de Menibus, and Piotr Oprocha,
  \emph{Short-range and long-range order: a transition in block-gluing behavior
  in {H}om shifts}, 2022, arXiv preprint arXiv:2211.04075. To appear in Journal
  d'Analyse Mathématique.

\bibitem[GJKS25]{gao2018continuous}
Su~Gao, Steve Jackson, Edward Krohne, and Brandon Seward, \emph{Continuous
  combinatorics of abelian group actions}, Memoirs of the American Mathematical
  Society, no. 1573, American Mathematical Society, 2025.

\bibitem[GP95]{zbMATH00838593}
William Geller and James Propp, \emph{The projective fundamental group of a
  {{\(\mathbb{Z}^ 2\)}}-shift}, Ergodic Theory Dyn. Syst. \textbf{15} (1995),
  no.~6, 1091--1118.

\bibitem[HM10]{MR2680402}
Michael Hochman and Tom Meyerovitch, \emph{A characterization of the entropies
  of multidimensional shifts of finite type}, Ann. of Math. (2) \textbf{171}
  (2010), no.~3, 2011--2038. \MR{2680402}

\bibitem[KM99]{zbMATH01415732}
Anders Karlsson and Gregory~A. Margulis, \emph{A multiplicative ergodic theorem
  and nonpositively curved spaces}, Commun. Math. Phys. \textbf{208} (1999),
  no.~1, 107--123.

\bibitem[KS94]{MR1307298}
Anatole Katok and Ralf~J. Spatzier, \emph{First cohomology of {A}nosov actions
  of higher rank abelian groups and applications to rigidity}, Inst. Hautes
  \'Etudes Sci. Publ. Math. (1994), no.~79, 131--156. \MR{1307298}

\bibitem[LM21]{MR4412543}
Douglas Lind and Brian Marcus, \emph{An introduction to symbolic dynamics and
  coding}, second ed., Cambridge Mathematical Library, Cambridge University
  Press, Cambridge, 2021. \MR{4412543}

\bibitem[LS01]{Lyndon}
Roger~C. Lyndon and Paul~E. Schupp, \emph{Combinatorial group theory}, vol.~89,
  Springer, 2001.

\bibitem[LT24]{zbMATH07803200}
Piet Lammers and Martin Tassy, \emph{Macroscopic behavior of {Lipschitz} random
  surfaces}, Probab. Math. Phys. \textbf{5} (2024), no.~1, 177--267.

\bibitem[MT20]{menz2020variational}
Georg Menz and Martin Tassy, \emph{A variational principle for a non-integrable
  model}, Probability Theory and Related Fields \textbf{177} (2020), no.~3,
  747--822.

\bibitem[PS15]{MR3314811}
Ronnie Pavlov and Michael Schraudner, \emph{Classification of sofic projective
  subdynamics of multidimensional shifts of finite type}, Trans. Amer. Math.
  Soc. \textbf{367} (2015), no.~5, 3371--3421. \MR{3314811}

\bibitem[Sad15]{zbMATH06767696}
Lorenzo Sadun, \emph{Cohomology of hierarchical tilings}, Mathematics of
  aperiodic order, Basel: Birkh{\"a}user/Springer, 2015, pp.~73--104.

\bibitem[Sal24]{salomon2024iotandecidabilite}
L{\'e}o~Paviet Salomon, \emph{Ind{\'e}cidabilit{\'e} des invariants
  g{\'e}om{\'e}triques dans les pavages}, Ph.D. thesis, Normandie
  Universit{\'e}, 2024.

\bibitem[Sch95]{Schmidt95}
Klaus Schmidt, \emph{The cohomology of higher-dimensional shifts of finite
  type}, Pacific Journal of Mathematics \textbf{170} (1995), no.~1, 237--269.
  \MR{1359979}

\bibitem[Tas14]{tassy2014tiling}
Martin Tassy, \emph{Tiling by bars}, Ph.D. thesis, Brown University, 2014.

\bibitem[Thu90]{MR1072815}
William~P. Thurston, \emph{Conway's tiling groups}, Amer. Math. Monthly
  \textbf{97} (1990), no.~8, 757--773. \MR{1072815}

\end{thebibliography}

\end{document}